\newtheorem{theorem}{Theorem}[section]
\newtheorem{remark}[theorem]{Remark}
\newtheorem{definition}[theorem]{Definition}
\newtheorem{lemma}[theorem]{Lemma}
\newtheorem{proposition}[theorem]{Proposition}
\newtheorem{corollary}[theorem]{Corollary}
\newtheorem{example}[theorem]{Example}
\newcommand\Q{{\mathbb{Q}}}
\def\Sym{\mathop{\rm Sym}\nolimits}
\def\cO{{\mathcal O}}
\def\cF{{\mathcal F}}
\def\cL{{\mathcal L}}
\def\cC{{\mathcal C}}
\let\wh=\widehat
\begin{document}
\title[]{Specialness and Isotriviality for Regular Algebraic Foliations}

\author{Ekaterina Amerik}
\address{Universit\'e Paris-Sud \\
Laboratoire de Math\'ematiques d'Orsay \\
91405 Orsay, France, and \\
National Research University Higher School of Economics\\
Laboratory of Algebraic Geometry and its Applications\\
Usacheva 6, 119048 Moscow, Russia}

\email{ekaterina.amerik@math.u-psud.fr}

\author{Fr\'ed\'eric Campana}
\address{Universit\'e Lorraine \\
 Institut Elie Cartan\\
Nancy \\ Institut Universitaire de France\\ and KIAS scholar,\ KIAS\\
85 Hoegiro, Dongdaemun-gu\\
Seoul 130-722, South Korea,}

\email{frederic.campana@univ-lorraine.fr}

\date{\today}

\maketitle

\ \ \ \ \ \ \ \ \ \ \ \ \ \ \ \ \ \ \ \ \ \ \ \ \ \ \ \ {\it A Jean-Pierre Demailly}

\tableofcontents


\begin{abstract} We show that an everywhere regular foliation $\cF$ with compact canonically polarized leaves on a quasi-projective manifold $X$ has isotrivial family of leaves when the orbifold base of this family is special. By \cite{BPW17}, the same proof 
works in the case where the leaves have trivial canonical bundle. The specialness condition means that the $p$-th exterior power of the logarithmic extension of its conormal bundle does not contain any rank-one subsheaf of maximal Kodaira dimension $p$, for any $p>0$. This condition is satisfied, for example, in the very particular case when the Kodaira dimension of the determinant of the Logarithmic extension of the conormal bundle vanishes. Motivating examples are given by the `algebraically coisotropic' submanifolds of irreducible hyperk\"ahler projective manifolds.
\end{abstract}


\section{Introduction}

Smooth algebraic families of canonically polarized manifolds 
over a smooth quasiprojective base have been intensively studied in recent years, starting from the work by 
Viehweg and Zuo \cite{V-Z}. 
Their main result states that if $f: X\to B$ is such a family and $\overline{B}$ a smooth compactification such that the complement $S$ of $B$
in  $\overline{B}$ is a normal crossing divisor, then some symmetric power of the Log-cotangent bundle of $B$ has an invertible subsheaf whose Kodaira dimension is at least the number of moduli $Var(f)$ of the family. Viehweg conjectured that the base of a family of maximal variation ($Var(f)=dim(B)$) must be of log-general type. This
conjecture is established in \cite{CP13} (but see \cite{CP15} for a simpler argument). 

A more general conjecture was stated in \cite{Ca07}, asserting that the family is isotrivial (that is, $Var(f)=0$) if $B$ is {\it special}, which roughly means that $B$ does not admit a map onto a positive-dimensional ``orbifold'' of general type. We do not recall the precise 
definition of a special quasi-projective manifold in this introduction, and just mention that $(\overline{B}, S)$ is special if its log-Kodaira dimension is zero. This isotriviality conjecture implies that the moduli map factors through the ``core map'' (see \cite{Ca07}), and so the variation can be maximal only if the core map is the identity map on $B$, which is then of Log-general type.

The isotriviality conjecture has been proved by Jabbusch and 
Kebekus in dimensions two and three (\cite{JK}, \cite{JK'}). B.Taji (\cite{T}) proved it in general, using \cite{CP13}.
A simplified version of Taji's proof, based on \cite{CP15}, can be found in \cite{Cl}.

We consider here, more generally, the case when the family $f: X\to B$ is not smooth but only 
quasi-smooth, that is, has only multiple fibers with smooth reduction as singularities; $B$ may then acquire quotient singularities. 

Such is the case when there is a smooth foliation ${\cF}$ on 
$X$ such that its leaves are fibers of $f$. The base $B$ then carries a natural orbifold structure coming from the multiple fibers
and one can ask whether the specialness of the orbifold 
base again implies the isotriviality of the family. The definition of the specialness of the orbifold base in this (mildly) singular context is part of the problem.

In this paper we give two equivalent definitions of the specialness of the orbifold base: first as a property of the relative
cotangent of the foliation in \S4, and then via multiple fibres of fibrations (in the spirit of \cite{Ca07}) in \S9. 
Using Viehweg-Zuo sheaves and \cite{CP15}, we prove that if $X$ is a 
connected quasi-projective complex manifold with an everywhere regular 
foliation $\cF$ with compact leaves which are canonically polarised, 
then the family of its leaves is isotrivial provided that its orbifold base is special. The first step of the proof, in \S8, is a `tautological' base change which produces a family with non-mutiple smooth fibres. 

The recent work \cite{BPW17}, Theorem 9.9, produces a Viehweg-Zuo sheaf on the base of a smooth family of polarized projective manifolds with 
trivial canonical bundle. Therefore our result remains true, with the same proof, when the fibres of $f$ have trivial (instead of ample) canonical bundle (see also \cite{CP15}, Theorem 8.2 which establishes the analogue of Viehweg conjecture in this case).

This isotriviality statement should hold for more general fibres, probably when their canonical 
bundle is semi-ample (cf. \cite{V-Z}) , or even pseudo-effective, as the work of Popa and Schnell \cite{PS} seems to indicate.

It would also be interesting to extend our result to the case when $X$ is a
quasi-K\"ahler complex manifold, that is, complement of a proper subvariety in a K\"ahler manifold.

It is a great pleasure for us to dedicate this paper to Jean-Pierre Demailly. The methods he has developed are
important for some of our main references, such as \cite{BPW17}, and his theorem \cite{D} provides a potential source of 
examples or counterexamples.


\section{Regular Algebraic Foliations. Compactification.}

Let $X$ be a connected complex manifold of complex dimension $n$, and  $\cF$ an \emph{everywhere regular} holomorphic foliation on $X$, of rank $r, 0<r<n.$ The foliation $\cF$ is called {\it algebraic} if all of its leaves are compact. 

In this paper $X$ is always assumed quasi-K\"ahler, that is, a non-empty Zariski open subset of a compact K\"ahler manifold $ \overline{X}$, and $\cF$ is assumed algebraic. 
In this case, using the compactness of the components of the Chow-Barlet space of analytic cycles on 
$\overline{X}$, we obtain a proper and connected holomorphic fibration $f:X\to B$ onto an irreducible normal complex space $B$ of dimension $n-r$ whose reduced fibres $F_b,\ b\in B,$ are exactly the leaves of $f$. Conversely, any such fibration $f:X\to B$ defines an algebraic (everywhere regular) foliation $\cF$ which is the 
saturation of the kernel of $df$ in the tangent bundle $T_X$. The order of the holonomy group 
along $F_b,\ b\in B,$ is also the multiplicity of $F_b$ as a scheme-theoretic fibre of $f$. This fibration is ``orbi-smooth'' in the sense that all of its scheme-theoretic fibres have smooth reduced support, and $B$ has
quotient singularities (see \cite{AC} for details). We choose a smooth compactification $(\overline{X},\overline{D})$ 
such that $\overline{D}=\overline{X}-X$ is a simple normal crossing divisor, in such a way that that this fibration extends to a holomorphic fibration $ \overline{f}: \overline{X}\to  \overline{B}$ with $ \overline{B}$ normal, and such that $ \overline{D}= \overline{f}^{-1}(E),$ where $E= \overline{B}-B$ is a divisor on $\overline{B}$.

Our aim is to give criteria under which an algebraic foliation is isotrivial, that is, all of its generic leaves are isomorphic. Our main result is Theorem \ref{tisot} below. We assume that $X$ is quasiprojective and that the leaves of $\cF$ are canonically polarized. In fact the same argument applies to the case when the canonical bundle of the leaves of $\cF$ is trivial (remark \ref{CY}). The criterion we give is expressed in terms of specialness  (see \S\ref{sspec}) of the Log-conormal sheaf of $\cF$, which we define in the next section. This property will be shown to be equivalent in \S\ref{sorbgeom} to another, more geometric property: the specialness of the orbifold base $(B, D_B)$ of the fibration $f$, defined in \S 9.

\section{The Log-conormal sheaf of $\cF$.}

Let $\cF$ be an everywhere regular foliation on the connected  quasi-K\"ahler manifold $X$. Let $ \overline{X},B, \overline{B}, f, \overline{f}$ be as above.

Define the rank $r$ subbundle $\Omega^1_{X/\cF}\subset \Omega^1_X$ as the kernel of the quotient map $\Omega^1_X\to \cF^*:=\Omega^1_{\cF}$. This bundle is called the conormal bundle of $\cF$. It is also the saturation inside $\Omega^1_X$ of $f^*(\Omega^1_{B^{reg}})$ (where $B^{reg}$ denotes the smooth part of $B$).. 

On the compactification $\overline{X}$, we define an extension $\Omega^1_{ \overline{X}/ \overline{\cF}}$ as $( \overline{f}^*(\Omega^1_{ \overline{B}^{reg}}))^{sat}$. Here the saturation is taken in the logarithmic 
cotangent bundle $\Omega^1_{\overline{X}}(Log(\overline{D}))$. In general, extending sheaves to the
compactification, we shall systematically consider their saturations in a suitable ``large'' locally free 
sheaf. The reason is that a saturated subsheaf of a locally free (or, more generally, reflexive) sheaf is normal (see for example \cite{OSS}, Lemma 1.1.16), so that a standard argument
involving a version of Hartogs' lemma applies to prove the birational invariance of certain spaces of sections.

So for any $m\geq 0$, we define $(\otimes^m\Omega^1_{ \overline{X}/ \overline{\cF}})^{sat}$ as the saturation of $\otimes^m\Omega^1_{ \overline{X}/ \overline{\cF}}$ inside $\otimes^m(\Omega^1_{\overline{X}}(Log(\overline{D})))$, and similarly for $Sym^m(\Omega^1_{ \overline{X}/ \overline{\cF}})^{sat}$.

To avoid too many heavy notations, we define ${\Omega^p_{ \overline{X}/ \overline{\cF}}}$ as being already 
saturated: 
$\Omega^p_{ \overline{X}/ \overline{\cF}}:=(\wedge^p(\Omega^1_{ \overline{X}/ \overline{\cF}}))^{sat},\forall p\geq 0$, where the saturation takes place in the locally free sheaf of logarithmic $p$-forms. By Hartogs' lemma,
the space of sections of ${\Omega^p_{ \overline{X}/ \overline{\cF}}}$ does not depend on the choice of the 
compactification.

\begin{definition}\label{conorm} For a non-singular algebraic foliation $\cF$ on a quasi-K\"ahler $X$ together with a suitable K\"ahler compactification $ \overline{f}: \overline{X}\to  \overline{B}$, 
$\Omega^1_{ \overline{X}/ \overline{\cF}}$ is called the Log-conormal sheaf of $\cF$.\end{definition}

The properties of the conormal sheaf we are interested in will be likewise independent on the chosen compactifications.

\medskip

Let now $g:B\dasharrow Y$ be a dominant rational map, extended to a rational map $ \overline{g}: \overline{B}\dasharrow  \overline{Y}$ on compactifications (one may suppose $\overline{Y}$ smooth though $B$ usually has 
some singularities).  Let $h=g\circ f,\ \overline{h} = \overline{g}\circ  \overline{f}: \overline{X}\to \overline{Y}$.  Set $dim(Y)=p$, $0\leq p\leq  dim(B)=n-r$.

\medskip

The map $h$ induces a natural inclusion $h^*(K_Y)\subset \Omega^p_{X/\cF}:=\wedge^p\Omega^1_{X/\cF}$, as well as extensions $ \overline{h}^*:\otimes^m\Omega^1_{ \overline{Y}}\subset \otimes^m\Omega^1_{ \overline{X}/ \overline{\cF}},\forall m\geq 0$.

We consider the saturated inverse images by $\overline{h}^*$ of pluridifferentials on $\overline{Y}$: 
$(\overline{h}^*(\otimes^m\Omega^1_{ \overline{Y}}))^{sat}\subset (\otimes^m\Omega^1_{ \overline{X}/ \overline{\cF}})^{sat}$, and analogously for the sheaf of symmetric differentials and $\Omega^p_{\overline{Y}}$.

The Hartogs' lemma gives the following:

\medskip

\begin{lemma}\label{birinv} For any $g:B\dasharrow Y$, and any $m\geq 0$, $h^0(\overline{X}, (\overline{h}^*(\otimes^m\Omega^1_{ \overline{Y}}))^{sat})$ does not depend on the choices of $\overline{X}, \overline{D}, \overline{B}$. The same property holds for $h^0(\overline{X}, (\overline{h}^*(Sym^m\Omega^1_{ \overline{Y}}))^{sat})$ and $h^0(\overline{X},(\overline{h}^*\Omega^p_{ \overline{Y}})^{sat}), \forall p>0.$
\end{lemma}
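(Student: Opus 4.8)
The plan is to reduce the statement to a single instance of the Hartogs-type extension principle that the authors have already announced: a saturated subsheaf of a locally free (hence reflexive) sheaf is normal, so its sections extend across subsets of codimension $\geq 2$. Concretely, I would first fix two compactifications $(\overline{X}_1,\overline{D}_1,\overline{B}_1)$ and $(\overline{X}_2,\overline{D}_2,\overline{B}_2)$ of the data, together with the induced extensions $\overline{h}_i:\overline{X}_i\dasharrow\overline{Y}_i$. By resolving the graph of the birational map $\overline{X}_1\dasharrow\overline{X}_2$ (and, if necessary, of $\overline{Y}_1\dasharrow\overline{Y}_2$), one obtains a common smooth compactification $\overline{X}'$ dominating both, with simple normal crossing boundary $\overline{D}'$, and a fibration $\overline{f}':\overline{X}'\to\overline{B}'$ extending $f$; it therefore suffices to prove that the space of sections is unchanged under a \emph{dominant} birational morphism $\pi:\overline{X}'\to\overline{X}_1$ (and likewise on the $Y$-side), since then both $\overline{X}_1$ and $\overline{X}_2$ give the same answer as $\overline{X}'$.

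For such a morphism $\pi$, sections over $\overline{X}_1$ pull back to sections over $\overline{X}'$ because $\overline{h}'^*(\otimes^m\Omega^1_{\overline{Y}'})$ maps into $\pi^*$ of the corresponding sheaf downstairs, and the saturation only enlarges the sheaf; so restriction gives an injection $H^0(\overline{X}_1,(\overline{h}_1^*(\otimes^m\Omega^1_{\overline{Y}_1}))^{sat})\hookrightarrow H^0(\overline{X}',(\overline{h}'^*(\otimes^m\Omega^1_{\overline{Y}'}))^{sat})$. For the reverse inclusion, let $U_1\subset\overline{X}_1$ be the largest open set over which $\pi$ is an isomorphism; its complement has codimension $\geq 2$ since $\pi$ is birational and $\overline{X}_1$ is smooth. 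A section $s$ over $\overline{X}'$ restricts to a section of the relevant saturated sheaf over $\pi^{-1}(U_1)\cong U_1$, and this sheaf agrees on $U_1$ with $(\overline{h}_1^*(\otimes^m\Omega^1_{\overline{Y}_1}))^{sat}$ because the two ambient logarithmic tensor bundles, and the maps $\overline{h}_i$, agree there. The sheaf $(\overline{h}_1^*(\otimes^m\Omega^1_{\overline{Y}_1}))^{sat}$ is saturated inside the locally free sheaf $\otimes^m\Omega^1_{\overline{X}_1}(\mathrm{Log}(\overline{D}_1))$, hence normal, hence its sections extend uniquely from $U_1$ across the codimension $\geq 2$ complement; this produces the section of $(\overline{h}_1^*(\otimes^m\Omega^1_{\overline{Y}_1}))^{sat})$ over all of $\overline{X}_1$ that restricts to $s$. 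The arguments for $Sym^m\Omega^1_{\overline{Y}}$ and for $\Omega^p_{\overline{Y}}$ are identical, replacing $\otimes^m$ by the symmetric power or the $p$-th exterior power throughout, and using that $\Omega^p_{\overline{X}/\overline{\cF}}$ was \emph{defined} to be already saturated.

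The only genuinely delicate point is the claim that the two saturated sheaves literally coincide on the locus $U_1$ where the compactifications are identified — one must check that ``taking the logarithmic cotangent bundle of a normal crossing pair'' and ``saturating the pullback of differentials from $\overline{Y}$'' are operations that are local on $\overline{X}$ and hence compatible with the identification of open sets, and that the divisors $\overline{D}_i$ restrict to the same divisor on $U_1$ (which holds because $\overline{D}_i=\overline{f}_i^{-1}(E_i)$ with $E_i=\overline{B}_i-B$ a fixed divisor on the fixed open part $B$). Granting this, everything else is the standard Hartogs/normality mechanism, and I expect no further obstacle; the resolution-of-graph step is routine by Hironaka, and the rest is formal.
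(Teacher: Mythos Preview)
Your proposal is correct and is exactly the argument the paper has in mind: the authors do not spell out a proof at all, merely writing ``The Hartogs' lemma gives the following'' after recalling that saturated subsheaves of locally free sheaves are normal. Your write-up is the standard unpacking of that sentence---reduce to a dominating model, pull back in one direction, and use normality plus codimension $\geq 2$ of the indeterminacy locus to descend in the other---so there is no difference in approach to discuss.
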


\begin{definition} Let $\overline{X}, \overline{D}$ be as above, and $\cL\subset \otimes^m \Omega^1_{\overline{X}}(Log( \overline{D}))$ be a rank-one coherent subsheaf. 

Define: $\kappa^{sat}(\overline{X},\cL):=limsup_{k\to +\infty} \frac{Log(h^0(\overline{X},\cL^{\otimes k,sat}))}{Log\ k}$, the  saturation $\cL^{\otimes k,sat}$ of $\cL^{\otimes k}$ being taken in $\otimes^{mk}\Omega^1_{\overline{X}}(Log(\overline{D}))$.
\end{definition}

By the same principle as in \ref{birinv}, we see that $\kappa^{sat}(\overline{X},\cL)$ is independent from the birational model $(\overline{X}, \overline{D})$ chosen; more precisely, $\kappa^{sat}(\overline{X},\cL)$
is equal to the $\kappa^{sat}$ of the direct or inverse image of $\cL$ on a modification of  
$(\overline{X}, \overline{D})$. 

It therefore makes sense to consider the restriction of $\cL$ to $X$
and talk of $\kappa^{sat}(X,\cL)$.

\medskip

We shall also need the following elementary lemma.

\begin{lemma}\label{factor} Let $\overline{h}:\overline{X}\dasharrow \overline{Y}$ be a meromorphic, dominant, and connected fibration with $p=dim(\overline{Y})>0$. Then $\overline{h}^*(K_{\overline{Y}})\subset \Omega^p_{ \overline{X}/ \overline{\cF}}$ (as subsheaves of $\Omega^p(Log D)$) if and only if $\overline{h}$ factors through $\overline{f}$ (ie: if there exists $\overline{g}:\overline{B}\dasharrow \overline{Y}$ such that $\overline{h}=\overline{g}\circ \overline{f}$). \end{lemma}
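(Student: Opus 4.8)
The plan is to prove both implications by a local computation near a general point, using the fact that $\Omega^1_{\overline X/\overline{\cF}}$ is by definition the saturation of $\overline f^*\Omega^1_{\overline B^{reg}}$ inside the logarithmic cotangent bundle. The ``if'' direction is essentially the observation already recorded in \S3 that any map $h = g\circ f$ induces an inclusion $h^*(K_Y)\subset \Omega^p_{X/\cF}$, extended to the logarithmic setting via $\overline h^* = \overline g^*\circ \overline f^*$: since $\overline h^* \Omega^1_{\overline Y} = \overline f^*(\overline g^*\Omega^1_{\overline Y}) \subset \overline f^*\Omega^1_{\overline B^{reg}}$ over the locus where everything is defined and smooth, taking $p$-th exterior powers and then saturating gives $\overline h^*(K_{\overline Y}) = (\overline h^*\wedge^p\Omega^1_{\overline Y})^{sat} \subset (\wedge^p\Omega^1_{\overline X/\overline{\cF}})^{sat} = \Omega^p_{\overline X/\overline{\cF}}$ as subsheaves of $\Omega^p(\mathrm{Log}\,\overline D)$. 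All these inclusions need only be checked on a Zariski-dense open set because a saturated subsheaf of a locally free sheaf is normal (as recalled in the excerpt), so an inclusion of rank-one saturated subsheaves that holds in codimension one holds everywhere.

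For the ``only if'' direction, assume $\overline h^*(K_{\overline Y}) \subset \Omega^p_{\overline X/\overline{\cF}}$. First I would pass to a general point $x\in X$ lying over a general point $b = f(x)$ where $f$ is a smooth fibration and $B$ is smooth, and restrict attention to a Euclidean neighbourhood on which $f$ looks like a projection $\Delta^{n-r}\times F \to \Delta^{n-r}$ with the leaf $F$ as fibre. Here $\Omega^1_{X/\cF}$ is freely generated by the pullbacks $f^*(dt_1),\dots,f^*(dt_{n-r})$ of coordinates on the base, so $\Omega^p_{X/\cF}$ is generated by the $f^*$ of $p$-fold wedges of the $dt_i$, i.e.\ $\Omega^p_{X/\cF} = f^*(\Omega^p_{B^{reg}})$ locally. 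Now a local generator $\omega$ of $\overline h^*(K_{\overline Y}) = h^*(K_Y)$ is, near a general point, the $h$-pullback of a generator of $K_Y$, which is a nowhere-zero decomposable $p$-form $dy_1\wedge\cdots\wedge dy_p$ on $Y$; the hypothesis says $h^*(dy_1\wedge\cdots\wedge dy_p)$ lies in $f^*(\Omega^p_{B^{reg}})$, hence each $h^*(dy_j) = \sum_i a_{ij}\, f^*(dt_i)$ for functions $a_{ij}$ on $X$ — or, more cleanly, $h^*(dy_j)$ is a section of $\Omega^1_{X/\cF}$ for each $j$, because a decomposable form $\alpha_1\wedge\cdots\wedge\alpha_p$ that lies in a rank-$(n-r)$ subbundle $W\subset\Omega^1_X$ with $p\le n-r$ and is nowhere zero forces each $\alpha_j\in W$ (this is a pointwise linear-algebra fact about decomposable covectors).

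The heart of the argument — and the step I expect to be the main obstacle to write carefully — is then to deduce from ``$h^*(dy_j)\in \Omega^1_{X/\cF}$ for $j=1,\dots,p$'' that $h$ factors through $f$. Since $\Omega^1_{X/\cF}$ is the conormal bundle of $\cF$, the condition $h^*(\Omega^1_Y)\subset\Omega^1_{X/\cF}$ says exactly that $dh$ annihilates the tangent distribution $\cF = (\Omega^1_{X/\cF})^\perp = \ker(df)$ (over the locus where $f$ is smooth), i.e.\ $h$ is constant along the (generic) leaves of $\cF$, which are the fibres of $f$. Because $f$ is a proper fibration with connected fibres onto the normal base $B$, a meromorphic map $h$ that is generically constant on the fibres of $f$ descends to a meromorphic map $\overline g\colon \overline B\dashrightarrow \overline Y$ with $\overline h = \overline g\circ\overline f$: concretely, $\overline g$ is defined on the dense open set over which $h$ is regular and constant on fibres by $\overline g(b) := h(\overline f^{-1}(b))$, and it extends meromorphically to $\overline B$ by properness and normality. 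I would be careful here about two points: that ``decomposable $p$-form in a subbundle implies each factor in the subbundle'' genuinely needs $p\le\mathrm{rank}$ and nowhere-vanishing of the form (true for a generator of $K_{\overline Y}$ at a general point), and that the passage from ``annihilates $\ker df$ generically'' to ``factors through $f$ as meromorphic maps'' uses properness/connectedness of $f$ plus normality of $\overline B$ — both hypotheses are in force by the standing assumptions on the compactification in \S2. Finally, having produced $\overline h = \overline g\circ\overline f$, the implication is complete, and one notes the two directions are exact converses, which gives the ``if and only if.''
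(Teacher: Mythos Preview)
Your proposal is correct and follows essentially the same approach as the paper's own proof, which is a two-line sketch: the equivalence is ``classical'' on the Zariski-open locus where $h$ is defined and submersive, and the inclusion extends to the compactification because $\Omega^p_{\overline X/\overline{\cF}}$ is saturated. You have simply spelled out what the paper leaves implicit, namely the pointwise linear-algebra step (a nonzero decomposable $p$-covector lying in $\wedge^p W$ has all its factors in $W$) and the descent of $h$ through $f$ using connectedness of fibres and normality of $\overline B$.
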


{\it Proof}: In restriction to the open part of $X$ where $h$ is defined and submersive this is classical, and the statement 
over the compactification follows from the fact that $\Omega^p_{\overline{X}/\overline{\cF}}$ is saturated, 
so that the inclusion over the open part implies the inclusion.


\section{Specialness}\label{sspec}

\begin{definition}\label{dspec} We say that the orbifold base of $f$ is special if, for every connected dominant rational map $g:B\dasharrow Y$ with $dim(Y)=p>0$, we have: 
$\kappa^{sat}( \overline{X}, \overline{h}^*(K_{ \overline{Y}}))<p$ (by Lemma \ref{factor}, the
saturation can be taken inside $\Omega^p_{ \overline{X}/ \overline{\cF}}$). This is independent from the choice of $\overline{X}, \overline{D}$, by Lemma \ref{birinv}.\end{definition}

The term ``orbifold base of $f$'' will be justified in \S \ref{sorbgeom} in the spirit of the general theory of orbifold pairs as in \cite{Ca07}. Since this theory is rather technical, we prefer to introduce some of our results
in this and the following section and postpone the proofs until later.

\smallskip

The specialness property will be shown in Theorem \ref{tspecorb} to be equivalent to other, apparently stronger properties:

\begin{theorem}\label{pspecequiv} The specialness of the orbifold base of $f$ is equivalent to each of the following properties:

1. for any $p>0$, and any coherent rank-one subsheaf $\cL\subset \Omega^p_{ \overline{X}/ \overline{\cF}}$, one has
$\kappa^{sat}( \overline{X}, \cL)<p$;

2. For any $g:B\dasharrow Y$ as in Definition \ref{dspec}, 
 $\kappa^{sat}(\overline{X},\cL)<p$ for any line bundle $\cL\subset \otimes^m \overline{h}^*(\Omega^1_{\overline{Y}})$.
\end{theorem}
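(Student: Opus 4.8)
The plan is to prove the chain of implications $(2) \Rightarrow (1) \Rightarrow$ (specialness) $\Rightarrow (2)$, since the implication (specialness) $\Rightarrow (1) \Rightarrow (2)$ would be the natural but harder direction and the converse implications are either trivial or reduce to Lemma \ref{factor}. First, $(1) \Rightarrow$ (specialness) is immediate: given $g : B \dashrightarrow Y$ as in Definition \ref{dspec}, Lemma \ref{factor} tells us that $\overline{h}^*(K_{\overline{Y}})$ is a rank-one subsheaf of $\Omega^p_{\overline{X}/\overline{\cF}}$, so property $(1)$ applied to $\cL = \overline{h}^*(K_{\overline{Y}})$ gives exactly $\kappa^{sat}(\overline{X}, \overline{h}^*(K_{\overline{Y}})) < p$. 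Likewise $(2) \Rightarrow$ (specialness) is immediate because $\overline{h}^*(K_{\overline{Y}}) = \overline{h}^*(\wedge^p \Omega^1_{\overline{Y}})$ is in particular a line subsheaf of $\overline{h}^*(\otimes^p \Omega^1_{\overline{Y}})$ (via the natural inclusion $\wedge^p \hookrightarrow \otimes^p$), so taking $m = p$ in $(2)$ yields the required bound. Thus the whole content of the theorem is the implication (specialness) $\Rightarrow (1)$ together with $(1) \Rightarrow (2)$; and the genuinely hard step, which I expect requires the full orbifold machinery of \S\ref{sorbgeom} and the reference \cite{Ca07}, is getting from the ``diagonal'' specialness hypothesis (which only controls pullbacks of canonical bundles) up to the control of \emph{arbitrary} rank-one subsheaves.

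For the step (specialness) $\Rightarrow (1)$, the idea is the following. Suppose for contradiction that there exist $p > 0$ and a rank-one subsheaf $\cL \subset \Omega^p_{\overline{X}/\overline{\cF}}$ with $\kappa^{sat}(\overline{X}, \cL) = p$ (it cannot exceed $p$ since $\Omega^p_{\overline{X}/\overline{\cF}}$ has rank $\binom{n-r}{p}$ and, more to the point, $\cL^{\otimes k, sat}$ lives in $\otimes^{pk}\Omega^1_{\overline{X}/\overline{\cF}}$ which is pulled back from $\overline{B}$; a Bogomolov-type bound forces $\kappa^{sat} \le \dim \overline{B}$, and a closer analysis at the level of $\overline{B}$ gives $\le p$). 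Since $\Omega^p_{\overline{X}/\overline{\cF}} = (\overline{f}^* \Omega^p_{\overline{B}^{reg}})^{sat}$ (this is the saturation of the $p$-th wedge of $(\overline{f}^*\Omega^1_{\overline{B}^{reg}})^{sat}$), the sheaf $\cL$ descends, up to saturation, to a rank-one subsheaf of $\Omega^{[p]}_{\overline{B}}$ on the normal space $\overline{B}$; here one must be careful with the quotient singularities of $\overline{B}$ and with the orbifold divisor $D_B$ encoding the multiple fibres, which is precisely where the orbifold formalism of \S 9 enters and why the authors postpone it. Then one invokes the theory of \cite{Ca07}: a rank-one subsheaf of $\Omega^{[p]}$ of maximal Kodaira dimension $p$ on an orbifold gives rise (via the Bogomolov sheaf / \cite{Ca07} construction) to a dominant rational fibration $g : B \dashrightarrow Y$ with $\dim Y = p$ whose orbifold base is of general type, and such that $\cL$ is (the saturation of) $\overline{h}^*(K_{\overline{Y}})$ up to a bounded correction — in particular $\kappa^{sat}(\overline{X}, \overline{h}^*(K_{\overline{Y}})) \ge p$, contradicting specialness. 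The main obstacle here is exactly this descent-and-factorization argument on the singular orbifold base, keeping track of the orbifold multiplicities so that the Kodaira dimension computed upstairs on $\overline{X}$ matches the orbifold Kodaira dimension downstairs; this is the heart of the paper and is deferred to Theorem \ref{tspecorb}.

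For the remaining step $(1) \Rightarrow (2)$, the point is to reduce a line subsheaf $\cL \subset \otimes^m \overline{h}^*(\Omega^1_{\overline{Y}})$ of a $g$-pullback of a \emph{tensor} power down to a wedge power, i.e. to fit it inside some $\Omega^{p'}_{\overline{X}/\overline{\cF}}$ with a matching Kodaira-dimension bound. The standard mechanism (going back to Bogomolov, and used in \cite{Ca07}) is: the Bogomolov sheaf $\cL$ inside $\otimes^m \Omega^1$ of a manifold, when it has $\kappa^{sat} = p$, is forced to be of the form $\overline{h}_1^*(K_{\overline{Y}_1})$ for some fibration $\overline{h}_1$ onto a $p$-dimensional base, because otherwise the foliation it defines would have a leaf through a general point of dimension too large relative to the value of $\kappa$. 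Applying this to $\overline{Y}$ itself: a line subsheaf of $\otimes^m \Omega^1_{\overline{Y}}$ with $\kappa^{sat} \ge p = \dim \overline{Y}$ must be comparable to $K_{\overline{Y}}$ up to a fibration, so composing with $g$ we land inside $\overline{h}'^*(K_{\overline{Y}'})$ for a fibration $\overline{h}' = g' \circ \overline{f}$ factoring through $\overline{f}$, whence $\Omega^{p}_{\overline{X}/\overline{\cF}}$ by Lemma \ref{factor}; now $(1)$ applies and yields $\kappa^{sat}(\overline{X},\cL) < p$. I expect this last reduction to be short relative to the orbifold-descent step, though it still requires invoking the Bogomolov/\cite{Ca07} structure of line subsheaves of tensor bundles, so some care is needed to state it in the logarithmic and orbifold context.
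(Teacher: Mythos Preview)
Your overall architecture matches the paper's: the equivalence (specialness) $\Leftrightarrow$ (1) is exactly \cite{Ca07}, Th\'eor\`emes 5.3 and 9.9, applied on the orbifold base after the descent machinery of \S\ref{sorbgeom} (this is the paper's Theorem \ref{tspecorb}, items 1--3), and the trivial directions $(1)\Rightarrow$ (specialness) and $(2)\Rightarrow$ (specialness) are as you say.

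The gap is in your step $(1)\Rightarrow(2)$. You invoke a ``Bogomolov-type'' statement of the form: \emph{a rank-one $\cL\subset\otimes^m\Omega^1$ with $\kappa^{sat}(\cL)=p$ is forced to be $\overline{h}_1^*(K_{\overline{Y}_1})$ for a fibration onto a $p$-dimensional base}. For \emph{wedge} powers $\Omega^p$ this is indeed Bogomolov's theorem (and its orbifold refinement in \cite{Ca07}), but for \emph{tensor} or \emph{symmetric} powers no such structural statement is available in that form. What one actually has, and what the paper uses, is \cite{CP15}, Theorem 7.11: on a smooth projective orbifold $(Z,D_Z)$ of dimension $p$, a rank-one $\cL_Z\subset[\otimes^m]\Omega^1(Z,D_Z)$ with $\kappa^{sat}_{D_Z}(Z,\cL_Z)=p$ forces $\kappa(Z,K_Z+D_Z)=p$. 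This is a genuinely deeper input (birational stability of the orbifold cotangent) than anything in \cite{Ca07}, and it is the essential new ingredient that upgrades the equivalence from wedge powers to tensor powers.

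Concretely, the paper's argument for (specialness)$\Rightarrow$(2) (equivalently your $(1)\Rightarrow(2)$) runs as follows: given $\cL\subset\otimes^m\overline{h}^*(\Omega^1_{\overline{Y}})$ with $\kappa^{sat}=p=\dim\overline{Y}$, descend via Corollary \ref{cuddiff} to $\cL_Z\subset[\otimes^m]\Omega^1(\overline{Y},D_{\overline{Y}})$ with $\kappa^{sat}_{D_{\overline{Y}}}=p$; then \cite{CP15}, Theorem 7.11 gives $\kappa(\overline{Y},K_{\overline{Y}}+D_{\overline{Y}})=p$, contradicting property 2 of Theorem \ref{tspecorb} (equivalent to specialness, hence to your (1)). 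Your sketch arrives at the same place, but you should replace the appeal to ``Bogomolov/\cite{Ca07}'' by \cite{CP15}, and drop the claim that $\cL$ is literally of the form $\overline{h}_1^*(K_{\overline{Y}_1})$, which is neither needed nor known for tensor powers.
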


\medskip

An important, although very particular example, where specialness holds, is as follows.

\begin{theorem}\label{tk=0} Assume that $\kappa(\overline{X}, det(\Omega^1_{ \overline{X}/ \overline{\cF}}))=0$, the orbifold base of $f$ is then special.
\end{theorem}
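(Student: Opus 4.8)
The plan is to check Definition~\ref{dspec} directly, using the orbifold-base dictionary of \S\ref{sorbgeom}. The starting observation, which is part of that section, is an identification of $\det\Omega^1_{\overline X/\overline\cF}$ with an orbifold canonical bundle: a local computation at the multiple fibres of $\overline f$ and along the boundary $\overline D=\overline f^{-1}(E)$ gives $\det\Omega^1_{\overline X/\overline\cF}=\overline f^{*}(K_{\overline B}+\Delta_{\overline B})$, where $\Delta_{\overline B}=E+\sum_i(1-\tfrac1{m_i})B_i$ ($B_i$ the multiple fibres of $\overline f$, of multiplicity $m_i$) and $(\overline B,\Delta_{\overline B})$ is the orbifold base of $f$ (the fractional coefficients becoming integral upon pulling back). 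Since $\overline f$ is a fibration this yields $\kappa(\overline X,\det\Omega^1_{\overline X/\overline\cF})=\kappa(\overline B,K_{\overline B}+\Delta_{\overline B})$, so the hypothesis reads $\kappa(\overline B,K_{\overline B}+\Delta_{\overline B})=0$.

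Now take any connected dominant rational $g:B\dasharrow Y$ with $\dim Y=p>0$ and set $\overline h=\overline g\circ\overline f$; we must show $\kappa^{sat}(\overline X,\overline h^{*}K_{\overline Y})<p$, so assume for a contradiction that it is $\ge p$. By Lemma~\ref{factor}, $\overline h^{*}K_{\overline Y}\subseteq\Omega^p_{\overline X/\overline\cF}$, since $\overline h$ factors through $\overline f$; and the same local analysis as above, applied to the composite fibration (again the content of \S\ref{sorbgeom}), shows that $\kappa^{sat}(\overline X,\overline h^{*}K_{\overline Y})=\kappa(\overline Y,K_{\overline Y}+\Delta_{\overline Y})$, where $(\overline Y,\Delta_{\overline Y})$ is the orbifold base of the \emph{orbifold} fibration $\overline g:(\overline B,\Delta_{\overline B})\dasharrow\overline Y$ --- its multiplicities combining those of $\overline g$ with those of $\Delta_{\overline B}$, i.e.\ with the multiple fibres of $\overline f$ and the boundary. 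Since the Kodaira dimension of an orbifold canonical bundle on a $p$-dimensional variety is at most $p$, the assumption forces $\kappa(\overline Y,K_{\overline Y}+\Delta_{\overline Y})=p$: the orbifold $(\overline Y,\Delta_{\overline Y})$ is of general type.

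To reach a contradiction with $\kappa(\overline B,K_{\overline B}+\Delta_{\overline B})=0$ I would invoke the subadditivity of the Kodaira dimension for an orbifold fibration whose orbifold base is of general type --- the orbifold form of the Viehweg--Koll\'ar theorem, belonging to the circle of positivity results of \cite{CP13}, \cite{CP15} already used here. It gives $\kappa(\overline B,K_{\overline B}+\Delta_{\overline B})\ge\kappa(\text{orbifold generic fibre of }\overline g)+\dim\overline Y$; since $\kappa(\overline B,K_{\overline B}+\Delta_{\overline B})=0$, ordinary easy addition forces the orbifold generic fibre of $\overline g$ to have non-negative Kodaira dimension, so this lower bound is already $\ge\dim\overline Y=p\ge1$, contradicting $\kappa(\overline B,K_{\overline B}+\Delta_{\overline B})=0$. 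Hence $\kappa^{sat}(\overline X,\overline h^{*}K_{\overline Y})<p$ for every such $g$, i.e.\ the orbifold base of $f$ is special.

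The substance of the proof lies in the steps attributed to \S\ref{sorbgeom}: the identity $\det\Omega^1_{\overline X/\overline\cF}=\overline f^{*}(K_{\overline B}+\Delta_{\overline B})$, and, more delicately, the fact that $\kappa^{sat}(\overline X,(\overline h^{*}K_{\overline Y})^{sat})$ computes the Kodaira dimension of the orbifold base of the orbifold fibration $\overline g$ --- here all the multiplicities (of $\overline h$, of the multiple fibres of $\overline f$, of $\overline D$) must be tracked so that the inequalities go the right way; this is precisely what makes the two notions of specialness in \S\ref{sorbgeom} coincide. The other ingredient, orbifold subadditivity over a base of general type, is a deep but available input. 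Modulo these, the statement is just the orbifold version of the classical fact (\cite{Ca07}) that a variety of vanishing Kodaira dimension is special. One could instead argue through Theorem~\ref{pspecequiv}(1): a rank-one $\cL\subseteq\Omega^p_{\overline X/\overline\cF}$ with $\kappa^{sat}(\overline X,\cL)=p$ would, by the logarithmic Bogomolov--Castelnuovo--de Franchis theorem and Lemma~\ref{factor}, arise from a fibration factoring through $\overline f$, returning one to the situation above.
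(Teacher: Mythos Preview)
Your argument is correct and follows the same route as the paper: both reduce to the assertion that a smooth orbifold pair with $\kappa=0$ is special. The paper simply invokes this as Theorem~\ref{k=0spec} (which is \cite{Ca07}, Th\'eor\`eme 7.7), whereas you effectively reprove it by combining the identification $\kappa^{sat}(\overline X,\overline h^{*}K_{\overline Y})=\kappa(\overline Y,K_{\overline Y}+\Delta_{\overline Y})$ (the equivalence $2\Leftrightarrow 3$ of Theorem~\ref{tspecorb}, i.e.\ \cite{Ca07}, Th\'eor\`eme 5.3) with orbifold $C_{n,m}^{orb}$ over a base of general type. So the approaches coincide; yours just unpacks the cited black box.

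One small correction: the step ``the orbifold generic fibre of $\overline g$ has non-negative Kodaira dimension'' is not a consequence of easy addition (which only gives $\kappa(F)\ge -p$). It follows instead from the more elementary observation that a nonzero section of $m(K_{\overline B}+\Delta_{\overline B})$ restricts to a nonzero section of $m(K_F+\Delta_F)$ on a general fibre, so $\kappa(\overline B,K_{\overline B}+\Delta_{\overline B})\ge 0$ already forces $\kappa(F,K_F+\Delta_F)\ge 0$. With this adjustment the subadditivity step goes through as you wrote it. Also note that the orbifold subadditivity you need is already available in \cite{Ca04}, \cite{Ca07}; the deeper results of \cite{CP13}, \cite{CP15} are not required at this point.
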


The proof of \ref{tk=0} follows from Theorem \ref{k=0spec}.


\section{Isotriviality criterion}

We can now formulate our main result.

\begin{theorem}\label{tisot} Let $f:X\to B$ be the fibration associated to an algebraic and everywhere regular foliation $\cF$ on the connected quasi-projective manifold $X$. Assume that the fibres of $f$ are canonically polarised and that the  orbifold base of $f$ is special. Then $f$ is isotrivial.
\end{theorem}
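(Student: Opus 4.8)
The plan is to reduce the statement to the classical isotriviality theorem for smooth families of canonically polarized manifolds over a special quasi-projective base (Taji's theorem, \cite{T}, built on \cite{CP15}), by first removing the multiple fibres via a ``tautological'' base change, and then transporting the specialness hypothesis through that base change. First I would perform the base change alluded to in \S8 of the introduction: the orbi-smooth fibration $f:X\to B$ has only multiple fibres with smooth reduction as singularities, and the holonomy covers along the leaves organize into a fibration $f':X'\to B'$ with $X'\to X$ (resp.\ $B'\to B$) finite, such that $f'$ is an honest smooth family of canonically polarized manifolds and $B'$ is a smooth quasi-projective manifold (it may be taken to be a manifold after resolving the quotient singularities of $B$, which pull back the foliation regularly). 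The family of leaves of $\cF$ is isotrivial if and only if the family $f'$ is isotrivial, since the leaves of $\cF$ and the fibres of $f'$ differ by a finite étale cover that is generically the same; so it suffices to prove $f'$ is isotrivial.

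Next I would invoke the Viehweg--Zuo machinery in the form available to us: if $f'$ were not isotrivial, then by \cite{V-Z} (or \cite{CP15}) there is a smooth compactification $(\overline{B'},S')$ with $S'$ a normal crossing divisor and an invertible subsheaf $\cL'\subset \Sym^m\Omega^1_{\overline{B'}}(\log S')$ with $\kappa(\overline{B'},\cL')\geq \mathrm{Var}(f')>0$, i.e.\ $\kappa^{sat}(\overline{B'},\cL')\geq 1$. Pulling $\cL'$ back by $\overline{f'}$ (and saturating inside $\otimes^m\Omega^1_{\overline{X'}}(\log\overline{D'})$) produces a rank-one subsheaf of $\otimes^m\overline{f'}^*\Omega^1_{\overline{B'}}\subset\otimes^m\Omega^1_{\overline{X'}/\overline{\cF'}}$ of positive $\kappa^{sat}$; more precisely, taking $m$-th symmetric powers of a generically generating section, one gets a subsheaf of $\overline{h'}^*(K_{\overline{Y}})$ inside $\Omega^p_{\overline{X'}/\overline{\cF'}}$ for the Iitaka fibration $\overline{h'}$ of $\cL'$ composed with $\overline{f'}$, with $\kappa^{sat}\geq p>0$. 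Thus the orbifold base of $f'$ is \emph{not} special in the sense of Definition \ref{dspec}. The final step is therefore to show the contrapositive of what we want: the orbifold base of $f$ special $\Longrightarrow$ the orbifold base of $f'$ special. This should follow because $B'\to B$ is a finite map, so any dominant rational map $g':B'\dashrightarrow Y$ can, after Stein factorization and replacing $Y$ by a generically finite cover, be compared with a map out of $B$; pushing forward pluridifferentials along the finite map $\overline{X'}\to\overline{X}$ and using that $\kappa^{sat}$ can only decrease (up to the degree of the cover, which is irrelevant for the strict inequality $<p$) under finite pullback of the relevant rank-one sheaves, one transfers a violation of specialness on $B'$ to one on $B$. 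Characterization (1) in Theorem \ref{pspecequiv}, which phrases specialness purely in terms of rank-one subsheaves of $\Omega^p_{\overline{X}/\overline{\cF}}$, is what makes this transfer clean: rank-one subsheaves of $\Omega^p_{\overline{X'}/\overline{\cF'}}$ of positive $\kappa^{sat}$ descend to such subsheaves of $\Omega^p_{\overline{X}/\overline{\cF}}$ after taking norms.

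The main obstacle I expect is precisely this last compatibility: making the base change $B'\to B$ (which exists as an orbifold étale cover, not an honest étale cover of $X$) interact correctly with the Log-conormal sheaf and its $\kappa^{sat}$, including controlling the behaviour over the boundary divisors $E\subset\overline{B}$ and $\overline{D}=\overline{f}^{-1}(E)$, where the multiple fibres live and where the logarithmic extension is defined. One must check that the orbifold structure $(B,D_B)$ recording the multiplicities of the multiple fibres — whose role is exactly to encode the discrepancy between $f$ and $f'$ — is the right bookkeeping device so that ``$(B,D_B)$ special'' (the geometric definition of \S9) matches ``$B'$ special'' in the ordinary sense, and that Theorem \ref{pspecequiv} genuinely identifies this with the sheaf-theoretic Definition \ref{dspec}. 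Granting the equivalence of the two notions of specialness (Theorem \ref{tspecorb}, asserted in the excerpt) and the base-change construction of \S8, the argument above closes; for the canonically trivial case one substitutes \cite{BPW17}, Theorem 9.9 (or \cite{CP15}, Theorem 8.2) for the Viehweg--Zuo sheaf, and nothing else changes, which is the content of Remark \ref{CY}.
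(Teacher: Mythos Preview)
Your base-change step misreads the construction of \S\ref{sbc}. You describe a \emph{finite} cover $B'\to B$, assembled from the local holonomy transversals, over which $f$ becomes smooth; but Reeb stability only furnishes local covers $U'\to U$, and there is no reason these glue to a global finite $B'\to B$ (an orbifold base can have trivial orbifold fundamental group and hence admit no nontrivial connected orbifold cover at all). The paper's ``tautological'' base change is different: one pulls $f$ back along $f$ itself, so the new base is $X$, of dimension $n>\dim B$ and certainly not finite over $B$, and $f_X:X_X\to X$ is the projection from the normalised fibre product $X\times_B X$. Lemma \ref{lbc} shows $f_X$ is submersive because it has a section (the diagonal) and $X_X$ is smooth by applying Reeb stability locally.

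This changes the downstream logic as well. There is no step ``$(B,D_B)$ special $\Rightarrow$ $B'$ special, then apply \cite{T}''; instead the Viehweg--Zuo sheaf for the smooth family $f_X$ lives directly in $[\Sym^m]\Omega^1_{\overline X}(\log\overline D)$, on the \emph{same} $\overline X$ that carries the Log-conormal sheaf of $\cF$. The Jabbusch--Kebekus refinement (\cite{JK}) says this sheaf comes from the moduli image $Z$, and Lemma \ref{lbc'} shows the moduli map $X\to Mod$ factors through $B$; hence $\cL$ is, over a Zariski open $U\subset B$, the saturated pullback of some $\cL_U\subset\Sym^m\Omega^1_U$. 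The descent to a rank-one subsheaf $\cL_{\overline B}\subset[\otimes^m]\Omega^1(\overline B,\overline{D_{\overline B}})$ with $\kappa^{sat}=\dim Z$ is then exactly Corollary \ref{cuddiff} (resting on Theorem \ref{updowndiff} and the neat models of Proposition \ref{propneat}), and specialness of $(\overline B,\overline{D_{\overline B}})$ via Theorem \ref{tspecorb} forces $\dim Z=0$. Your proposed ``take norms along the finite map $\overline{X'}\to\overline X$'' device is neither available (no such finite map has been constructed) nor needed.
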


This answers positively a question raised in \cite{AC} for $X$ quasi-projective (instead of quasi-K\"ahler there). It is quite likely that this more general case can be handled by similar arguments. The case when $X$ is compact and $\cF$ is of rank $1$ was treated in early versions of \cite{AC} but disappeared in the final version after a simplification of the proof of its main result. The case when $f$ is submersive was established in \cite{T}.

\begin{corollary}\label{corisot} Let $f:X\to B$ be the fibration associated to an algebraic and everywhere regular foliation $\cF$ on the compact connected quasi-projective manifold $X$. Assume that the fibres of $f$ are canonically polarised and that $\kappa( \overline{X},det(\Omega^1_{ \overline{X}/ \overline{\cF}}))=0$. Then $f$ is isotrivial.
\end{corollary}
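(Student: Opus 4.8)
The plan is to deduce Corollary~\ref{corisot} directly from Theorem~\ref{tisot} together with Theorem~\ref{tk=0}. The only thing to check is that the hypotheses of Corollary~\ref{corisot} imply those of Theorem~\ref{tisot}. We are given that $f:X\to B$ is the fibration attached to an algebraic everywhere regular foliation $\cF$ on a quasi-projective manifold $X$ (here even compact, hence projective), and that the fibres of $f$ are canonically polarised; these are exactly the standing hypotheses of Theorem~\ref{tisot}. It remains to verify that the orbifold base of $f$ is special. But this is precisely the content of Theorem~\ref{tk=0}: the assumption $\kappa(\overline{X},\det(\Omega^1_{\overline{X}/\overline{\cF}}))=0$ forces the orbifold base of $f$ to be special. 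Note that in the compact case $X=\overline{X}$ and $\overline{D}=\varnothing$, so there is no subtlety about the logarithmic extension; one simply has $\Omega^1_{\overline{X}/\overline{\cF}}=\Omega^1_{X/\cF}$, the honest conormal sheaf of $\cF$.

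Concretely, I would argue as follows. First, invoke Theorem~\ref{tk=0} with the given vanishing of the Kodaira dimension of the determinant of the Log-conormal sheaf to conclude that the orbifold base $(B,D_B)$ of $f$ is special. Second, apply Theorem~\ref{tisot} to $f:X\to B$: its fibres are canonically polarised by hypothesis and its orbifold base is special by the previous step, so $f$ is isotrivial, which is the assertion of the corollary. Both inputs are stated earlier in the paper (Theorem~\ref{tk=0} is stated as a consequence of Theorem~\ref{k=0spec}, and Theorem~\ref{tisot} is the main theorem), so the deduction is immediate and there is nothing substantial left to prove.

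If one wishes to be completely explicit about why no further hypothesis is needed, one can also observe that the birational-invariance statements (Lemma~\ref{birinv} and the remarks following the definition of $\kappa^{sat}$) guarantee that the conditions appearing in Theorem~\ref{tk=0} and in the definition of specialness do not depend on the chosen compactification $(\overline{X},\overline{D},\overline{B})$, so the corollary's hypothesis is intrinsic to $(X,\cF)$ and meshes correctly with the (equally intrinsic) hypotheses of Theorem~\ref{tisot}.

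There is no real obstacle here: the corollary is a formal specialisation of Theorem~\ref{tisot} obtained by feeding in the sufficient condition for specialness supplied by Theorem~\ref{tk=0}. The genuine mathematical work — constructing the tautological base change, producing and exploiting the Viehweg--Zuo sheaf via \cite{CP15}, and proving the equivalence of the two notions of specialness — is all contained in Theorems~\ref{tisot}, \ref{pspecequiv} and \ref{tk=0}, and none of it needs to be revisited to obtain Corollary~\ref{corisot}.
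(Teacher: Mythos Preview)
Your proposal is correct and follows exactly the paper's own route: the corollary is deduced immediately from Theorem~\ref{tisot} together with Theorem~\ref{tk=0} (equivalently Theorem~\ref{k=0spec}), and the paper records this in the last corollary of \S\ref{sorbgeom}, where the second assertion is said to follow from Theorem~\ref{k=0spec} after the first (which is Theorem~\ref{tisot}) has been established. There is nothing to add.
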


\begin{remark}\label{CY} The same assertions hold in the case when the fibres of $f$ have trivial, rather than ample, canonical bundle. See remark \ref{explic-CY} in the next section.
\end{remark}


\section{Viehweg-Zuo sheaves}\label{svzs}

Let again $f:X\to B$ be the fibration associated to an everywhere regular and algebraic foliation on a connected quasi-K\"ahler manifold $X$. We assume here that its fibres are canonically polarised 
and have Hilbert-Samuel polynomial $P$. Let $Mod_P$ be the quasi-projective scheme constructed 
in \cite{V}, parametrising the manifolds which are canonically polarised with Hilbert-Samuel polynomial $P$. If $B^*\subset B$ is the (non-empty) Zariski open subset of points over which $f$ is submersive, there is a natural holomorphic map $\mu^*:B^*\to Mod_P$ sending $b$ to the isomorphism class of $F_b$. 

Its image $M$ is algebraic, of dimension $Var(f)\in \{0,1,...,dim(B)=n-r\}$, where $Var(f)$ is the generic rank of the Kodaira-Spencer map $KS: T_{B^*}\to R^1f_*(T_{X/B})$.

When $f$ is submersive, $B^*=B,\mu^*=\mu$, and $B$ is smooth. We can thus then choose compactifications such that $\overline{B}$ is smooth, and $S:=\overline{B}-B$ is of simple normal crossings.

We have the following result of Viehweg and Zuo (\cite{V-Z}). 

\begin{theorem}\label{vz} Assume that $f:X\to B$ is submersive. There exists a line bundle $\cL\subset Sym^m(\Omega^1_{\overline{B}}(Log(S)))$ such that $\kappa(\overline{B},\cL)\geq Var(f)=dim(M)$.

\end{theorem}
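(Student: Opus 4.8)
{\it Proof (sketch, following \cite{V-Z})}: The plan is to convert the pluricanonical positivity of $\overline{f}$ into a polarised variation of Hodge structure over $\overline{B}$ whose logarithmic Higgs field detects $Var(f)$, and then to read off the desired symmetric logarithmic differential by iterating that Higgs field.

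First I would fix compactifications with $\overline{B}$ smooth and with $S$ together with the discriminant of $\overline{f}$ a simple normal crossing divisor; after a weak semistable reduction in the sense of Abramovich--Karu and a suitable finite base change $\overline{B}'\to\overline{B}$ ramified along $S$, the family may be assumed mild with unipotent local monodromies. None of these operations lowers the Kodaira dimension of a line bundle after pullback, so it suffices to construct $\cL$ upstairs. Now one invokes Viehweg's weak positivity theorem for $\overline{f}_{*}\,\omega_{\overline{X}/\overline{B}}^{\otimes\nu}$ together with his analysis of the moduli map: since the family has variation $\dim M$, for suitable $\nu$ and $N$ there is a line bundle $A$ on $\overline{B}$ with $\kappa(\overline{B},A)\ge Var(f)=\dim M$ and a nonzero section of $\omega_{\overline{X}/\overline{B}}^{\otimes N\nu}\otimes\overline{f}^{*}A^{-1}$ (equivalently, $A^{N}$ embeds into a tensor power of $\overline{f}_{*}\omega_{\overline{X}/\overline{B}}^{\otimes\nu}$); after blowing up $\overline{X}$ one may assume the zero divisor of this section is normal crossing.

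Using that section I would take the associated $N$-fold cyclic cover of $\overline{X}$, desingularise to a smooth projective $\overline{Z}$, and let $\overline{h}$ be the composition $\overline{Z}\to\overline{X}\to\overline{B}$; a further base change makes $\overline{h}$ semistable over $S$. The weight-$n$ primitive cohomology of $\overline{h}$ carries a polarised variation of Hodge structure, and its Deligne canonical extension gives a logarithmic Higgs bundle
\[
(E,\theta),\qquad E=\bigoplus_{p+q=n}E^{p,q},\qquad \theta\colon E^{p,q}\longrightarrow E^{p-1,q+1}\otimes\Omega^1_{\overline{B}}(Log\,S),
\]
whose behaviour along $S$ is governed by Schmid's nilpotent orbit theorem. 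The cyclic cover is arranged precisely so that $A$ embeds into $E^{n,0}$. Composing with the iterates of $\theta$ yields a chain
\[
A\longrightarrow E^{n,0}\xrightarrow{\ \theta\ }E^{n-1,1}\otimes\Omega^1_{\overline{B}}(Log\,S)\longrightarrow\cdots\longrightarrow E^{n-m,m}\otimes Sym^{m}\bigl(\Omega^1_{\overline{B}}(Log\,S)\bigr).
\]

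The crucial step, which I expect to be the main obstacle, is to show this composite is nonzero for some $m\le n$ and then to extract an honest rank-one subsheaf $\cL\subset Sym^{m}(\Omega^1_{\overline{B}}(Log\,S))$ with $\kappa(\overline{B},\cL)\ge\kappa(\overline{B},A)$. Hodge theory is indispensable here. Let $\cG\subset(E,\theta)$ be the $\theta$-invariant subsheaf generated by $A$. If all iterates of $\theta$ vanished on $A$, then $\cG$ would be a Higgs subsheaf with zero field, hence of non-positive degree by Simpson's semistability of the Higgs bundle underlying a variation of Hodge structure (after the standard reduction to a curve mapping onto $M$), contradicting the bigness of $A$; hence some $m\ge 1$ works. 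Choosing the largest such $m$, the composite factors through $\cH\otimes Sym^{m}(\Omega^1_{\overline{B}}(Log\,S))$ with $\cH$ a subsheaf of $E^{n-m,m}$ annihilated by $\theta$, on which the same semistability bounds the degree; combined with the Hodge-metric (Griffiths) curvature estimates for $E$, this allows one to split off a rank-one quotient without destroying positivity, producing $\cL$. Finally, descending from $\overline{B}'$ back to $\overline{B}$ preserves $\kappa$, so $\kappa(\overline{B},\cL)\ge Var(f)=\dim M$.
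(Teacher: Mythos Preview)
The paper gives no proof of this statement: Theorem~\ref{vz} is quoted as a result of Viehweg and Zuo with a reference to \cite{V-Z}, and the paragraph that follows only records the Jabbusch--Kebekus refinement from \cite{JK}. Your sketch is a faithful high-level outline of the original argument in \cite{V-Z} (weak positivity yielding a line bundle $A$ with $\kappa(A)\ge Var(f)$, a cyclic cover embedding $A$ into the top piece of a logarithmic Higgs bundle, iteration of the Higgs field, and Zuo's negativity of kernels to extract $\cL$), so there is nothing on the paper's side to compare it against.
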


A refinement of Theorem \ref{vz} by Jabbusch and Kebekus ((\cite {JK}, Theorem 1.4) states that 
this $\cL$ actually comes from the moduli space: $\cL\subset \Sym^m(\mu^*(\Omega^1_M))^{sat}$ (by abuse of notation, we write $\mu^*$ for the image of $d\mu$; cf. section 3). We call such an $\cL$ {\it a Viehweg-Zuo sheaf}.  

\begin{remark}\label{explic-CY} Theorem 9.9 of \cite{BPW17} establishes the existence of a Viehweg-Zuo 
sheaf (that is, a rank-one subsheaf of the logarithmic symmetric differentials with log Kodaira dimension
equal to the variation of the family) on the base of a smooth quasiprojective family of projective 
manifolds with trivial canonical class. Such a base admits a natural map into the moduli space $Mod_H$ 
of polarized manifolds, constructed by Viehweg in \cite{V}. The argument of \cite{JK} goes through in 
this case once the existence of Viehweg-Zuo sheafs is established. Our subsequent considerations do not 
use the ampleness of the canonical class of the fibres, so that the results are also valid for 
quasiprojective
families of manifolds with trivial canonical class (see Remark \ref{CY}).
\end{remark}

 \medskip

In our setting of a fibration defined by a foliation, $f$ is not necessarily submersive. However we know that the only singular fibers of $f$
are multiple fibres with smooth reduction. Equivalently, the non-smoothness of the fibration is encoded in the finite, but nontrivial holonomy groups around the leaves of $\cF$. In the next two sections, we deal with this problem, recalling the Reeb stability theorem and providing a simple base-change to eliminate the multiple fibres. The new base then carries a Viehweg-Zuo sheaf. In section 9 we descend this sheaf back to the orbifold base of the original fibration and derive a contradiction with speciality in the non-isotrivial case.


\section{Reeb Stability Theorem}\label{srs}

Let again $\cF$ be a regular algebraic foliation on the complex manifold $X$. We know that all its holonomy groups 
are finite. In the $C^{\infty}$ category, Reeb stability theorem asserts that locally around a fiber $F$ with holonomy 
group $G$ and a local transverse $T$, $X$ is the quotient of $\tilde{F}\times T$, where $\tilde{F}$ is the $G$-covering
of $F$,  by the diagonal action of $G$, and the map $f$ is the projection to $T/G$. In the holomorphic situation,
the complex structure on the neighbouring fibers varies; however there is the following adaptation of Reeb stability
(see \cite{HV}). let $G_b$ be the (finite) holonomy group of $\cF$ along a fibre $F_b=f^{-1}(b),\ b\in B$. There exist an open neighborhood $b\in U\subset B$ and a finite Galois covering $\beta:U'\to U=U'/G_b$, such that the 
normalisation $X_{U'}$ of the fibered product $X_U\times_UU'$, where $X_U$ stands for $f^{-1}(U)$, is a $G_b$-\'etale covering
of $X_U$ and submersive over $U'$. The map $\beta:U'\to U$ is obtained by taking a smooth holomorphic local transverse
to (reduced) $F_b$; over a sufficiently small $U\subset B$ containing $b$ it is finite surjective.

Since the second projection $f':X_{U'}\to U'$ is submersive, it is $C^{\infty}$-equivalent to a product, so in the 
$C^{\infty}$ context one finds back the usual Reeb stability theorem . In particular, all fibres of $f$ are, up to finite \'etale equivalence, isomorphic as $C^{\infty}$-manifolds.

\section{Elimination of multiple fibres by base-change}\label{sbc}

Our generalisation is based on a simple trick (already introduced in \cite{AC} for fibrations in curves, but the general case is similar) which eliminates multiple fibres.

\medskip

Let $(X,\cF)$ be as above, $\cF$ algebraic and everywhere regular. Let $f:X\to B$ be the associated fibration. Let $f_X:X_X\to X$ be the fibration deduced from $f:X\to B$ by the base-change $\beta(=f):X\to B$, and normalisation of the fibre-product $F:X\times_BX\to X$, seen as the projection to the second factor, while $\gamma: X\times_B X\to X$ is the projection onto the first factor, and is seen as lying over $\beta:X\to B$. We thus have: $f_X=F\circ \nu$, where $\nu: X_X\to X\times_BX$ is the normalisation map.

\begin{lemma}\label{lbc} In the above situation, the fibration $f_X:X_X\to X$ is submersive.
\end{lemma}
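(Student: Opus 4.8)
The plan is to work locally over $B$ and reduce the statement to the Reeb-stability picture recalled in \S\ref{srs}. Fix a point $b \in B$ with finite holonomy group $G_b$, and choose the Galois cover $\beta_b : U' \to U = U'/G_b$ and the submersion $f' : X_{U'} \to U'$ provided by \cite{HV}, so that $X_{U'}$ is the normalisation of $X_U \times_U U'$ and is a $G_b$-étale cover of $X_U$. Note that $U'$ maps to $X_U$: indeed $\beta_b$ is induced by a smooth local transverse to the reduced fibre $F_b$, which is a local section of $f$ over $U$; write $\sigma : U' \to X_U$ for the resulting map, so $f \circ \sigma = \beta_b$. Then over $U$ the base change $\beta = f : X \to B$ restricted to $X_U$ factors through $\sigma$, in the sense that the fibre product $X_U \times_B X_U$ receives $X_{U'}$ compatibly. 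Concretely, the local transverse realises $\beta = f$ over this chart, up to the finite map $\beta_b$, as the same cover that trivialises the holonomy; so the normalised fibre product $X_X \to X$ over $X_U$ is, up to étale equivalence, the pullback of the submersion $f' : X_{U'} \to U'$.

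More precisely, the key point is a comparison of normalisations. The scheme-theoretic fibre product $X_U \times_B X_U$ (projection to the second factor) has, over a point $x \in X_U$ lying in $F_b$, fibre equal to $F_b$ with its multiplicity; its normalisation replaces this by the reduced leaf, but more importantly the base point $x$ itself carries the holonomy data. The claim is that this normalisation is locally isomorphic, as a space over $X_U$, to $X_{U'} \times_{U'} (\text{local transverse})$, which is submersive over the transverse because $f'$ is submersive. Thus $f_X$ is submersive over $X_U$. The verification is local and amounts to: (i) the base change of $f$ by its own transverse $\beta_b$ is $f'$ after normalisation (this is essentially the defining property of the Reeb-stability cover, the holonomy cover trivialising the multiple fibre); and (ii) normalisation commutes with the further étale localisation needed to pass from $U'$ back to $X_U$. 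Since submersiveness is local on the source and target and is preserved under étale base change, assembling these local pictures over a cover of $B$ by such charts $U$ gives submersiveness of $f_X : X_X \to X$ everywhere.

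The main obstacle I expect is bookkeeping the normalisation correctly: the fibre product $X \times_B X$ is typically non-normal precisely along the multiple fibres (its components and their multiplicities must be disentangled), and one must check that the normalisation $\nu : X_X \to X \times_B X$ produces exactly the holonomy cover $X_{U'}$ locally, rather than some partial normalisation. The clean way to see this is to note that after the base change $\beta = f$, the pulled-back fibration acquires a \emph{section} (the diagonal), so each fibre of $F : X \times_B X \to X$ over a point of a multiple leaf contains that marked point; the normalisation then separates the branches of the multiple fibre exactly according to the holonomy action, which is precisely the description of $X_{U'}$ in \S\ref{srs}. Once this identification is in place the submersiveness is immediate, since $f'$ is submersive by construction. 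The case of rank-one $\cF$ treated in \cite{AC} is the model: there $X \times_B X \to X$ over a fibre of multiplicity $m$ is locally $\{ y^m = t \} \to \{ t \}$ base-changed by $t = x^m$, i.e. $\{ y^m = x^m \}$, whose normalisation is a disjoint union of $m$ smooth branches $y = \zeta x$, and the projection to the $y$-coordinate chart is submersive; the general case is the same statement fibrewise along the transverse direction.
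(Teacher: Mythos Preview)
Your proposal is correct and follows essentially the same route as the paper: both arguments use Reeb stability to write a neighbourhood of $x\in F_b$ in $X$ as a product $U'\times W$ of the local transverse with a piece of the leaf, so that the normalised fibre product over this chart is $X_{U'}\times W$ (normalisation commuting with the smooth factor $W$), which is submersive over $U'\times W$ because $f':X_{U'}\to U'$ is. The paper's write-up is terser and leaves the normalisation bookkeeping implicit, whereas you spell it out and add the illustrative rank-one computation; but the underlying mechanism---diagonal section plus local identification with the holonomy cover---is the same.
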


\begin{proof} The fibration $F:X\times_BX\to X$ has a natural section given by the diagonal of $X$. The inverse image of this section has a unique component lying over $X$ which gives a section of the map $f_X:X_X\to X$, since $\nu$ is a finite map. Moreover, $X_X$ is smooth, as seen from Reeb stability theorem: indeed, if $U'$ is a germ of smooth manifold transversal to a fibre $F_b$ of $f$, and finite surjective over a neighborhood $U$ of $b\in B$, then the normalisation of $X\times_UU'$ is smooth. We can now write a neighborhood $V\in X$ of any point $x\in F_b$ in the form $U'\times W$, if $W$ is a neighborhood of $x$ in $F_b$. The fibration $f_b:X_X\to X$ is thus, by the same argument, a holomorphic submersion.
\end{proof}

\begin{lemma}\label{lbc'} In the above situation, the map $\mu:X\to Mod$ defined in \S\ref{svzs} factors through $B$.
\end{lemma}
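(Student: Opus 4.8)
The plan is to reduce everything to showing that the isomorphism class of the fibre $f_X^{-1}(x)$ depends only on $f(x)\in B$; the factorization of $\mu$ is then formal. Recall that $\mu$ sends $x\in X$ to the class in $Mod$ of the fibre of the submersion $f_X$ (Lemma \ref{lbc}) over $x$. This makes sense because, by the Reeb-stability description recalled in \S\ref{srs} and used in Lemma \ref{lbc}, that fibre is a finite \'etale cover of a leaf of $\cF$, hence again a compact canonically polarised manifold (an \'etale cover of a canonically polarised manifold is canonically polarised), whose Hilbert--Samuel polynomial is independent of $x$ since $f_X$ is a submersion with compact fibres over the connected $X$.

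First I would make the fibres of $f_X$ explicit leaf by leaf. Fix $b\in B$, let $L=F_b$ be the corresponding leaf, $G_b$ its holonomy group, and $\beta\colon U'\to U=U'/G_b$ a local transverse as in \S\ref{srs}, so that the normalisation $X_{U'}$ of $X_U\times_U U'$ is smooth, submersive over $U'$, and a $G_b$-\'etale cover of $X_U$. On a neighbourhood $V\cong U'\times W$ of a point $x\in L$ (with $W\ni x$ open in $L$, and $p\in U'$ the point lying over $b$), the same unwinding of $f_X=F\circ\nu$ as in the proof of Lemma \ref{lbc} --- using that $f|_V$ is the composite $U'\times W\to U'\xrightarrow{\beta}U\hookrightarrow B$ and that normalisation commutes with the smooth base change $U'\times W\to U'$ --- identifies $f_X^{-1}(V)$ with $X_{U'}\times W$, the map $f_X$ becoming $\pi_{U'}\times\mathrm{id}_W$ with $\pi_{U'}\colon X_{U'}\to U'$. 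Thus $f_X^{-1}(x)$ depends only on the transverse coordinate of $x$, and over $L\cap V=\{p\}\times W$ the map $f_X$ is the trivial bundle $\pi_{U'}^{-1}(p)\times W\to W$. Varying $V$, the restriction $f_X^{-1}(L)\to L$ is therefore a holomorphic fibre bundle over the connected base $L$, so all of its fibres are isomorphic --- to the $G_b$-\'etale cover of $L$ attached to the holonomy representation $\pi_1(L)\to G_b$.

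It follows that $\mu$ is constant along each fibre of $f$. Since $f\colon X\to B$ is proper with connected fibres onto the normal space $B$, we have $f_*\cO_X=\cO_B$, and a holomorphic map to a quasi-projective variety that is constant along the fibres of such an $f$ factors holomorphically through $f$; this yields $\mu=\bar\mu\circ f$ with $\bar\mu\colon B\to Mod$ holomorphic, as claimed. \textbf{The only step requiring genuine care} is the identification $f_X^{-1}(V)\cong X_{U'}\times W$, and through it the independence of $f_X^{-1}(x)$ of the point $x$ inside a fixed leaf: it rests on precisely the behaviour of normalisation under the smooth base change $U'\times W\to U'$ and on the explicit form of $f$ on $V$, i.e.\ exactly the input already set up for Lemma \ref{lbc}; once this is in hand, the descent of $\mu$ is purely formal.
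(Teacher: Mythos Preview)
Your argument is correct and rests on the same local input as the paper's proof---the Reeb-stability product neighbourhoods $V\cong U'\times W$ and the identification $f_X^{-1}(V)\cong X_{U'}\times W$ already implicit in Lemma~\ref{lbc}. The packaging of the final step, however, differs. You show directly that the fibre $f_X^{-1}(x)$ is isomorphic to $(f')^{-1}(p)$ for the unique $p\in\beta^{-1}(b)$, hence constant along each leaf, and then descend $\mu$ through $f$ using $f_*\cO_X=\cO_B$. The paper instead never looks at $\mu:X\to Mod$ fibre by fibre: it observes that the moduli map $\mu':U'\to Mod$ of the smooth family $f':X_{U'}\to U'$ agrees with $\mu^*\circ\beta$ over $U^*=U\cap B^*$, and since $\beta:U'\to U$ is finite proper onto the normal $U$, this forces $\mu^*:B^*\to Mod$ to extend holomorphically across $b$; doing this for every $b$ yields $\mu:B\to Mod$. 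Your route is more faithful to the statement as written (factoring the map from $X$), while the paper's route makes explicit that the map on $B$ is the holomorphic extension of the a priori given $\mu^*$ on the smooth locus---a fact you would otherwise have to deduce afterwards. Either way the substance is the same local computation.
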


\begin{proof} Let $b\in B$ be any point. Let $b\in U\subset B$ be any sufficiently small neighborhood, and let $\beta:U'\to U$ be the finite Galois cover of group $G$ defined by a germ of manifold $U'$ transverse to the reduction of the 
fiber $F_b$ as in \S \ref{srs}. Base-changing by $\beta$ and normalising, we obtain $\gamma:X'\to X$ and $f':X'\to U'$, $\gamma$ being $G$-Galois and \'etale, and $f'$ submersive. The map $\mu':U'\to Mod$ is well-defined and coincides with $\mu^*\circ \beta:U'\to Mod$, if $\mu^*:B^*\cap U=U^*\to Mod$ is defined as in \S\ref{srs}. Since $B$ is normal and $\beta:U'\to U$ finite and proper, the map $\mu^*:B^*\to Mod$ extends to $B$ as a holomorphic map $\mu:B\to Mod$. 
\end{proof}


\section{Orbifold geometry}\label{sorbgeom}

We shall actually prove a more detailed version of theorem \ref{pspecequiv}, namely Theorem \ref{tspecorb} below. Before this, some notions concerning the geometry of orbifold bases
need to be recalled.


\subsection{Orbifold bases}

We recall the set-up from \cite{Ca04} and \cite{Ca07}. An orbifold pair is a connected normal compact complex-analytic variety $Z$ together with a Weil $\Q$-divisor $D=\sum_{j}c_j D_j$ where $D_j$ are the irreducible
components and the rational coefficients $c_j\in ]0,1]$. The union $\lceil D\rceil=\cup_{j}D_j$ is called the {\it support}, or ``round-up'' of $D$. The extreme cases are when $D=0$, or when $D=\lceil D\rceil,$ so $c_j=1\ \forall j$. 

If $F\subset Z$ is an irreducible Weil divisor not contained in $\lceil D\rceil$, we define its coefficient $c_D(F)$ in $D$ to be $0$. Thus $D=\sum_{F}c_D(F).F$, the sum running over all irreducible Weil divisors $F$ of $Z$.

We say that the orbifold pair $(Z,D)$ is smooth if $Z$ is smooth and the support of $D$ has only simple normal crossings. If moreover $D=\lceil D\rceil$, we say that we have a {\it smooth logarithmic pair}.

The purpose of introducing these objects here is to encode (and eliminate in codimension one) the multiple fibres of fibrations by means of ``virtual base changes''. The orbifold pair $(X,D)$ above may indeed be seen as a virtual ramified cover of $X$ ramifying to (rational) order $m_j=(1-c_j)^{-1}\in\ ]1,+\infty]$ over $D_j$, at least in codimension $1$. The (rational or $+\infty$) numbers $m_j$ will be called the {\it multiplicities} of $D$ along the $D_j's$. Conversely, $c_j=(1-\frac{1}{m_j})$, and $D=\sum_F(1-\frac{1}{m_D(F)})F, F$ running over all irreducible Weil divisors of $X$.

Alternatively, a pair $(X,D)$ interpolates between the projective case when $D=0$, and the quasi-projective case when $D=\lceil D\rceil$.

The main example of orbifold pairs (with integral or infinite multiplicities) comes from orbifold bases of fibrations:

\begin{definition} Let $f:Z\to Y$ be a surjective holomorphic proper map with connected fibres (that is,  a fibration) between normal connected complex spaces with $\Q$-factorial singularities. Fix an orbifold pair structure $(Z,D)$ on $Z$. 

For each irreducible Weil divisor $E\subset Y$, write $f^*(E)=\sum_k t_kF_k+R$, where $F_k$ runs through the irreducible Weil divisors of $Z$ mapped onto $E$ by $f$, while $R$ consists of the $f$-exceptional Weil divisors of $Z$ mapped into, but not onto, $E$.

Define the multiplicity $m_{f,D}(E)$ relative to $D$ of the generic fibre of $f$ over $E$ by the 
formula $m_{f,D}(E)= inf_{k}\{t_k m_{D}(F_k)\}$. 

The {\bf orbifold base} $(Y,D_{f,D})$ of $f$ is an orbifold pair where the divisor is defined by the following formula $$D_{f,D}=\sum_{E}(1-\frac{1}{m_{f,D}(E)})E$$ where $E$ ranges through the irreducible Weil divisors of $Y$. 

\end{definition} 
 
 This sum is finite since $m_{f,D}(E)=1$ unless either $t_k>1$, or $m_D(F_k)\neq 1$ for all $k$. If $D=0$, the multiplicity $m_f(E)=inf_{k}\{t_k\}$ is the multiplicity of the fiber over a general point of $E$ as considered in \cite{Ca04}.

Sometimes, when the data $(f, D)$ is clear from the context, we shall write simply 
$D_Y$ rather than $D_{f,D}$.



\subsection{Orbifold morphisms}

\begin{definition}\label{deforbmorph} (\cite{Ca07}) Let $f:X\to Z$ be a fibration between connected complex manifolds equipped with smooth orbifold structures $(X,D)$ and $(Z,D_Z)$. We say that $f$ is an orbifold morphism if, for any irreducible divisors $F\subset Z$ and $E\subset X$ such that $f(E)\subset F$, with $f^*(F)=tE+R$ where the support of $R$ does not contain $E$, one has $t m_D(E)\geq m_{D_Z}(F)$, where $m_D(E)$ (resp. $m_{D_Z}(F))$ is the multiplicity of $E$ in $D$ (resp. of $F$ in $D_Z)$. 

We shall say that $f$ is an orbifold birational equivalence if moreover $f$ is birational and $f_*(D)=D_Z$. 

\end{definition}

The following two simple situations provide examples. We leave the easy check to the reader.

\begin{example}\label{exorbequiv}
Let $u:(Z',D')\to (Z,D)$ be a proper bimeromorphic holomorphic map between connected complex manifolds $Z',Z$, equipped with orbifold divisors $D',D$ such that both orbifolds $(Z',D')$ and $(Z,D)$ are smooth, and moreover $u_*(D')=D$. Assume that all $u$-exceptional divisors of $Z'$ are equipped with the multiplicity $+\infty$. Then $f$ is an orbifold birational equivalence.
\end{example}

\begin{example}\label{exorbfib} Let $f:(X,D)\to (Z,D_Z)$ be as in definition \ref{deforbmorph} above. Assume that $(Z,D_Z)$ is the orbifold base of $f$. This does not imply in general that $f$ is an orbifold morphism. This will, however, be the case as soon as the multiplicities in $D$ of the $f$-exceptional divisors $E\subset X$ are sufficiently large; in particular
  when all these multiplicities are equal to $+\infty$. 
\end{example}

We shall need good bimeromorphic models of fibrations as in the proposition below. These are obtained using Raynaud's flattening theorem and Hironaka's desingularisation.

\begin{proposition}\label{propneat} (\cite{Ca07}, Proposition 4.10, p. 843). Let $(X_1,D_1)$ be a smooth orbifold pair, with $X_1$ projective\footnote{Compact K\"ahler would be sufficient.} connected. Let $h_1:X_1\to Z_1$ be a fibration
(or, more generally, a dominant meromorphic map with connected fibers). There exists a commutative diagram: 

$$\xymatrix{
(X,D)\ar[r]^{u}\ar[d]^{h}&(X_1,D_1)\ar[d]^{h_1}\\
(Z,D_{Z})\ar[r]^{v} & Z_1\\}$$

where $u,v$ are birational, and moreover the following holds:

1. $u:(X,D)\to (X_1,D_1)$ is a birational orbifold morphism.

2. $(X,D), (X_1,D_1), (Z,D_{Z})$ are smooth.

3. $(Z,D_Z)$ is the orbifold base of $h:(X,D)\to Z$ 

4. $h:(X,D)\to (Z,D_Z)$ is an orbifold morphism.

5. Every $h$-exceptional divisor of $X$ is also $u$-exceptional.
\end{proposition}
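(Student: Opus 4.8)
The plan is to follow the two-step recipe that the text itself announces: first flatten $h_1$, then desingularise, and finally assign the multiplicity $+\infty$ to all exceptional divisors so that every comparison of multiplicities becomes trivial.

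\smallskip

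\textbf{Step 1: flattening and desingularisation.} First I would apply Raynaud's flattening theorem to $h_1 : X_1 \to Z_1$: there is a modification $v_0 : Z_0 \to Z_1$ such that the strict transform $h_0 : X_0 \to Z_0$ of $h_1$ (the main component of $X_1 \times_{Z_1} Z_0$) is flat. Since we are free to blow up further, I would then invoke Hironaka to replace $Z_0$ by a smooth model and $X_0$ by a smooth model $X$ dominating both $X_1$ and $X_0$, in such a way that $v : Z \to Z_1$ and $u : X \to X_1$ are projective birational morphisms of smooth varieties, the induced map $h : X \to Z$ is a (flat, hence equidimensional) fibration, and, crucially, the $u$-exceptional locus and the $h$-preimage of the $v$-exceptional locus are simple normal crossing divisors whose union, together with the strict transforms of $\lceil D_1 \rceil$ and of $h_1^*(\operatorname{Supp} D_{Z_1,\mathrm{orb}})$, is still simple normal crossing. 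This is a standard ``resolve the indeterminacy and make everything SNC'' argument; the equidimensionality coming from flattening is what will make item 3 work.

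\smallskip

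\textbf{Step 2: choice of the orbifold divisors.} On $X$ I would put $D := u_*^{-1}(D_1) + \sum_i E_i$ where $E_i$ runs over all $u$-exceptional prime divisors, each taken with coefficient $1$ (multiplicity $+\infty$), as in Example \ref{exorbequiv}; this already gives that $u : (X,D) \to (X_1,D_1)$ is a birational orbifold morphism, which is item 1, and (2) for $(X,D)$ and $(X_1,D_1)$ is built in. On $Z$ I take $D_Z := D_{h,D}$, the orbifold base of $h : (X,D) \to Z$ in the sense of the definition above; this is item 3 by construction, and item 2 for $(Z,D_Z)$ follows because $Z$ is smooth and (after the SNC arrangement of Step 1) the support of $D_Z$, being dominated by divisors in the SNC divisor on $X$, is SNC on $Z$.

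\smallskip

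\textbf{Step 3: the orbifold morphism property and item 5.} For item 4 I must check $t\, m_D(E) \geq m_{D_Z}(F)$ for each prime $F \subset Z$ and $E \subset X$ with $h(E) \subset F$, $h^*F = tE + R$. If $E$ is $u$-exceptional then $m_D(E) = +\infty$ and there is nothing to prove; if $E$ is not $u$-exceptional, it is one of the divisors $F_k$ occurring in $h^*F = \sum_k t_k F_k + (\text{$h$-exceptional})$, so $t\, m_D(E) = t_k m_D(F_k) \geq \inf_k\{t_k m_D(F_k)\} = m_{D_Z}(F) = m_{h,D}(F)$ directly from the definition of the orbifold base. So item 4 holds essentially for free. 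For item 5 I must rule out $h$-exceptional divisors that are not $u$-exceptional; this is exactly where flattening enters: since $h$ is equidimensional, it has no exceptional divisors at all (a divisor contracted by $h$ would force a fibre of dimension $> \dim X - \dim Z$), so the condition is vacuous — or, staying closer to the generality of ``main component'', any $h$-exceptional divisor lies over the $v$-exceptional locus and, by the arrangement in Step 1, is $u$-exceptional.

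\smallskip

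\textbf{Main obstacle.} The genuinely delicate point is Step 1: simultaneously (a) flattening $h_1$, (b) resolving $X$, $Z$ and the indeterminacy of the induced maps, and (c) keeping \emph{all} the relevant divisors — the transforms of $D_1$, the exceptional divisors of $u$ and $v$, and the fibre components sitting over $\operatorname{Supp} D_{Z_1}$ — in simple normal crossing position, while ensuring every $h$-exceptional divisor is $u$-exceptional. Each ingredient is classical (Raynaud, Hironaka, the usual bookkeeping of SNC resolutions), but combining them so that items 2–5 hold \emph{on the nose} requires some care; this is presumably why the statement is quoted from \cite{Ca07} rather than reproved here, and for the purposes of this paper I would simply cite \cite[Proposition 4.10]{Ca07}.
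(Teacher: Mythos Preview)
Your proposal is correct and matches the paper's approach exactly: the paper does not give a proof at all, but simply cites \cite[Proposition~4.10]{Ca07} after remarking that the construction ``is obtained using Raynaud's flattening theorem and Hironaka's desingularisation'' --- precisely the two ingredients you unpack in Steps~1--3, together with the device (Examples~\ref{exorbequiv} and~\ref{exorbfib}) of assigning multiplicity $+\infty$ to the exceptional divisors. Your closing remark that one should simply cite \cite{Ca07} is therefore exactly what the paper does.
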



\subsection{Smooth orbifold bases of equidimensional  fibrations}

The notions of morphisms and birational equivalence for orbifold pairs are defined in the preceding subsections only for smooth orbifold pairs. The appropriate definitions are in general not available in the singular case, and the notion of a resolution of a (normal, quasi-projective, say) orbifold pair is not available either. The problem is as follows: one 
can introduce the notion of a smooth model of an orbifold as soon as the underlying manifold is $\Q$-factorial (and so it makes sense to talk about the pullback of a 
Weil divisor), but it is not clear whether any two such models are necessarily birational in the orbifold sense (see \cite{Ca07}, p. 832--833). 
However in the special case described below, we can introduce smooth orbifold pairs $(\overline{B},\overline{D_{\overline{B}}})$ which can be seen as resolutions of compactifications of the quasi-projective pairs $(B,D_B)$. The important property is that, for a given $(B,D_B)$,  all of these $(\overline{B},\overline{D_{\overline{B}}})$ are birationally equivalent in the orbifold sense (Corollary \ref{rbireq} below). Roughly speaking, the reason is that we don't introduce ``unexpected'' exceptional divisors by base change in this particular case.

\medskip

We consider a {\it smooth} quasi-projective complex manifold $X$ together with a projective fibration $f:X\to B$ onto a normal quasi-projective variety $B$. We assume that $f$ is {\it equidimensional}, so that its (connected) fibres $X_b$ are all of the same dimension $r$. In particular, this is the case if $f$ is the family of leaves of an everywhere regular foliation $\cF$ on $X$. 

Put the trivial orbifold structure (i.e. the zero divisor) on $X$ and let
$(B,D_B)$ be the orbifold base of $f:X\to B$. Take projective compactifications $\overline{B_1}, \overline{X_1}$ with 
the following properties: $f$ extends to $\overline{f_1}:\overline{X_1}\to \overline{B_1}$;  $\overline{X_1}$ is smooth;  $\overline{D_1}:=\overline{X_1}-X$ is a simple normal crossing divisor. Next, choose smooth modifications $\overline{X},\overline{B}$ of $\overline{X_1}, \overline{B_1}$, in such a way that $\overline{f_1}$ lifts to $\overline{f}:\overline{X}\to \overline{B}$, and moreover such that $\overline{D'}:=\overline{X}-X',\overline{D_B}:=\overline{B}-B'$ are simple normal 
crossing divisors, where $X'\subset \overline{X}, B'\subset \overline{B}$ denote the inverse images of $X,B$ in  $ \overline{X},  \overline{B}$ respectively. By further blow-ups of  $ \overline{X},  \overline{B}$, we can also assume that, moreover:

(1) The union of $\overline{D_B}$ and the inverse image $E_{\overline{B}}\cup D_{\overline{B}}$ of the (Zariski)-closure of $D_B\subset B$ in $\overline{B}$ is a simple normal crossings divisor. 
Here $E_{\overline{B}}$ is the (Zariski)-closure of the exceptional divisor of $B'\to B$, while $D_{\overline{B}}$ is the (Zariski)-closure of the strict transform of $D_B$ in $\overline{B}$.

(2) The union of $\overline{D'}$ and of the (Zariski)-closure $E$ of the exceptional divisor of $\chi:X'\to X$ is a simple normal crossings divisor. 

\medskip

The divisor from (2) defines the orbifold structure $(\overline{X}, \overline{D})$: we assign the 
multiplicity $+\infty$ (or equivalently: coefficient $1$) to all of its components. We equip $\overline{B}$ with the 
following 
orbifold divisor $\overline{D_{\overline{B}}}$: its support is the union described in (1), the multiplicities of the exceptional 
components $E_{\overline{B}}$ and of the border components $\overline{D_B}$ are $+\infty$, while each component of the closure of the strict transform of $D_B$ is assigned its multiplicity in the orbifold base of $f:X\to B$. Roughly speaking,
the ``old'' components come with their ``old'' multiplicities, whereas the ``new'' ones acquire infinite multiplicities.

\begin{definition}\label{dresolution} For a given $f:X\to (B,D_B)$ as above, we shall call any $\overline{f_1}:\overline{X_1}\to \overline{B_1}$ as above a compactification of $f$, and any $\overline{f}:(\overline{X},\overline{D})\to (\overline{B}, \overline{D_{\overline{B}}})$ a compactified resolution of $f:X\to (B,D_B)$.
\end{definition}

\begin{lemma} $\overline{f}:(\overline{X},\overline{D})\to (\overline{B},\overline{D_{\overline{B}}})$ is an orbifold morphism to its orbifold base (smooth by construction).
\end{lemma}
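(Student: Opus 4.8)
The plan is to verify the two conditions in Definition \ref{deforbmorph} directly from the construction of the compactified resolution, namely that $(\overline{B},\overline{D_{\overline{B}}})$ is the orbifold base of $\overline{f}$, and that $\overline{f}$ is then an orbifold morphism because the $\overline{f}$-exceptional divisors have been given infinite multiplicity. Since both orbifold pairs are smooth by construction (the supports being simple normal crossings divisors arranged by the preliminary blow-ups), the smooth-case definitions all apply and there is nothing to check about $\Q$-factoriality or existence of pullbacks.

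First I would establish that $(\overline{B},\overline{D_{\overline{B}}})$ coincides with the orbifold base $D_{\overline{f}, \overline{D}}$ of the fibration $\overline{f}:(\overline{X},\overline{D})\to \overline{B}$. Fix an irreducible divisor $E\subset \overline{B}$ and write $\overline{f}^*(E)=\sum_k t_k F_k + R$ as in the definition of the orbifold base. There are three cases. If $E$ is a ``new'' exceptional component in $E_{\overline{B}}$ or a border component of $\overline{D_B}$: then $E$ lies over a proper subvariety of $B$ downstairs, so by construction (2) its preimage meets the exceptional divisor $E$ of $\chi:X'\to X$, which carries multiplicity $+\infty$ in $\overline{D}$; hence some $m_{\overline{D}}(F_k)=+\infty$ and $m_{\overline{f},\overline{D}}(E)=+\infty$, matching the coefficient $1$ we assigned. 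If $E$ is the strict transform of a component $D_j$ of $D_B$ with multiplicity $m_j$: over the generic point of $D_j$ the map $f:X\to B$ is already ``honest'' (no new blow-ups interfere generically), so $\overline{f}^*(E)$ there is governed by the original multiple fibre structure, the fibre components $F_k$ carrying multiplicity $+\infty$ in $\overline{D}$ and appearing with multiplicity $t_k$ equal to the scheme-theoretic multiplicity of the fibre, whose infimum reproduces $m_j = m_{f}(D_j)$; thus $m_{\overline{f},\overline{D}}(E)=m_j$, again matching. If $E$ is any other divisor (not in the support of $\overline{D_{\overline{B}}}$): its generic fibre is a reduced leaf, so every $t_k=1$ and every $m_{\overline{D}}(F_k)=1$ except possibly for fibre components meeting $E$, but a general point of such an $E$ lies over $B\setminus D_B$ where $f$ has reduced fibres, so $m_{\overline{f},\overline{D}}(E)=1$ and $E$ gets coefficient $0$, as it should.

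Next, granting that $(\overline{B},\overline{D_{\overline{B}}})$ is the orbifold base of $\overline{f}$, I would invoke Example \ref{exorbfib}: a fibration onto its own orbifold base is an orbifold morphism as soon as the multiplicities in the upstairs divisor of the $\overline{f}$-exceptional divisors are large enough, and in particular when they are all $+\infty$. By construction, $\overline{D}$ assigns multiplicity $+\infty$ to all of its components, so in particular to every $\overline{f}$-exceptional one; hence the inequality $t\, m_{\overline{D}}(E)\geq m_{\overline{D_{\overline{B}}}}(F)$ required in Definition \ref{deforbmorph} holds trivially (the left side is $+\infty$) whenever $E$ is $\overline{f}$-exceptional. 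For a non-$\overline{f}$-exceptional $E\subset\overline{X}$ mapping into $F\subset\overline{B}$, one has $F=\overline{f}(E)$ and the inequality $t\, m_{\overline{D}}(E)\geq m_{\overline{D_{\overline{B}}}}(F)$ is exactly the statement that $m_{\overline{D_{\overline{B}}}}(F)=m_{\overline{f},\overline{D}}(F)\leq t\, m_{\overline{D}}(E)$, which is immediate from the defining infimum formula for $m_{\overline{f},\overline{D}}(F)$. This closes the argument.

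I expect the only genuinely delicate point to be the bookkeeping in the first step: making sure that after all the preliminary blow-ups of $\overline{X}$ and $\overline{B}$ one really has, over the generic point of each component of the strict transform of $D_B$, exactly the original multiple-fibre picture and no spurious exceptional contributions — this is precisely where property (2) of the construction (that the exceptional divisor $E$ of $\chi:X'\to X$ together with $\overline{D'}$ is SNC, with $E$ carrying multiplicity $+\infty$) is used, and where condition 5 of Proposition \ref{propneat} in spirit guarantees that no new divisor is both $\overline{f}$-exceptional and ``unexpected.'' Everything else is a direct unwinding of the definitions.
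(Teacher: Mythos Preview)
Your overall strategy---verify that $(\overline{B},\overline{D_{\overline{B}}})$ is the orbifold base of $\overline{f}$, then invoke Example \ref{exorbfib}---is sound and is essentially the paper's approach, but there are two concrete slips in the case analysis that need fixing.

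First, in your Case 1 (where $E\subset\overline{B}$ is $\beta$-exceptional), you write ``hence \emph{some} $m_{\overline{D}}(F_k)=+\infty$ and $m_{\overline{f},\overline{D}}(E)=+\infty$''. Since the orbifold-base multiplicity is an \emph{infimum} over the $t_k\,m_{\overline{D}}(F_k)$, having only one such term infinite does not suffice. What is actually needed---and what the paper uses---is the equidimensionality of $f:X\to B$: if $E$ is $\beta$-exceptional then $\beta(E)$ has codimension $\geq 2$ in $B$, so any $F_k$ dominating $E$ maps into $f^{-1}(\beta(E))$, which also has codimension $\geq 2$ in $X$; hence \emph{every} such $F_k$ is $\chi$-exceptional and carries multiplicity $+\infty$ in $\overline{D}$. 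You flag equidimensionality as ``the delicate point'' at the end, but it belongs precisely here.

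Second, in your Case 2 (where $E$ is the strict transform of a component $D_j$ of $D_B$), you assert that the fibre components $F_k$ carry multiplicity $+\infty$ in $\overline{D}$. This is false: by construction $\overline{D}$ consists only of the boundary $\overline{D'}$ and the $\chi$-exceptional divisor, so the strict transforms of the (reduced) fibre components over $D_j$ have $m_{\overline{D}}(F_k)=1$. With $m_{\overline{D}}(F_k)=+\infty$ your own formula would give $m_{\overline{f},\overline{D}}(E)=+\infty$, not $m_j$. The correct computation is $\inf_k t_k\cdot 1=\inf_k t_k=m_f(D_j)=m_j$, which does recover the assigned multiplicity; once this is repaired, your argument goes through and matches the paper's.
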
 

\begin{proof} By the fact that the components of the boundaries $\overline{D}, \overline{D_B}$ of both $\overline{X},\overline{B}$ are all equipped with infinite multiplicities, it is sufficient to consider only divisors of $\overline{X}, \overline{B}$ which intersect the inverse images of $X,B$ respectively. Because $f:X\to B$ is equidimensional, the inverse image in $\overline{X}$ of any irreducible divisor $\overline{F}\subset \overline{B}$ which is $\beta$-exceptional, where $\beta:B'\to B$ is the natural birational map, is $\chi$-exceptional, where $\chi:X'\to X$ is the similar modification. Since all of these exceptional divisors are also equipped with the infinite multiplicity, the inequalities required for $\overline{f}$ to be an orbifold morphism are satisfied for these divisors. The remaining divisors for which these inequalities need to be checked are now the strict transforms in $\overline{B}$ of the components of $D_B$. But the multiplicities assigned to them being the same ones as in $D_B$ itself, the verification is trivial. 
\end{proof}

\begin{remark} \label{remorbequiv}. Since the closure in $\overline{X}$ of any component $C$ of the exceptional divisor of $X'\to X$ is, by definition, equipped with the multiplicity $+\infty$, and $f:X\to B$ has equidimensional fibres, the following properties for a divisor $E$ in $\overline{X}$ which is not contained in $\overline{X}-X'$ are equivalent:

(1) $C$ is $f\circ \chi$-exceptional.

(2) $C$ is equipped with the multiplicity $+\infty$ in $\overline{D}$.
\end{remark}

\begin{definition}\label{defdom} Let $f:(X,D)\to (Z,D_Z)$ and $f':(X',D')\to (Z',D_{Z'})$ be  fibrations between connected projective manifolds $X,Z,X',Z'$, equipped with orbifold divisors $D,D_Z,D',D_{Z'}$ respectively. We say that $f'$ dominates $f$ if there exists birational morphisms $u:X'\to X$, and $v:Z'\to Z$ such that $v\circ f'=f\circ u$ and $u_*(D')=D,v_*(D_{Z'})=D_Z$. 
\end{definition}

The next lemma is needed to show that all our compactified resolutions are orbifold birationally equivalent.

\begin{lemma}\label{lemorbequiv} Let $f:X\to B$ be an equidimensional fibration with $X,B$ quasi-projective, $X$ smooth and $B$ normal. Let $\overline{f}:(\overline{X},\overline{D})\to (\overline{B},\overline{D_{\overline{B}}})$ be an orbifold fibration constructed as above from $f:X\to B$. If $\overline{f'}:(\overline{X'},\overline{D'})\to (\overline{B'},\overline{D_{\overline{B'}}})$ is another such fibration (i.e. obtained by the same construction), then:

1. There exists a third fibration $\overline{f''}:(\overline{X''},\overline{D''})\to (\overline{B''},\overline{D_{\overline{B''}}})$ arising from this construction, and dominating the first two ones.  

2. The domination maps $u:(\overline{X''},\overline{D"})\to (\overline{X},\overline{D})$ and 
$v:(\overline{B''},\overline{D_{\overline{B''}}})\to (\overline{B},\overline{D_{\overline{B}}})$ such that $v\circ \overline{f''}=\overline{f}\circ u$ are both orbifold birational equivalences, and the same for $u',v'$.
\end{lemma}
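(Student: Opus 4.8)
The plan is to produce a single compactified resolution $\overline{f''}$ dominating the two given ones by routine birational operations, and then to verify that the four comparison maps are orbifold birational equivalences by reading off the shape of the prescribed orbifold divisors. Throughout, write $X'\subset\overline{X}$, $B'\subset\overline{B}$ for the inverse images of $X,B$ (as in the construction preceding Definition~\ref{dresolution}), and similarly $X''_\circ\subset\overline{X''}$, $B''_\circ\subset\overline{B''}$. For Step~1 I would keep, for the dominating model, the very same compactification $\overline{f_1}:\overline{X_1}\to\overline{B_1}$ of $f$ that was used to build $\overline{f}$. Since $\overline{X}$ and $\overline{X'}$ are both modifications of $X$ (hence of $\overline{X_1}$) they are birational, and likewise $\overline{B},\overline{B'}$; let $\overline{X''}$ be a smooth projective variety admitting birational morphisms $u:\overline{X''}\to\overline{X}$ and $u':\overline{X''}\to\overline{X'}$ (e.g. a desingularisation of the closure of the graph of the birational correspondence between them, which automatically lies over $\overline{X_1}$), and $\overline{B''}$, $v:\overline{B''}\to\overline{B}$, $v':\overline{B''}\to\overline{B'}$ the analogous objects over $\overline{B_1}$. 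The two fibrations induce a rational map from $\overline{X''}$ to $\overline{B''}$ commuting with $v$ and $v'$; after Hironaka's elimination of indeterminacy and further blow-ups of $\overline{X''},\overline{B''}$ along centres over the boundary and exceptional loci, this becomes a morphism $\overline{f''}:\overline{X''}\to\overline{B''}$ for which all the SNC requirements of Definition~\ref{dresolution}, as well as conditions (1) and (2), hold. Equipping $\overline{X''},\overline{B''}$ with the orbifold divisors $\overline{D''},\overline{D_{\overline{B''}}}$ dictated by the construction then realises $\overline{f''}$ as a compactified resolution of $f:X\to B$. Because the construction is carried out over the fixed $\overline{X_1},\overline{B_1}$, one has $u^{-1}(X')=X''_\circ$, $v^{-1}(B')=B''_\circ$, and likewise for $u',v'$.

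\textbf{Step 2 ($u$ and $u'$).} On the source side the prescribed orbifold divisors $\overline{D''}$ and $\overline{D}$ consist entirely of components of multiplicity $+\infty$, so by Example~\ref{exorbequiv} it suffices to check that $u$ is proper birational, that $u_*\overline{D''}=\overline{D}$, and that every $u$-exceptional prime divisor of $\overline{X''}$ appears in $\overline{D''}$. For the last point: a $u$-exceptional prime divisor $C$ either lies in the boundary $\overline{X''}-X''_\circ$, hence in $\overline{D''}$ by construction, or it meets $X''_\circ=u^{-1}(X')$, in which case $C\cap X''_\circ$ is a prime divisor whose image in $X$ coincides with that of $u(C)\cap X'$ and thus has codimension $\geq 2$ (since $u(C)$ has codimension $\geq 2$ in $\overline{X}$); so $C$ is one of the exceptional divisors of the modification $X''_\circ\to X$ occurring in condition (2), and again lies in $\overline{D''}$. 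A direct component-matching — boundary components to boundary components, exceptional components to exceptional components, using $u^{-1}(X')=X''_\circ$ — gives $u_*\overline{D''}=\overline{D}$ (all coefficients being $1$), and Example~\ref{exorbequiv} applies. The same reasoning handles $u'$; in particular $\overline{f''}$ dominates $\overline{f}$ and $\overline{f'}$ in the sense of Definition~\ref{defdom}, which completes part 1.

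\textbf{Step 3 ($v$ and $v'$).} On the base side the multiplicities are no longer all $+\infty$, so I would verify Definition~\ref{deforbmorph} directly. Exactly as in Step~2, a $v$-exceptional prime divisor of $\overline{B''}$ either lies in the boundary or is a component of the exceptional divisor of $B''_\circ\to B$ — in particular it is never the strict transform of a component of $D_B$ — and hence carries multiplicity $+\infty$; while the strict transform in $\overline{B''}$ of a component of $D_B$ is the strict transform of the corresponding component of $D_B$ in $\overline{B}$ and so keeps its orbifold-base multiplicity. Now take prime divisors $C\subset\overline{B''}$, $G\subset\overline{B}$ with $v(C)\subseteq G$ and $v^*(G)=tC+R$, $C\not\subseteq\Supp R$. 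If $C$ is $v$-exceptional its multiplicity is $+\infty$ and $t\,m_{\overline{D_{\overline{B''}}}}(C)\geq m_{\overline{D_{\overline{B}}}}(G)$ holds trivially; if $C$ is not $v$-exceptional then $v(C)=G$, so $C$ is the strict transform of $G$, $t=1$, and both sides equal $m_{\overline{D_{\overline{B}}}}(G)$. Hence $v$ is an orbifold morphism, and together with $v_*\overline{D_{\overline{B''}}}=\overline{D_{\overline{B}}}$ (another component-matching, strict transforms going to strict transforms with the same coefficient, boundary and exceptional components to boundary and exceptional components) this makes $v$ an orbifold birational equivalence. The same holds for $v'$, which with Step~2 proves part 2.

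\textbf{Expected main difficulty.} There is no single deep step; the substance is the bookkeeping in Steps~2 and~3 that forces every exceptional divisor of a comparison map to carry multiplicity $+\infty$. This is precisely where the equidimensionality of $f:X\to B$ is essential: it is what guarantees that passing to modifications of $X$ and of $B$ creates no unexpected exceptional divisors over codimension-one loci of the base (cf. Remark~\ref{remorbequiv} and the Lemma preceding it), so that the identities $u^{-1}(X')=X''_\circ$ and $v^{-1}(B')=B''_\circ$ persist through the successive blow-ups needed to realise $\overline{f''}$ as a bona fide compactified resolution, and so that the comparisons of exceptional loci above are legitimate.
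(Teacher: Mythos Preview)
Your proof is correct and follows the same approach as the paper's (very terse) argument: build a common compactified resolution $\overline{f''}$ and then check that every comparison-map-exceptional divisor carries multiplicity $+\infty$, invoking Example~\ref{exorbequiv} on the $X$-side and a direct multiplicity check on the $B$-side; the paper compresses your Steps~2--3 into the single sentence ``follows from Remark~\ref{remorbequiv}''. The only cosmetic difference is that the paper constructs $\overline{f''}$ by first choosing a compactification $\overline{f_1''}$ dominating \emph{both} initial compactifications $\overline{f_1},\overline{f_1'}$ and then running the construction, whereas you reuse $\overline{f_1}$ --- your choice also works (and your identities $u'^{-1}(\text{open})=X''_\circ$, $v'^{-1}(\text{open})=B''_\circ$ are indeed valid, since both compactifications contain $X$, resp.\ $B$, as an open subset and the birational correspondence restricts to the identity there), but the paper's choice makes the symmetry between $(u,v)$ and $(u',v')$ automatic.
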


\begin{proof} The existence of $\overline{f''}:(\overline{X''},\overline{D''})\to (\overline{B''},\overline{D_{\overline{B''}}})$ dominating both $\overline{f},\overline{f'}$ is obtained by applying the construction of $\overline{f}$ to a fibration $\overline{f_1''}:\overline{X_1''}\to \overline{B_1''}$ which compactifies $f:X\to B$ and dominates both initial compactifications $\overline{f_1}:\overline{X_1}\to \overline{B_1}$ and $\overline{f'_1}:\overline{X'_1}\to \overline{B'_1}$. The fact that $u,v,u',v'$ are orbifold morphisms and thus orbifold birational equivalences now follows from Remark \ref{remorbequiv}.

\end{proof}

\begin{corollary}\label{rbireq} For a given $f:X\to B$, the smooth pairs $(\overline{B}, \overline{D_{\overline{B}}})$ are all birationally equivalent in the orbifold sense, and may be seen as orbifold resolutions of compactifications of $(B,D_B)$. 
\end{corollary}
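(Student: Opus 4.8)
The plan is to reduce the statement to Lemma \ref{lemorbequiv}, of which it is essentially an immediate consequence once the definitions are unwound. First I would fix two compactified resolutions $\overline{f}\colon(\overline{X},\overline{D})\to(\overline{B},\overline{D_{\overline{B}}})$ and $\overline{f'}\colon(\overline{X'},\overline{D'})\to(\overline{B'},\overline{D_{\overline{B'}}})$ of the given equidimensional $f\colon X\to(B,D_B)$, both produced by the construction preceding Definition \ref{dresolution}. By Lemma \ref{lemorbequiv}(1) there is a third such resolution $\overline{f''}\colon(\overline{X''},\overline{D''})\to(\overline{B''},\overline{D_{\overline{B''}}})$ dominating both, and by Lemma \ref{lemorbequiv}(2) the induced birational morphisms $v\colon(\overline{B''},\overline{D_{\overline{B''}}})\to(\overline{B},\overline{D_{\overline{B}}})$ and $v'\colon(\overline{B''},\overline{D_{\overline{B''}}})\to(\overline{B'},\overline{D_{\overline{B'}}})$ are orbifold birational equivalences. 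This roof is precisely a witness that $(\overline{B},\overline{D_{\overline{B}}})$ and $(\overline{B'},\overline{D_{\overline{B'}}})$ are birationally equivalent in the orbifold sense; since this relation is transitive (one composes roofs, using Lemma \ref{lemorbequiv}(1) again to produce a common refinement of any two), all the smooth pairs $(\overline{B},\overline{D_{\overline{B}}})$ attached to $f$ lie in a single orbifold birational equivalence class.

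For the second assertion, that each $(\overline{B},\overline{D_{\overline{B}}})$ ``may be seen as an orbifold resolution of a compactification of $(B,D_B)$'', I would just read off the construction before Definition \ref{dresolution}: the smooth model $\overline{B}$ comes equipped with a birational morphism onto the compactification $\overline{B_1}$ of $B$, and $\overline{D_{\overline{B}}}$ is built so that the strict transform of each component of $D_B$ keeps its multiplicity from the orbifold base of $f$, while the boundary $\overline{B}-B'$ and every newly created exceptional divisor receive multiplicity $+\infty$. Thus $(\overline{B},\overline{D_{\overline{B}}})$ is exactly a smooth model of the pair obtained by closing up $(B,D_B)$ inside $\overline{B_1}$ and adjoining the boundary with coefficient $1$, i.e. a resolution of a compactification of $(B,D_B)$ in the intended sense. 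There is nothing to prove here beyond matching terminology to the construction.

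The real content lies entirely inside Lemma \ref{lemorbequiv}, which I treat as a black box; granting it, the corollary is bookkeeping. The one point deserving attention is why a mere roof of orbifold birational equivalences suffices, and this is where the equidimensionality of $f$ is essential: Remark \ref{remorbequiv} shows that in the equidimensional case the construction does not introduce ``unexpected'' exceptional divisors with finite multiplicity, since every $f\circ\chi$-exceptional divisor already carries multiplicity $+\infty$, so the domination maps are full orbifold birational equivalences rather than merely orbifold morphisms. I expect the main obstacle to be purely expository: making the equivalence-relation bookkeeping precise while keeping clear that the hypothesis actually used is equidimensionality — for a general fibration the analogous uniqueness of the smooth orbifold model fails, as recalled at the start of \S\ref{sorbgeom}.
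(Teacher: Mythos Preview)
Your proposal is correct and matches the paper's approach: the corollary is stated in the paper without proof, immediately after Lemma \ref{lemorbequiv}, precisely because it is the bookkeeping consequence you describe. Your identification of the roof argument, of the role of equidimensionality via Remark \ref{remorbequiv}, and of the purely terminological nature of the second clause is accurate.
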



\subsection{Integral parts of orbifold tensors} We recall the construction of orbifold differentials from \cite{Ca07}. Consider a smooth orbifold pair $(Z,D)$ and local analytic coordinates $(z)=(z_1,...,z_n)$ near a given point $a\in Z$, centered at $a$ and `adapted' to $D$, that is such that the support of $D$ is contained in the union of the coordinate hyperplanes in the domain of this chart. Thus $D$ has near $a$ an equation with fractional exponents: $0=\Pi_{j=1}^{j=n}z_j^{c_j}$.

Let $m>0$ be an integer. We then define $[T^m]\Omega^1(Z,D)$, also written 
$[\otimes^m]\Omega^1(Z,D),$ as the locally free subsheaf of $\cO_Z$-modules of 
$\otimes^m \Omega^1_Z(log(\lceil D\rceil))$ generated by the elements: $z^{-[c_J]}.dz_{j_1}\otimes ...\otimes dz_{j_m}$. Here $J$ runs over all multi-indices $(j_1,...,j_m)\in \{1,...,n\}^m$, and $z^{-[c_J]}=z_1^{-[k_1(J)c_1]}\dots z_n^{-[k_n(J)c_n]}$, where, for $j=1,...,n$, $k_j(J)$ is the number of occurrences of $j$ in $J$, that is, the number of $k\in \{1,\dots,m\}$ such that $j_k=j$.

So for instance $\frac{dz_1^{\otimes m}}{z_1^{[mc_1]}}$ is among the generators; if $m_1$ is the multiplicity of the corresponding componend of $D$
this is rewritten as $z_1^{\lceil m/m_1 \rceil}(\frac{dz_1}{z_1})^{\otimes m}$. When the multiplicities are infinite, we obtain the usual 
logarithmic differentials.

One can easily check that this sheaf is independent from the chosen adapted coordinates, and so well-defined globally. Moreover, it is also equal to the $G$-invariant part $[\pi_*(\otimes^m(\Omega^1(Z,D)))]^G$ of $\pi_*(\otimes^m(\Omega^1(Z,D)))$, if $\pi:Z_D\to Z$ is any $G$-Galois Kawamata cover adapted to $(Z,D)$ in the sense of \cite{CP15}.

The sheaves $[S^m](\Omega^1(Z,D))$ of symmetric orbifold differentials are defined as the (locally free, saturated) subsheaves of $[T^m](\Omega^1(Z,D))$ defined similarly by the obvious symmetrisation conditions. See \cite{Ca07} for an explicit description.

These tensors satisfy, just as in the case $D=0$, a bimeromorphic invariance property:

\begin{proposition}\label{pbirinv} (\cite{Ca07}, Theorem 3.5, p. 835) Let $u: (Z',D')\to (Z,D)$ be a bimeromorphic orbifold morphism. 

Then $u^*:H^0(Z,[T^m](\Omega^1(Z,D))\to H^0(Z',[T^m](\Omega^1(Z',D'))$ is an isomorphism, for each $m>0$. \end{proposition}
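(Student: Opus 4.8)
The plan is to treat separately the soft part --- injectivity of $u^\ast$ and the fact that sections on $Z'$ descend to $Z$ --- from the one genuinely local point, namely that $u^\ast$ does land in the orbifold tensors of $(Z',D')$; the latter is where the orbifold--morphism hypothesis enters. I first record the two structural features of $u$ I use. As $u$ is bimeromorphic and $Z$ is smooth, $u^{-1}$ is holomorphic on a dense Zariski--open $V\subseteq Z$ whose complement is analytic of codimension $\geq 2$; and as $u_\ast(D')=D$, the isomorphism $u\colon u^{-1}(V)\xrightarrow{\ \sim\ }V$ identifies the orbifold pairs $(u^{-1}(V),D'|)$ and $(V,D|)$, hence the sheaves $[T^m]\Omega^1$ over them.

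\textbf{Step 1 (well--definedness of $u^\ast$).} This is local on $Z'$. At $a'\in Z'$ pick coordinates $(w_i)$ adapted to $D'$ and, at $a=u(a')$, coordinates $(z_\ell)$ adapted to $D$, and bound the pole of $u^\ast\bigl(z^{-[c_J]}\,dz_{j_1}\otimes\cdots\otimes dz_{j_m}\bigr)$ along each hyperplane $E_i=\{w_i=0\}$, since these elements generate $[T^m]\Omega^1(Z,D)$ near $a$. Set $a_{i\ell}=\ord_{E_i}(u^\ast z_\ell)\in\ZZ_{\geq 0}$. The factor $u^\ast z^{-[c_J]}$ has pole of order $\sum_\ell a_{i\ell}\,[k_\ell(J)c_\ell]$ along $E_i$, while $u^\ast(dz_{j_1}\otimes\cdots\otimes dz_{j_m})=d(u^\ast z_{j_1})\otimes\cdots\otimes d(u^\ast z_{j_m})$ is holomorphic and vanishes along $E_i$. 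Expanding in the monomials $dw_{k_1}\otimes\cdots\otimes dw_{k_m}$ and estimating the order of vanishing of each coefficient, one finds --- using that all the orders $a_{i\ell}$ are integers, the trivial bound $[k_\ell(J)c_\ell]\le k_\ell(J)c_\ell=k_\ell(J)\bigl(1-m_D(F_\ell)^{-1}\bigr)$ for $F_\ell=\{z_\ell=0\}$ a component of $\lceil D\rceil$, the equality $\sum_\ell k_\ell(J)=m$, and the orbifold--morphism inequalities $a_{i\ell}\,m_{D'}(E_i)\ge m_D(F_\ell)$ --- that the pole order along $E_i$ of the coefficient of each monomial stays within the bound built into the corresponding generator of $[T^m]\Omega^1(Z',D')$. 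Hence $u^\ast$ of a generator is a local section of $[T^m]\Omega^1(Z',D')$, the symmetric case being identical, and $u^\ast\colon H^0(Z,[T^m]\Omega^1(Z,D))\to H^0(Z',[T^m]\Omega^1(Z',D'))$ is well defined.

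\textbf{Step 2 (injectivity and surjectivity).} Injectivity is immediate: $[T^m]\Omega^1(Z,D)$ is locally free, so a nonzero global section is nonzero over $V$, and its pullback, which agrees with it over $u^{-1}(V)$, is nonzero in the torsion--free sheaf $[T^m]\Omega^1(Z',D')$. For surjectivity, take $\sigma'\in H^0(Z',[T^m]\Omega^1(Z',D'))$ and transport $\sigma'|_{u^{-1}(V)}$ through $u^{-1}(V)\xrightarrow{\sim}V$ to a section $\sigma_V$ of $[T^m]\Omega^1(Z,D)$ over $V$. As $[T^m]\Omega^1(Z,D)$ is locally free on the smooth manifold $Z$ and $Z\setminus V$ has codimension $\geq 2$, Hartogs' extension gives a unique $\widetilde\sigma\in H^0(Z,[T^m]\Omega^1(Z,D))$ with $\widetilde\sigma|_V=\sigma_V$. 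By Step 1, $u^\ast\widetilde\sigma\in H^0(Z',[T^m]\Omega^1(Z',D'))$; it coincides with $\sigma'$ over the dense $u^{-1}(V)$, so $u^\ast\widetilde\sigma=\sigma'$ by torsion--freeness. Thus $u^\ast$ is bijective, and the same argument applies verbatim to $[S^m]$.

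\textbf{Main obstacle, and an alternative.} The only real content is Step 1: keeping track of pole orders under $u^\ast$ and seeing that the orbifold--morphism inequality, together with the integrality of the $a_{i\ell}$, is exactly what keeps the pulled--back tensor inside the allowed sheaf. If one prefers to avoid this bookkeeping, one can instead use the description recalled above of $[T^m]\Omega^1(Z,D)$ as the $G$--invariant part of the direct image of ordinary holomorphic $m$--tensors from a $G$--Galois Kawamata cover adapted to $(Z,D)$: build compatible Kawamata covers $W\to Z$, $W'\to Z'$ sitting in a $G$--equivariant bimeromorphic diagram $W'\to W$, pass to smooth models, invoke the classical bimeromorphic invariance of $H^0(\,\cdot\,,\otimes^m\Omega^1)$ of compact complex manifolds upstairs, and take $G$--invariants. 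That route is conceptually cleaner but costs the construction and control of the compatible covers and their resolutions, so I would present the direct argument above.
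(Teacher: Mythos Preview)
Your argument is correct and matches the paper's approach: the paper does not give a self-contained proof but defers to \cite{Ca07}, Theorem~3.5, describing it only as ``a simple application of Hartogs theorem''---precisely your Step~2---while the well-definedness of $u^*$ on orbifold tensors (your Step~1) is the content of \cite{Ca07}, Proposition~2.11, invoked elsewhere in the paper. Your explicit use of the condition $u_*(D')=D$ is legitimate and in fact necessary for the statement to hold; it is part of the intended hypothesis (cf.\ the phrase ``orbifold birational equivalence'' in Definition~\ref{deforbmorph} and Proposition~\ref{pbirinv'}).
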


Although the proof (which is a simple application of Hartogs theorem) is given there for rank one subsheaves of the orbifold differential sheaves, it immediately implies the version given here.

\begin{definition}\label{dkappasat} (\cite{Ca07}) Let $(X,D)$ be a smooth orbifold pair with $X$ connected complex projective, of dimension $n$. Let $m>0$, and $\cL\subset [\otimes^m] \Omega^1(X,D)$ be a rank-one coherent subsheaf. For each integer $k\geq 0$, let $\cL^{\otimes k,sat}\subset [\otimes^{mk}]\Omega^1(X,D)$ be the saturation of $\cL^{\otimes k}$. We then define: $$\kappa^{sat}_{D}(X,\cL):=limsup_{k\to +\infty}\{\frac{Log( h^0(X,\cL^{\otimes k,sat}))}{Log (k)}\}\in \{-\infty,1,...,n\}.$$
\end{definition} 

As actually stated in \cite{Ca07}, Theorem 3.5, p. 835, we have the following birational invariance property for rank-one subsheafs 

\begin{proposition} \label{pbirinv'}: Let $u:(X',D')\to (X,D)$ be a morphism which is an orbifold birational equivalence between two smooth projective orbifolds. Let $\cL\subset [\otimes^m]\Omega^1(X,D)$ and $\cL'\subset [\otimes^m]\Omega^1(X',D')$ be rank-one coherent subsheaves. Assume that either $\cL':=u^*(\cL)$, or that $\cL=u_*(\cL')$.
Then: $$\kappa^{sat}_D(X,\cL)=\kappa^{sat}_{D'}(X',\cL').$$\end{proposition}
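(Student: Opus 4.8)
\textbf{Proof proposal for Proposition \ref{pbirinv'}.}

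The plan is to deduce the statement from the birational invariance of spaces of orbifold tensor sections already recorded in Proposition \ref{pbirinv}, applied not to $\Omega^1$ but to the tensor powers $[\otimes^{mk}]\Omega^1$, together with the fact that $u^*$ and $u_*$ are mutually inverse on saturated rank-one subsheaves. First I would treat the case $\cL' = u^*(\cL)$. For each $k\geq 0$ the inclusion $\cL^{\otimes k,sat}\hookrightarrow [\otimes^{mk}]\Omega^1(X,D)$ pulls back to an inclusion $u^*(\cL^{\otimes k,sat})\hookrightarrow u^*([\otimes^{mk}]\Omega^1(X,D))$, and the key observation is that, since $u$ is an orbifold birational equivalence, $u^*([\otimes^{mk}]\Omega^1(X,D))$ and $[\otimes^{mk}]\Omega^1(X',D')$ agree away from the $u$-exceptional locus (which has codimension $\geq 2$ in $X'$), hence have the same reflexive hull; thus the saturation of the rank-one sheaf $u^*(\cL^{\otimes k,sat})$ inside $[\otimes^{mk}]\Omega^1(X',D')$ is well-defined and equals $(\cL')^{\otimes k,sat}$ (the saturation of $(u^*\cL)^{\otimes k}$), because saturation of a rank-one subsheaf of a reflexive sheaf commutes with pullback by a birational morphism up to the ambiguity in codimension $\geq 2$, which does not affect a saturated — hence normal — subsheaf.

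The next step is to compare the spaces of global sections. By Hartogs (exactly the mechanism invoked throughout \S3 and in the proof of Proposition \ref{pbirinv}), a section of $(\cL')^{\otimes k,sat}$ on $X'$ restricts to a section over $X'\setminus \mathrm{Exc}(u)$, which pushes down to a section of $\cL^{\otimes k,sat}$ on $X\setminus u(\mathrm{Exc}(u))$, and extends across the codimension-$\geq 2$ set $u(\mathrm{Exc}(u))$ because $\cL^{\otimes k,sat}$ is normal; conversely $u^*$ sends sections of $\cL^{\otimes k,sat}$ to sections of $(\cL')^{\otimes k,sat}$. This gives a natural isomorphism $H^0(X,\cL^{\otimes k,sat})\cong H^0(X',(\cL')^{\otimes k,sat})$ for every $k\geq 0$. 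Since $\kappa^{sat}$ is defined purely as the $\limsup$ of $\log h^0(\cdot,\cL^{\otimes k,sat})/\log k$, the equality of these dimensions for all $k$ forces $\kappa^{sat}_D(X,\cL)=\kappa^{sat}_{D'}(X',\cL')$. Finally, the case $\cL = u_*(\cL')$ reduces to the first case: one checks $u^*(u_*(\cL')) = \cL'$ as saturated rank-one subsheaves (again because $u$ is birational and both sides are normal, so they agree in codimension $1$ hence everywhere), so $\cL' = u^*(\cL)$ and the previous argument applies verbatim.

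The main obstacle — really the only substantive point — is justifying that saturation inside the two ambient sheaves $[\otimes^{mk}]\Omega^1(X,D)$ and $[\otimes^{mk}]\Omega^1(X',D')$ is compatible under $u$, i.e. that the orbifold-morphism hypothesis on $u$ is exactly what makes the natural map between these orbifold tensor sheaves an isomorphism outside codimension $\geq 2$. This is implicit in the construction of $[\otimes^m]\Omega^1$ via adapted coordinates and in the inequalities defining an orbifold birational equivalence ($u_*D' = D$ together with $t\, m_{D'}(E)\geq m_D(F)$ and, in the birational-equivalence case, the reverse on non-exceptional divisors), and it is precisely the content that Proposition \ref{pbirinv} already packages for $m=1$; I would simply point out that the identical local computation applies to $[\otimes^{mk}]$ and then that a normal rank-one subsheaf is unaffected by modifications of the ambient sheaf in codimension $\geq 2$. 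Everything else is the routine Hartogs bookkeeping that the paper has already set up as its standard device.
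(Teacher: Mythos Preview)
Your approach is essentially what the paper does: it gives no independent proof of Proposition~\ref{pbirinv'} but simply cites \cite{Ca07}, Theorem~3.5, noting (just before Proposition~\ref{pbirinv}) that the argument is ``a simple application of Hartogs theorem''. Your write-up spells out exactly this Hartogs mechanism --- pull back via the orbifold-morphism inclusion $u^*([\otimes^{mk}]\Omega^1(X,D))\subset [\otimes^{mk}]\Omega^1(X',D')$, push down over the isomorphism locus, and extend across the codimension-$\geq 2$ image $u(\mathrm{Exc}(u))$ using normality of saturated subsheaves --- so the two are the same argument.

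One slip to correct: you write that the $u$-exceptional locus ``has codimension $\geq 2$ in $X'$'' and hence that $u^*([\otimes^{mk}]\Omega^1(X,D))$ and $[\otimes^{mk}]\Omega^1(X',D')$ ``have the same reflexive hull''. Neither is true in general: $\mathrm{Exc}(u)$ is typically a divisor in $X'$, and the two locally free sheaves can genuinely differ along it (only an inclusion is guaranteed by the orbifold-morphism condition). Fortunately you do not actually use either claim: the later paragraph correctly places the Hartogs extension on the $X$ side, across $u(\mathrm{Exc}(u))$, which \emph{does} have codimension $\geq 2$; and the identification of $(\cL')^{\otimes k,sat}$ with the saturation of $u^*(\cL^{\otimes k})$ needs only that the two ambient sheaves agree generically. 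So the argument stands once you delete that parenthetical and the reflexive-hull remark.
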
 


\subsection{Lifting and descent of integral parts of orbifold tensors}

The following theorem shall be proved in the Appendix.

\begin{theorem}\label{updowndiff} Let $h:(X,D)\to (Z,D_Z)$ be a fibration which has the properties listed in Proposition \ref{propneat}. Let $m\geq 0$ be a fixed integer. To shorten the notations, write $E^m_X:=[T^m](\Omega^1(X,D))$, and $E^m_Z:=[T^m](\Omega^1((Z,D_Z))$. Then, for any $m\geq 0$:

1. $h^*(E^m_Z)\subset E_X^m$. 

2. Let $h^*(E^m_Z)^{sat}$ stand for the saturation of $h^*(E^m_Z)$ in $E^m_X$.
Then $h_*(h^*(E^m_Z))^{sat})=E^m_Z$.
\end{theorem}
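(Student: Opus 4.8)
\emph{Sketch of approach.} Statement~1 is an inclusion of subsheaves of $\otimes^m\Omega^1_X(\log\lceil D\rceil)$, and statement~2 is an equality of two subsheaves of $\otimes^m\Omega^1_Z(\log\lceil D_Z\rceil)$; both are local on $Z$, and since $E^m_X$ is locally free, $E^m_Z$ is locally free (hence reflexive), and the saturation $\mathcal G:=h^*(E^m_Z)^{sat}$ is normal (being saturated in $E^m_X$), everything may be checked in codimension one after a Hartogs-type extension. I will freely use the identities $[k(1-\tfrac1b)]=k-\lceil k/b\rceil$ and $\lceil\lceil x\rceil/a\rceil=\lceil x/a\rceil$ for positive integers $a,b$, the implication $ab'\ge b\Rightarrow a\lceil k/b\rceil\ge\lceil k/b'\rceil$ (with the evident conventions when a multiplicity equals $+\infty$), and the fact that $h$ may be assumed equidimensional --- automatic for the family of leaves of a regular foliation, the case of interest, and in general arranged by Raynaud flattening in the construction of Proposition~\ref{propneat} --- so that for every prime divisor $E\subset Z$ all components of $h^{-1}(E)$ dominate $E$.

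\emph{Part 1.} I would fix adapted local coordinates $(z_1,\dots,z_p)$ on $Z$ realizing $\lceil D_Z\rceil$ as a union of coordinate hyperplanes, $\{z_i=0\}$ having multiplicity $n_i$ in $D_Z$. The pullback of a generator of $E^m_Z$ can acquire poles only along the components of $h^{-1}(\lceil D_Z\rceil)$, so it suffices to work near a general point $\xi$ of such a component $\bar E$, with $\alpha:=\ord_{\bar E}(h^*z_i)$ and $\mu:=m_D(\bar E)$. Analytically locally at $\xi$ one can choose coordinates $w_1,\dots,w_n$ on $X$ with $w_1$ a local equation of $\bar E$, $h^*z_i=w_1^{\alpha}$ and $h^*z_l=w_l$ for the other relevant indices; this is possible even where $h$ is not a submersion, precisely because the non-submersiveness has been absorbed into the monomial $w_1^{\alpha}$. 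The pullback of a generator $z_i^{\lceil k/n_i\rceil}(\tfrac{dz_i}{z_i})^{\otimes k}\otimes(\cdots)$ then equals $\alpha^{k}w_1^{\,\alpha\lceil k/n_i\rceil-k}(dw_1)^{\otimes k}\otimes(\cdots)$, which lies in $E^m_X$ iff $\alpha\lceil k/n_i\rceil\ge\lceil k/\mu\rceil$; and this follows from the orbifold-morphism inequality $\alpha\mu\ge n_i$ (item~4 of Proposition~\ref{propneat}) together with the ceiling implication above. Normality of $E^m_X$ then yields $h^*(E^m_Z)\subseteq E^m_X$.

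\emph{Part 2.} The inclusion $E^m_Z\subseteq h_*\mathcal G$ is immediate from $h^*(E^m_Z)\subseteq\mathcal G$ and the projection formula $h_*h^*(E^m_Z)=E^m_Z$ (using $h_*\mathcal O_X=\mathcal O_Z$ and local freeness of $E^m_Z$). For the reverse inclusion I would first show $\mathcal G\subseteq\mathcal B:=h^*\big(\otimes^m\Omega^1_Z(\log\lceil D_Z\rceil)\big)$: this holds at every codimension-one point of $X$ --- at general points of the components of $h^{-1}(\lceil D_Z\rceil)$ by the computation of Part~1, and at general points of $h$-horizontal divisors and over the generic point of $Z$ because there $h$ is a submersion and $D_Z$ is locally trivial, so $\mathcal G=h^*(E^m_Z)=\otimes^m(h^*\Omega^1_Z)$ is already saturated in $E^m_X$, a tensor with no fibre differential having no pole along a horizontal divisor --- hence $\mathcal G\subseteq\mathcal B$ everywhere, $\mathcal B$ being reflexive and $\mathcal G$ normal. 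Therefore $h_*\mathcal G\subseteq h_*\mathcal B=\otimes^m\Omega^1_Z(\log\lceil D_Z\rceil)$, and we are reduced to proving $h_*\mathcal G=E^m_Z$ at the generic point $\eta_E$ of every prime divisor $E\subset Z$ (the inclusion $E^m_Z\subseteq h_*\mathcal G$, reflexivity of $E^m_Z$ and torsion-freeness of $h_*\mathcal G$ then give equality everywhere). Localizing $Z$ at $\eta_E$ (a DVR with uniformizer $z_1$) and writing $n:=m_{D_Z}(E)$, the fact that $(Z,D_Z)$ is the orbifold base of $h$ (item~3 of Proposition~\ref{propneat}), together with equidimensionality, gives $n=\min_j\{\alpha_j\mu_j\}$ over the components $\bar E_j$ of $h^{-1}(E)$, where $\alpha_j=\ord_{\bar E_j}(h^*z_1)$, $\mu_j=m_D(\bar E_j)$. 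Working at general points of each $\bar E_j$ with coordinates as in Part~1 one finds that, in the $\mathcal O_X$-basis $\{\eta_W\}$ of $E^m_X$, $h^*(E^m_Z)=\bigoplus_W\mathcal O_X\,w_1^{\,\alpha_j\lceil l(W)/n\rceil-\lceil l(W)/\mu_j\rceil}\eta_W$ over words $W$ in the $p$ pulled-back directions ($l(W)$ the number of $dw_1$'s), whose saturation in $E^m_X$ is $\bigoplus_W\mathcal O_X\,\eta_W$; writing a section of $\mathcal G$ over the DVR as $h^*\tau$ (possible since $\mathcal G\subseteq\mathcal B$) and using $\tfrac{dw_1}{w_1}=\tfrac1{\alpha_j}h^*\tfrac{dz_1}{z_1}$, membership in $\mathcal G$ forces the coefficient of $(\tfrac{dz_1}{z_1})^{\otimes l}\otimes(\cdots)$ in $\tau$ to vanish on $E$ to order $\ge\lceil\lceil l/\mu_j\rceil/\alpha_j\rceil=\lceil l/(\alpha_j\mu_j)\rceil$ for every $j$, i.e.\ to order $\ge\max_j\lceil l/(\alpha_j\mu_j)\rceil=\lceil l/n\rceil$. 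This is exactly the condition cutting out $E^m_Z$ near $\eta_E$, so $h_*\mathcal G=E^m_Z$ there, and the proof is complete.

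\emph{Main obstacle.} The delicate point is the description of the saturation $\mathcal G=h^*(E^m_Z)^{sat}$ near the multiple (ramified) fibre components $\bar E_j$, where $h$ is genuinely non-submersive: one must monomialize $h^*z_1=w_1^{\alpha_j}$ analytically-locally at a general point of $\bar E_j$, keep precise track of how the integral-part twists on $X$ and on $Z$ interfere, and crucially exploit the minimality built into the notion of orbifold base, which is exactly what makes $\max_j\lceil l/(\alpha_j\mu_j)\rceil=\lceil l/n\rceil$; replacing $D_Z$ by a larger divisor would make $h_*\mathcal G$ strictly larger than $E^m_Z$ and invalidate statement~2.
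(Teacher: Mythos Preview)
Your proof is correct and follows essentially the same route as the paper's Appendix: reduce to codimension one on $Z$, choose adapted local coordinates monomialising $h^*z_1$, and use the ceiling-function arithmetic (your identities $\lceil\lceil x\rceil/a\rceil=\lceil x/a\rceil$ and $ab'\ge b\Rightarrow a\lceil k/b\rceil\ge\lceil k/b'\rceil$ are precisely claims (1) and (2) of the paper's Lemma~\ref{luddiff}). One small correction: equidimensionality of $h$ is not ``arranged by Raynaud flattening in the construction of Proposition~\ref{propneat}''---the neat model there generally has $h$-exceptional divisors (see property~5)---but follows, as the paper does it, from discarding their codimension-two image in $Z$, which your Hartogs reduction already permits. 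The only cosmetic difference is in the endgame: you compute the vanishing-order condition on $\tau$ directly after sandwiching $\mathcal G$ inside $\mathcal B=h^*\bigl(\otimes^m\Omega^1_Z(\log\lceil D_Z\rceil)\bigr)$, whereas the paper bounds $h_*\mathcal G$ by $E^m_Z(kZ_1)$ and derives a contradiction from $k\ge 1$ using the inequality $\tau\le t_1-1$ on the component realising the minimum.
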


\begin{corollary}\label{cuddiff} In the situation of Theorem \ref{updowndiff}, for some $m>0$, let $\cL_U\subset \otimes ^m \Omega^1_U$ be a rank-one subsheaf, where $U\subset Z$ is a dense Zariski-open subset. Let $\cL\subset[\otimes ^m](\Omega^1(X,D))$ be such that $\cL_{\vert h^{-1}(U)}=h^*(\cL_U)^{sat}$. 

If $\kappa^{sat}_D(X,\cL)\geq 0$, there exists a saturated rank-one subsheaf $\cL_Z\subset  [\otimes ^m](\Omega^1(Z,D_Z))$ such that $h^*(\cL_Z)\subset \cL^{sat}$, and $\kappa^{sat}_{D_Z}(Z,\cL_Z)=\kappa^{sat}_D(X,\cL)$.

In particular, if $\kappa^{sat}_D(X,\cL)=p=dim(Z)$, then $\kappa(Z,\cL_Z)=p$, with $\cL_Z\subset [\otimes ^m](\Omega^1(Z,D_Z))$.
\end{corollary}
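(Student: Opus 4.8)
The plan is to deduce Corollary \ref{cuddiff} from Theorem \ref{updowndiff} together with the birational invariance of $\kappa^{sat}$ (Proposition \ref{pbirinv'}). First I would explain how to produce $\cL_Z$. Start from the data: a rank-one $\cL_U\subset \otimes^m\Omega^1_U$ on a dense Zariski-open $U\subset Z$, and a rank-one saturated $\cL\subset [\otimes^m](\Omega^1(X,D))$ which over $h^{-1}(U)$ agrees with $h^*(\cL_U)^{sat}$. Using part 1 of Theorem \ref{updowndiff}, $h^*(E^m_Z)\subset E^m_X$, so the saturation $h^*(E^m_Z)^{sat}$ makes sense inside $E^m_X$, and by part 2, $h_*(h^*(E^m_Z)^{sat})=E^m_Z$. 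The idea is to intersect $\cL$ (or rather $\cL^{sat}$) with $h^*(E^m_Z)^{sat}$ inside $E^m_X$. Over the generic fibre this intersection is nonzero — indeed over $h^{-1}(U)$ the sheaf $\cL$ equals $h^*(\cL_U)^{sat}\subset h^*(E^m_Z)^{sat}$, so in fact $\cL^{sat}\subset h^*(E^m_Z)^{sat}$ at least generically, hence everywhere by normality of the saturated subsheaf (Hartogs); this uses that $X$ is smooth and $E^m_X$ locally free. Therefore $\cL^{sat}$ is a rank-one subsheaf of $h^*(E^m_Z)^{sat}$, and pushing forward, $h_*(\cL^{sat})$ is a rank-one (torsion-free) subsheaf of $h_*(h^*(E^m_Z)^{sat})=E^m_Z$; define $\cL_Z$ to be its saturation in $E^m_Z=[\otimes^m](\Omega^1(Z,D_Z))$. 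By construction $h^*(\cL_Z)$ and $\cL^{sat}$ agree on $h^{-1}(U)$, hence $h^*(\cL_Z)\subset \cL^{sat}$ (again by saturatedness/Hartogs on the smooth $X$).

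Next I would compare the two Kodaira dimensions. For the inequality $\kappa^{sat}_{D_Z}(Z,\cL_Z)\leq \kappa^{sat}_D(X,\cL)$: since $h^*(\cL_Z)\subset\cL^{sat}$, one gets compatible inclusions $h^*(\cL_Z^{\otimes k,sat})\subset \cL^{\otimes k,sat}$ for all $k$ (pulling back sections of $\cL_Z^{\otimes k}$ and saturating inside $E^{mk}_X$, using part 1 of Theorem \ref{updowndiff} again for the ambient tensor sheaves), so $h^0(Z,\cL_Z^{\otimes k,sat})\leq h^0(X,\cL^{\otimes k,sat})$, whence the inequality on $\kappa^{sat}$. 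For the reverse inequality $\kappa^{sat}_D(X,\cL)\leq \kappa^{sat}_{D_Z}(Z,\cL_Z)$: every global section of $\cL^{\otimes k,sat}\subset E^{mk}_X$ lies, over $h^{-1}(U)$, in $h^*(E^{mk}_Z)$ — because over $h^{-1}(U)$ we have $\cL=h^*(\cL_U)^{sat}$, so $\cL^{\otimes k}$ and hence $\cL^{\otimes k,sat}$ is pulled back from $U$ — and then by the $h_*$-projection formula / part 2 of Theorem \ref{updowndiff} such a section descends to a section of $(\cL_Z)^{\otimes k,sat}$ on $Z$. Hence $h^0(X,\cL^{\otimes k,sat})\leq h^0(Z,\cL_Z^{\otimes k,sat})$, giving equality of the two $\kappa^{sat}$'s.

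For the last sentence, specialize to $p=\dim Z$: if $\kappa^{sat}_D(X,\cL)=p$, then $\kappa^{sat}_{D_Z}(Z,\cL_Z)=p=\dim Z$, and since $\cL_Z\subset[\otimes^m]\Omega^1(Z,D_Z)$ is a rank-one subsheaf of the orbifold tensors on a smooth orbifold pair with maximal possible $\kappa^{sat}$, by definition this maximal value is the orbifold Kodaira dimension, so $\kappa(Z,\cL_Z)=p$. (Here I am using that for a rank-one subsheaf of $[\otimes^m]\Omega^1$, $\kappa^{sat}$ is by definition computed from $h^0$ of the saturated powers, and when it equals $\dim Z$ this forces $\cL_Z$ to be big in the orbifold sense; strictly this is just unwinding Definition \ref{dkappasat}.)

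The main obstacle I expect is making the descent of sections in the reverse-inequality step rigorous over the \emph{non-submersive} locus of $h$ and over the boundary where the orbifold multiplicities enter: one has to check that a section of $\cL^{\otimes k,sat}$, which a priori only lies in $h^*(E^{mk}_Z)$ over $h^{-1}(U)$, in fact extends to a section of the \emph{orbifold} sheaf $[\otimes^{mk}]\Omega^1(Z,D_Z)$ across $D_Z$ with the correct (fractional) pole orders — this is exactly where property (3) of Proposition \ref{propneat} ($(Z,D_Z)$ is the orbifold base of $h$) and property (4) ($h$ is an orbifold morphism) are used, ensuring that pullback is compatible with the integral-part construction, and it is the content of part 2 of Theorem \ref{updowndiff}. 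Once Theorem \ref{updowndiff} is granted, the rest is a standard Hartogs-type bookkeeping argument on the smooth manifold $X$, essentially identical to the classical ($D=0$) case of descending Viehweg-Zuo-type sheaves along a fibration.
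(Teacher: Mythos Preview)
Your proposal is correct and follows the same route as the paper: the paper's entire proof reads ``Indeed, one sets $\cL_Z=h_*(\cL)^{sat}$,'' and you have supplied precisely the verifications that this one line leaves to the reader. Your construction of $\cL_Z$ as the saturation in $E^m_Z$ of $h_*(\cL^{sat})$ coincides with the paper's, and your two-inequality comparison of $\kappa^{sat}$ via pulling back and pushing down sections (using both parts of Theorem \ref{updowndiff} for all tensor powers $mk$, together with the torsion-free-quotient/Hartogs argument on the smooth $X$) is the intended unpacking.
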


\begin{proof} Indeed, one sets $\cL_Z=h_*(\cL)^{sat}$.
\end{proof}

In order to prove our isotriviality results, we need this corollary only in the special case when the 
multiplicities of $D$ are integral or infinite: indeed our orbifold structure arising from a foliation assigns integral multiplicities
to the components parameterizing the multiple fibers, and infinite multiplicities to the compactifying components. By construction it is clear that
passing to a smooth model we remain in the same special case. This case of Theorem \ref{updowndiff} and its corollary is proved in \cite{JK'}, Theorem 5.8, and 
our method here is similar; we postpone the proof to the Appendix and refer to \cite{JK'} for the time being. The main new ingredient of the proof is Lemma \ref{luddiff} permitting to deal with rational multiplicities.


\subsection{Special smooth orbifolds, proof of the isotriviality criteria.}

\begin{definition}\label{dspecorb}(\cite{Ca07}, Definition 8.1, Th\'eor\`eme 9.9) Let $(X,D)$ be a smooth connected projective\footnote{The definition makes sense in the compact K\"ahler, or even class $\cC$ case.} orbifold. We say that $(X,D)$ is `special' if, for any $p>0$, and any rank-one subsheaf $\cL\subset \Omega^p_X$, one has: $\kappa^{sat}_D(X,\cL)<p$.
\end{definition}

Let now $(X,D)$ be as in the preceding definition, and let $g:X\dasharrow Z$ be a rational dominant fibration onto a variety of dimension $p>0$ (which one may suppose smooth and projective).
We shall always implicitely replace $g:(X,D)\dasharrow Z$ by a birational smooth model $g':(X',D')\to (Z',D_{Z'})$ enjoying the properties 1-5 listed in Proposition \ref{propneat}. In order to simplify notations, we shall also denote $g:(X,D)\to (Z,D_Z)$ this new `neat' birational model. 

\begin{theorem}\label{tspecorb} Let $(X,D)$ be as in definition \ref{dspecorb}. The following properties are equivalent:

1. $(X,D)$ is special.

2. For any $p>0$ and any $g:X\dasharrow Z$ as above, $\kappa(Z,K_Z+D_Z)<p.$

3. For any $p>0$ and any $g:X\dasharrow Z$ as above, $\kappa^{sat}_D(X,g^*(K_Z))<p.$

4. For any $p>0,$ for any $m>0$, for any $g:X\dasharrow Z$ as above, and for any coherent rank-one $\cL_Z\subset \otimes^m \Omega^1_Z$, one has $\kappa^{sat}_D(X,g^*(\cL_Z))<p.$

5. For any $p>0,$ for any $m>0$, for any $g:X\dasharrow Z$ as above, and for any rank-one coherent $\cL\subset [\otimes^m] \Omega^1(X,D)$ such that $\cL_{\vert g^{-1}(U)}=g^*(\cL_{U})$ for some Zariski open subset $U\subset Z$ and  some $\cL_{U}\subset g^*(\otimes^m \Omega^1_U)$, one has $\kappa^{sat}_D(X,g^*(\cL_Z))<p$ for $\cL_Z$ as in Corollary \ref{cuddiff}. 
\end{theorem}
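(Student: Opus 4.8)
The plan is to prove the cycle of equivalences $1 \Leftrightarrow 2 \Leftrightarrow 3$ first, using only birational orbifold geometry already set up, and then fold in $4$ and $5$ using the lifting/descent theorem \ref{updowndiff} and its Corollary \ref{cuddiff}. For $1 \Rightarrow 5 \Rightarrow 4 \Rightarrow 3$ and $3 \Rightarrow 2 \Rightarrow 1$ this gives a clean loop. The implication $5 \Rightarrow 4$ is essentially tautological: given $g:(X,D)\dasharrow (Z,D_Z)$ neat and $\cL_Z \subset \otimes^m\Omega^1_Z$, set $\cL := g^*(\cL_Z)^{sat}$ inside $[\otimes^m]\Omega^1(X,D)$ (the inclusion $g^*(\otimes^m\Omega^1_Z) \subset [\otimes^m]\Omega^1(X,D)$ holds because $g$ is an orbifold morphism, cf. part 1 of \ref{updowndiff}); then $\cL$ satisfies the hypothesis of item 5 with $U$ the locus where $g$ is submersive, and $\cL_Z$ recovered via Corollary \ref{cuddiff} agrees with the original $\cL_Z$ up to saturation, so $\kappa^{sat}_D(X,g^*(\cL_Z)) < p$ is exactly the conclusion of 5. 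The implication $4 \Rightarrow 3$ is the special case $\cL_Z = K_Z \subset \Omega^p_Z = \wedge^p\Omega^1_Z \subset \otimes^p\Omega^1_Z$ (using $m = p$), noting $\kappa^{sat}_D(X, g^*(K_Z))$ is what item 3 controls.

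For $3 \Rightarrow 2$: by part 2 of Theorem \ref{updowndiff} applied with $\cL = g^*(K_Z)^{sat}$, the descended sheaf is $\cL_Z = K_Z$ itself (one must check $K_Z$ is already saturated in $[\otimes^p]\Omega^1(Z,D_Z)$, which follows since $(Z,D_Z)$ is smooth and $\wedge^p[\otimes^1]\Omega^1(Z,D_Z)$ has $K_Z + \lceil D_Z \rceil$-twisted top exterior power — here one needs to identify $\det[\otimes^1]\Omega^1(Z,D_Z) = K_Z + \lceil D_Z\rceil$ and more precisely that the relevant rank-one saturated subsheaf of $[\otimes^p]\Omega^1(Z,D_Z)$ through $K_Z$ computes $\kappa(Z,K_Z+D_Z)$, not just $\kappa(Z,K_Z)$ — this is the standard comparison between $\kappa^{sat}_{D_Z}$ of the orbifold canonical sheaf and the orbifold Kodaira dimension $\kappa(Z,K_Z+D_Z)$, and is where the fractional-multiplicity bookkeeping from \cite{Ca07} enters). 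Granting that identification, $\kappa^{sat}_D(X,g^*(K_Z)) = \kappa(Z,K_Z+D_Z)$ by Corollary \ref{cuddiff}, so 3 and 2 are literally the same inequality. For $2 \Rightarrow 1$: a rank-one $\cL \subset \Omega^p_X$ on $X$ (with trivial or the given orbifold divisor) with $\kappa^{sat}_D(X,\cL) = p$ defines, via the Bogomolov–Castelnuovo–de Franchis type argument adapted to the orbifold setting (as in \cite{Ca07}), a dominant meromorphic map $g:X\dasharrow Z$ with $\dim Z = p$ through which $\cL$ factors, and then $\kappa(Z,K_Z+D_Z) \geq \kappa^{sat}_D(X,\cL) = p$, contradicting 2; conversely if $\kappa^{sat}_D(X,\cL) > p$ one intersects with a general pencil to reduce $p$. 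Finally $1 \Rightarrow 5$: given $\cL \subset [\otimes^m]\Omega^1(X,D)$ as in 5 with $\kappa^{sat}_D(X,\cL) \geq 0$, Corollary \ref{cuddiff} produces $\cL_Z$ with $\kappa^{sat}_{D_Z}(Z,\cL_Z) = \kappa^{sat}_D(X,\cL)$; we must show this is $< p$. If it equalled $p$, then by Corollary \ref{cuddiff} again $\kappa(Z,\cL_Z) = p$ with $\cL_Z \subset [\otimes^m]\Omega^1(Z,D_Z)$, and then — passing to a Kawamata cover $\pi: Z_{D_Z} \to Z$ adapted to $(Z,D_Z)$ — $\pi^*\cL_Z$ pulls back to a rank-one subsheaf of $\otimes^m\Omega^1_{Z_{D_Z}}$ of maximal Kodaira dimension $p = \dim Z_{D_Z}$, forcing (by the classical result that a variety carrying such a sheaf is of general type, or by the tensor-form version of BCdF) that $Z_{D_Z}$ and hence the orbifold $(Z,D_Z)$ is of general type, which contradicts 1 applied to the composite $X \dasharrow Z$.

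The main obstacle I expect is the bridge between item 5 (a statement about rank-one subsheaves of orbifold \emph{tensor} powers $[\otimes^m]\Omega^1(X,D)$, possibly not coming from $\wedge^p$) and item 1 (about subsheaves of $\Omega^p_X$): one needs a Bogomolov–Castelnuovo–de Franchis–type factorization theorem valid for an arbitrary maximal-Kodaira-dimension rank-one subsheaf of a symmetric or tensor power of the cotangent bundle, stating that it is pulled back from the base of a fibration of the appropriate dimension. For the pure projective case this is available (Bogomolov, and the orbifold version in \cite{Ca07}, with the refinement in \cite{CP15}); the care needed here is that the sheaf lives in $[\otimes^m]\Omega^1(X,D)$ with a genuine orbifold divisor $D$ of possibly \emph{fractional} multiplicities, so one must invoke the Kawamata-cover description $[\otimes^m]\Omega^1(X,D) = [\pi_*(\otimes^m\Omega^1(X,D))]^G$ to transfer the factorization statement upstairs to $X_D$ and then descend it $G$-equivariantly. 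A secondary technical point is ensuring all the maps $g:X\dasharrow Z$ can be simultaneously replaced by neat models (properties 1–5 of Proposition \ref{propneat}) compatibly with the sheaves involved, so that Theorem \ref{updowndiff} and Corollary \ref{cuddiff} apply verbatim; this is routine given the birational invariance Propositions \ref{pbirinv} and \ref{pbirinv'}, but must be stated carefully to avoid circularity. Everything else is a matter of chasing the definition of $\kappa^{sat}_D$ through the lifting/descent isomorphisms, which I would not spell out in full.
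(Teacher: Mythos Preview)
Your approach is essentially the same as the paper's: the equivalence $1\Leftrightarrow 2\Leftrightarrow 3$ is taken from \cite{Ca07} (Th\'eor\`emes 9.9 and 5.3), the equivalence $4\Leftrightarrow 5$ comes from Corollary \ref{cuddiff}, and the bridge to tensor powers (the paper's $3\Rightarrow 4$, your $1\Rightarrow 5$) runs through Corollary \ref{cuddiff} plus the fact that a rank-one subsheaf of $[\otimes^m]\Omega^1(Z,D_Z)$ of maximal saturated Kodaira dimension forces $(Z,D_Z)$ to be of general type. One correction of emphasis: that last fact is \emph{not} ``classical'' and is not a formal consequence of Bogomolov--Castelnuovo--de Franchis; it is precisely \cite{CP15}, Theorem 7.11, which the paper cites explicitly at this step and which is the only genuinely hard input in the whole proof---your Kawamata-cover reduction does not sidestep it, since the statement for $\otimes^m\Omega^1$ on a smooth projective variety is already the content of \cite{CP15}.
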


\begin{proof} The equivalence between properties 1,2,3 is established in \cite{Ca07}, Th\'eor\`emes 9.9 and 5.3. The implication $4\Rightarrow 3$ is immediate. The reverse implication follows from \cite{CP15}, Theorem 7.11, by a contradiction argument applied to $(Z,D_Z)$, together with Corollary \ref{cuddiff}, last assertion. The equivalence between properties 4 and 5 follows from Corollary \ref{cuddiff}. \end{proof}

An important example of special smooth orbifold is given by the following:

\begin{theorem}\label{k=0spec} (\cite{Ca07}, Th\'eor\`eme 7.7)) Let $(X,D)$ be a smooth projective connected orbifold such that $\kappa(X,K_X+D)=0$. Then $(X,D)$ is special.
\end{theorem}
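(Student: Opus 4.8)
The plan is to argue by contradiction: suppose $(X,D)$ is not special, so there is some $p>0$, some integer $m>0$, and a rank-one coherent subsheaf $\cL\subset\Omega^p_X$ with $\kappa^{sat}_D(X,\cL)=p$ (one may take the saturation of $\cL$ in $\Omega^p_X$, and by the birational invariance in Proposition \ref{pbirinv'} work on a convenient smooth model). The strategy is to use the non-vanishing of orbifold sections produced by $\cL$ to build a fibration $g:X\dasharrow Z$ onto a $p$-dimensional base whose orbifold base $(Z,D_Z)$ has $\kappa(Z,K_Z+D_Z)=p$, i.e. is of general type; the equivalence $1\Leftrightarrow 2$ in Theorem \ref{tspecorb} then exhibits $(X,D)$ as non-special by a different route, but the real contradiction comes from comparing canonical bundles: a fibration from $(X,D)$ with $\kappa(X,K_X+D)=0$ cannot have an orbifold base of general type, because the orbifold-base construction together with additivity/subadditivity of Kodaira dimension (the orbifold $C_{n,m}$-type inequality, \cite{Ca07}) forces $\kappa(X,K_X+D)\ge\kappa(Z,K_Z+D_Z)$, which would give $0\ge p>0$.

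Concretely, the steps I would carry out are: (i) from $\kappa^{sat}_D(X,\cL)=p$ with $\cL\subset\Omega^p_X$, invoke the ``Kodaira-dimension-attaining subsheaf of $\Omega^p$'' machinery of \cite{Ca07} (the orbifold version of the Bogomolov–Castelnuovo–de Franchis type statement, Théorème 7.7's proof uses exactly this) to produce a dominant meromorphic fibration $g:X\dasharrow Z$ with $\dim Z=p$ such that, after replacing $g$ by a neat model as in Proposition \ref{propneat}, the saturated pullback $g^*(K_Z)$ (inside $\Omega^p_X$, hence inside $[\otimes^p]\Omega^1(X,D)$) accounts for the positivity, giving $\kappa^{sat}_D(X,g^*(K_Z))=p$; (ii) by Corollary \ref{cuddiff} this descends to a rank-one $\cL_Z\subset\Omega^p_Z\subset[\otimes^p]\Omega^1(Z,D_Z)$ with $\kappa(Z,\cL_Z)=p$, but since $\cL_Z$ comes from $K_Z$ twisted by the orbifold boundary $D_Z$, one gets $\kappa(Z,K_Z+D_Z)=p$, i.e. $(Z,D_Z)$ is of general type; (iii) finally apply the orbifold additivity inequality $\kappa(X,K_X+D)\ge \kappa(Z,K_Z+D_Z)+\kappa\big(\text{generic orbifold fibre}\big)\ge p$, contradicting $\kappa(X,K_X+D)=0$.

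The main obstacle is step (i): extracting an honest fibration from a merely positive rank-one subsheaf of $\Omega^p_X$ in the orbifold setting. In the non-orbifold case this is the content of the Bogomolov–Castelnuovo–de Franchis–type results and their strengthenings; in the orbifold case one needs the corresponding statement for $[\otimes^m]\Omega^1(X,D)$, which is where \cite{Ca07}, Théorème 7.7 (and the surrounding Théorèmes 5.3, 9.9) does the heavy lifting, ultimately reducing via Kawamata covers to the classical statement. I would therefore not reprove this but cite \cite{Ca07} directly — indeed the cleanest write-up is simply to invoke Théorème 7.7 of \cite{Ca07} verbatim, since $(X,D)$ here is a smooth projective orbifold with $\kappa(X,K_X+D)=0$, which is precisely its hypothesis; the only thing to check is that our normalization of $\kappa^{sat}_D$ and of orbifold morphisms matches theirs, which it does by construction in \S\ref{sorbgeom}. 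A secondary technical point is ensuring the descended $\cL_Z$ is genuinely $K_Z+D_Z$ and not a proper sub-line-bundle of a tensor power; this is handled by the last assertion of Corollary \ref{cuddiff} together with the fact that a rank-one subsheaf of $\Omega^p_Z$ of Kodaira dimension $p=\dim Z$ must be (a saturation of) $K_Z$ plus an effective orbifold correction, again as in \cite{Ca07}.
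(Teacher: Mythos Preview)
The paper does not supply its own proof of this theorem: it is stated with the attribution ``(\cite{Ca07}, Th\'eor\`eme 7.7)'' and used as a black box. Your proposal correctly converges on the same conclusion, namely that the right thing to do here is simply to invoke \cite{Ca07}, Th\'eor\`eme 7.7, after checking that the definitions of specialness and of $\kappa^{sat}_D$ agree with those in \cite{Ca07} (they do, by construction in \S\ref{sorbgeom}).

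Your surrounding sketch of the argument is broadly faithful to how \cite{Ca07} proceeds (non-special $\Rightarrow$ fibration onto an orbifold base of general type via Theorem \ref{tspecorb}, then a Kodaira-dimension lower bound contradicting $\kappa(X,K_X+D)=0$). One caution about your step (iii): the inequality you write, $\kappa(X,K_X+D)\ge\kappa(Z,K_Z+D_Z)+\kappa(\text{generic orbifold fibre})$, is the orbifold $C_{n,m}$ and would require knowing the fibre term is $\ge 0$, which is not free. What is actually used in \cite{Ca07} is the simpler (and unconditional) inequality $\kappa(X,K_X+D)\ge\kappa(Z,K_Z+D_Z)$ for a fibration onto its orbifold base, which follows from the inclusion $g^*(K_Z+D_Z)\subset$ (saturation in) $\Omega^p(X,D)$ together with an elementary pluricanonical comparison; no fibre contribution is needed. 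Since your actual recommendation is to cite \cite{Ca07}, this does not affect the correctness of your proposal.
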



\begin{corollary} Let $f:X\to B$ be a projective fibration between two connected quasi-projective varieties, $X$ smooth, $B$ normal. Assume that $f$ has equidimensional connected fibres. Let $\overline{f}:(\overline{X},\overline{D})\to (\overline{B}, \overline{D_{\overline{B}}})$ be any resolution of $f:X\to (B,D_B)$ (see definition \ref{dresolution} above). We shall say that $(B,D_B)$ is special if so is $(\overline{B}, \overline{D_{\overline{B}}})$. This does not depend on the choice of $\overline{f}:(\overline{X},\overline{D})\to (\overline{B}, \overline{D_{\overline{B}}})$.

We then have, for  $(\overline{B}, \overline{D_{\overline{B}}})$, the equivalence between the 5 properties listed in Theorem \ref{tspecorb}.

Assume in particular that $(\overline{B}, \overline{D_{\overline{B}}})$ is special. Let $g:\overline{B}\dasharrow Z$ be a fibration with $dim(Z)=p$, and assume the existence of $\cL\subset [\otimes^m]\Omega^1(\overline{X},\overline{D})$ with $\kappa^{sat}_{\overline{D}}(\overline{X},\cL)=p$.
If there is a $\cL_U\subset \otimes^m \Omega^1_{U}$ for some Zariski dense open subset $U\subset Z$ such that $\cL_{\vert (g\circ \overline{f})^{-1}(U)}=(g\circ \overline{f})^*(\cL_U)^{sat}$, then $p=0$. 
\end{corollary}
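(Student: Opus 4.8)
The statement has three parts, and I would treat them in turn. For the first — that specialness of $(\overline{B},\overline{D_{\overline{B}}})$ is independent of the chosen resolution — the plan is to combine Corollary \ref{rbireq}, which says that any two compactified resolutions of $(B,D_B)$ are birationally equivalent in the orbifold sense, with the orbifold-birational invariance of specialness. The cleanest way to see the latter is via the equivalence $1\Leftrightarrow 2$ of Theorem \ref{tspecorb}: property~$2$ reads ``$\kappa(Z,K_Z+D_Z)<\dim Z$ for every fibration $g:\overline{B}\dasharrow Z$ with $\dim Z>0$'', and both the family of such fibrations (up to birational modification of $Z$) and the orbifold bases $D_Z$ are carried compatibly by an orbifold birational equivalence (Proposition \ref{pbirinv'} and \cite{Ca07}). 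Since, by its construction in \S\ref{sorbgeom}, each $(\overline{B},\overline{D_{\overline{B}}})$ is a smooth connected projective orbifold, the second part — the equivalence of the five properties — is then just Theorem \ref{tspecorb} applied to it verbatim.

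For the main implication I would argue by contradiction: assume $(\overline{B},\overline{D_{\overline{B}}})$ special and $p=\dim Z\ge 1$. After shrinking $U$ so that $g$ is submersive over it, $\cL_U$ yields a rank-one subsheaf $\cL_V:=g^*(\cL_U)\subset\otimes^m\Omega^1_V$ over the dense Zariski-open $V:=g^{-1}(U)\subset\overline{B}$, and, since $(g\circ\overline{f})^*(\cL_U)=\overline{f}^*(\cL_V)$, the hypothesis becomes $\cL_{|\overline{f}^{-1}(V)}=\overline{f}^*(\cL_V)^{sat}$. The next step is to descend $\cL$ along $\overline{f}$: recall that $\overline{f}:(\overline{X},\overline{D})\to(\overline{B},\overline{D_{\overline{B}}})$ is an orbifold morphism to its orbifold base, so Corollary \ref{cuddiff} applies (after, if necessary, passing to a neat model — see below), and since $\kappa^{sat}_{\overline{D}}(\overline{X},\cL)=p\ge 0$ it produces a saturated rank-one $\cL_{\overline{B}}:=\overline{f}_*(\cL)^{sat}\subset[\otimes^m]\Omega^1(\overline{B},\overline{D_{\overline{B}}})$ with $\overline{f}^*(\cL_{\overline{B}})\subset\cL^{sat}$ and $\kappa^{sat}_{\overline{D_{\overline{B}}}}(\overline{B},\cL_{\overline{B}})=\kappa^{sat}_{\overline{D}}(\overline{X},\cL)=p$. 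Because $\overline{f}$ has connected fibres, $\overline{f}_*\cO_{\overline{X}}=\cO_{\overline{B}}$, and a projection-formula computation over $\overline{f}^{-1}(V)$ shows that $\cL_{\overline{B}}$ agrees with $g^*(\cL_U)^{sat}$ on a dense open of $V$; in other words, $\cL_{\overline{B}}$ again pulls back from $Z$ through $g$ over a dense Zariski-open.

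Now $g:\overline{B}\dasharrow Z$, $\cL_{\overline{B}}$ and $\cL_U$ meet exactly the hypotheses of property~$5$ of Theorem \ref{tspecorb}, with $\dim Z=p>0$; since $(\overline{B},\overline{D_{\overline{B}}})$ is special, this will be the contradiction. Concretely, I would run Corollary \ref{cuddiff} a second time, for $g:(\overline{B},\overline{D_{\overline{B}}})\to(Z,D_Z)$ with $D_Z$ the orbifold base of $g$: since $\kappa^{sat}_{\overline{D_{\overline{B}}}}(\overline{B},\cL_{\overline{B}})=p=\dim Z$, its last assertion gives a rank-one $\cL_Z\subset[\otimes^m]\Omega^1(Z,D_Z)$ with $g^*(\cL_Z)\subset\cL_{\overline{B}}^{sat}$ and $\kappa^{sat}_{D_Z}(Z,\cL_Z)=p$. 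Pull-back of pluri-sections being injective, $\kappa^{sat}_{\overline{D_{\overline{B}}}}(\overline{B},g^*(\cL_Z))\ge\kappa^{sat}_{D_Z}(Z,\cL_Z)=p$, whereas property~$5$ asserts $\kappa^{sat}_{\overline{D_{\overline{B}}}}(\overline{B},g^*(\cL_Z))<p$; hence $p=0$. (An equivalent ending: $\kappa^{sat}_{D_Z}(Z,\cL_Z)=\dim Z>0$ forces, by \cite{CP15}, Theorem 7.11, the orbifold base $(Z,D_Z)$ of $g$ to be of log-general type, which contradicts property~$2$; or, shortcutting the first descent, one may descend $\cL$ directly along $g\circ\overline{f}$ and invoke the compatibility of the orbifold base of $g\circ\overline{f}$ with that of $g$ relative to $\overline{D_{\overline{B}}}$.)

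The step I expect to be the main obstacle is the reduction to \emph{neat} models: Corollary \ref{cuddiff} and Theorem \ref{updowndiff} are phrased for fibrations satisfying the conditions of Proposition \ref{propneat}, so whenever the model at hand ($\overline{f}$, or $g$) does not literally satisfy them one must first replace it by a neat model from Proposition \ref{propneat}, and then check that nothing is lost. Over source and target such a replacement is an orbifold birational equivalence (Lemma \ref{lemorbequiv}, Corollary \ref{rbireq}, Example \ref{exorbequiv}), so Proposition \ref{pbirinv'} keeps $\kappa^{sat}$ of $\cL$ and of $\cL_{\overline{B}}$ unchanged, it keeps the pull-back description of these sheaves over dense Zariski-opens, and — by the independence established in the first part — it does not disturb specialness. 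Carrying out this bookkeeping carefully (and, for the shortcut ending, verifying the composition formula for orbifold bases) is really the only substantive point; the remainder is a straightforward chaining of Corollary \ref{cuddiff} and Theorem \ref{tspecorb}.
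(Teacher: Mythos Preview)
Your proposal is correct and follows exactly the line the paper intends. The paper states this corollary without proof, treating it as an immediate consequence of Corollary~\ref{rbireq} (for the independence in part~1), Theorem~\ref{tspecorb} (for part~2), and Corollary~\ref{cuddiff} combined with property~5 of Theorem~\ref{tspecorb} (for part~3); your descent of $\cL$ along $\overline{f}$ followed by an appeal to property~5 is precisely this, and your ``second descent'' along $g$ merely unpacks what property~5 already encodes.
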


Notice that Lemma \ref{factor} implies that the specialness of $(B, D_B)$ in the sense of the last corollary is the same as the specialness of the orbifold base defined in \S \ref{sspec}.

\begin{corollary} Let $f:X\to B$ be the fibration associated to an everywhere regular and algebraic foliation $\cF$ on a connected quasi-projective manifold $X$. Assume that the fibres of $f$ are (projective and) canonically polarised (or assume that the fibres of $f$ have trivial canonical bundle). If the orbifold base $(B,D_B)$ of $f$ is special, then $f$ is isotrivial. 

If $\kappa(\overline{X}, det(\Omega^1_{\overline{X}/\overline{\cF}}))=0$, then $f$ is isotrivial. 
\end{corollary}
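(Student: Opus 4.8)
This statement is essentially Theorem~\ref{tisot} (first assertion) together with Theorem~\ref{tk=0} (second assertion), once one notes, via Lemma~\ref{factor}, that specialness of $(B,D_B)$ in the sense of the preceding Corollary coincides with specialness of the orbifold base of $f$ in the sense of Definition~\ref{dspec}; so the plan is to prove the isotriviality statement directly, arguing by contradiction from the assumption $Var(f)=p>0$. First I would run the tautological base change of \S\ref{sbc}: setting $\beta=f\colon X\to B$ and normalising $X\times_BX$, Lemma~\ref{lbc} produces a \emph{submersive} fibration $f_X\colon X_X\to X$ whose fibres are canonically polarised (resp. have trivial canonical bundle), and Lemma~\ref{lbc'} shows that its moduli map factors through a holomorphic map $\mu\colon B\to Mod$. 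Over the open locus $B^*\subset B$ where $f$ is submersive the holonomy is trivial, so $f_X$ and $f$ have the same fibres over $f^{-1}(B^*)$; hence the image of the moduli map of $f_X$ equals the image $M$ of the moduli map of $f$, and $Var(f_X)=\dim M=Var(f)=p$.

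Since $f_X$ is a smooth family of canonically polarised manifolds over the smooth quasi-projective base $X$, Theorem~\ref{vz} together with the Jabbusch--Kebekus refinement supplies, on a compactified resolution $\overline f\colon(\overline X,\overline D)\to(\overline B,\overline{D_{\overline B}})$ of $f\colon X\to(B,D_B)$ as in Definition~\ref{dresolution}, a Viehweg--Zuo sheaf: a line bundle $\cL\subset Sym^m(\mu_X^*(\Omega^1_M))^{sat}\subset Sym^m(\Omega^1_{\overline X}(Log(\overline D)))=[\otimes^m]\Omega^1(\overline X,\overline D)$ with $\kappa(\overline X,\cL)\geq Var(f_X)=p$. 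When the fibres have trivial canonical bundle this $\cL$ is provided instead by \cite{BPW17}, Theorem~9.9 (see Remark~\ref{explic-CY}), and the rest of the argument is unchanged.

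Next I would descend $\cL$ to the orbifold base. Because $\mu_X=\mu\circ f$ factors through $f$, there is a fibration $g\colon\overline B\dasharrow Z$ onto a smooth projective model $Z$ of $M$, with $\dim Z=p$, such that, writing $h=g\circ\overline f$, one has $\cL_{|h^{-1}(U)}=h^*(\cL_U)^{sat}$ over a dense open $U\subset Z$ for some rank-one $\cL_U\subset\otimes^m\Omega^1_U$. As $\cL$ then lies in $Sym^m$ of a pullback from the $p$-dimensional $Z$, Corollary~\ref{cuddiff} (together with Definition~\ref{dkappasat}, which bounds $\kappa^{sat}_{D_Z}(Z,\cL_Z)\leq\dim Z$) forces $\kappa^{sat}_{\overline D}(\overline X,\cL)\leq p$, hence $\kappa^{sat}_{\overline D}(\overline X,\cL)=p$. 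Applying the preceding Corollary to $g\colon\overline B\dasharrow Z$ and this $\cL$, and using that $(B,D_B)$ is special, we conclude $p=0$, a contradiction; therefore $f$ is isotrivial. For the last assertion, $\kappa(\overline X,\det(\Omega^1_{\overline X/\overline\cF}))=0$ implies that $(B,D_B)$ is special by Theorem~\ref{tk=0} (which rests on Theorem~\ref{k=0spec}), so the first part applies.

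The step I expect to be the main obstacle is the descent of the Viehweg--Zuo sheaf from $\overline X$ to the orbifold base $(\overline B,\overline{D_{\overline B}})$: the sheaf $\cL$ a priori lives only in logarithmic differentials of $\overline X$ relative to the compactifying divisor, whereas the orbifold differentials of the base must also account for the integral multiplicities of the multiple fibres of $f$. Controlling this requires precisely the lifting/descent machinery of Theorem~\ref{updowndiff} and Corollary~\ref{cuddiff}, the fact that all compactified resolutions are orbifold birationally equivalent (Lemma~\ref{lemorbequiv}, Corollary~\ref{rbireq}), and the Jabbusch--Kebekus fact that the Viehweg--Zuo sheaf genuinely originates from the moduli image rather than merely sitting inside symmetric differentials of $\overline X$.
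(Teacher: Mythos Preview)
Your proposal is correct and follows essentially the same route as the paper: base-change via \S\ref{sbc} to get a submersive family, invoke the Viehweg--Zuo sheaf (or \cite{BPW17} in the trivial canonical bundle case) together with the Jabbusch--Kebekus refinement so that $\cL$ comes from the moduli image, use Lemma~\ref{lbc'} to factor the moduli map through $B$, descend via Corollary~\ref{cuddiff}, and conclude $\dim Z=0$ from specialness (then Theorem~\ref{k=0spec} for the last assertion). Your contradiction framing and your explicit justification of $\kappa^{sat}_{\overline D}(\overline X,\cL)=p$ are slightly more detailed than the paper's terse argument, but the substance is identical.
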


\begin{proof} Indeed, consider the smooth base-changed family over $X$ as in \S \ref{sbc}. There is a Viehweg-Zuo sheaf $\cL\subset [Sym^m]\Omega^1_{\overline{X}}(Log(\overline{D}))$ associated to this smooth family.  
By \cite{JK}, Theorem 1.8, this sheaf possesses the property of being generically lifted from a subsheaf 
of
$\Sym^m(\Omega^1_Z)$, where $Z$ is the 
(eventually compactified and 
modified) image of the moduli map $\mu:X\to Mod$ described in \S\ref{vz}, and its Kodaira dimension is equal to the 
dimension of $Z$. But by Lemma \ref{lbc'}, the map $\mu$ factors through $B$, and so generically on $B$ there is 
another subsheaf $\cL_U$ of the symmetric differentials which lifts to $\cL$ over an open subset. Now apply 
Corollary \ref{cuddiff} to extend it to the sheaf $\cL_{\overline{B}}$ of saturated Kodaira dimension equal to $dim(Z)$.
The speciality of $B$ implies $dim(Z)=0$. This establishes the first claim. The second one then follows from Theorem \ref{k=0spec}.
\end{proof}



\section{Two examples}

\subsection{Coisotropic submanifolds}

Let $X\subset Y$ be a compact complex submanifold of a compact connected K\"ahler manifold $Y$ of dimension $n=2m$ carrying a holomorphic symplectic $2$-form $s$. We say that $X$ is {\it coisotropic} (relatively to $s$) if, for any $x\in X$, the complex tangent space $T_xX$ to $X$ at $x$ contains its $s$-orthogonal. This defines an everywhere regular rank $r$ foliation $\cF$ on $X$, where $r$ is the codimension of $X$ in $Y$. This foliation is often called {\it characteristic foliation}. 

Every smooth divisor $X\subset Y$ is coisotropic, with $r=1$, so that it carries the characteristic foliation
of rank one. This was the case studied in \cite{AC}. 

If $X$ is coisotropic, we have: $2m-2r\geq 0$, and $dim(X)=2m-r\geq r=codim_Y(X)$. If $r=m$, $X$ is said to be Lagrangian. A somehow ``dual'' case is when $X$ is isotropic (that is, when $s$ vanishes on $T_xX\ \forall x\in X$). Thus Lagrangian means both isotropic and coisotropic.

Let $X\subset Y$ and $s$ be as above, with $X$ coisotropic. We say that $X$ is `algebraically coisotropic' if the characteristic foliation $\cF$ is algebraic. Such subvarieties appear in the study of ``subvarieties of constant cycles'' on holomorphically symplectic varieties, but one has to drop the smoothness assumption (see \cite{V}). 

One of our main motivations for this paper was to generalize the results of \cite{AC}, where we have proved 
that the fibration associated to the characteristic foliation on an algebraically coisotropic smooth divisor
is always isotrivial in the projective case, and deduced from this that on an irreducible holomorphically 
symplectic projective manifold $Y$, 
there are no non-uniruled smooth algebraically coisotropic divisors $X$ except in the trivial case when $Y$ is a 
K3 surface and $X$ is a curve.

The natural question for higher codimension is as follows: let $Y$ be an 
irreducible holomorphically symplectic manifold and $X\subset Y$ a non-uniruled algebraically coisotropic 
submanifold. Can one conclude that $X$ is lagrangian? 

\medskip

Our study provides some evidence for the affirmative answer, however the results are still extremely partial.
For instance, one has the following.

\begin{corollary} Let $X$ be a projective manifold of dimension $d$ with an everywhere regular algebraic foliation $\cF$ of rank $r$ whose leaves are canonically polarised (or have trivial canonical bundle). If $\cF=Ker(s)$, where $s$ is a section $s$ of $\Omega_X^{d-r}\otimes L$, with $L\in Pic(X)$ and $c_1(L)=0$, then $\cF$ is isotrivial. Moreover, $\kappa(X)=r$ in the canonically polarized case and $0$ in the trivial canonical bundle case.
\end{corollary}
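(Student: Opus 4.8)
The strategy is to apply the preceding corollary (the isotriviality criterion via specialness of the orbifold base) together with Theorem \ref{tk=0}, which guarantees specialness as soon as $\kappa(\overline{X},\det(\Omega^1_{\overline{X}/\overline{\cF}}))=0$. Since $X$ here is already projective and compact, there is no boundary divisor: $\overline{X}=X$, and the Log-conormal sheaf $\Omega^1_{\overline{X}/\overline{\cF}}$ is just the conormal bundle $\Omega^1_{X/\cF}$ of the foliation, with $\Omega^p_{X/\cF}=\wedge^p\Omega^1_{X/\cF}$ a genuine subbundle of $\Omega^p_X$. So everything reduces to the single numerical statement $\kappa(X,\det(\Omega^1_{X/\cF}))=0$.

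The key step is to read off $\det(\Omega^1_{X/\cF})$ from the section $s\in H^0(X,\Omega^{d-r}_X\otimes L)$. By hypothesis $\cF=\mathrm{Ker}(s)$, meaning that, viewing $s$ as a bundle map $\wedge^{d-r}T_X\to L$ (equivalently contracting, the $r$-dimensional kernel distribution is $\cF$), the section $s$ vanishes on $\wedge^{d-r}T_X$ exactly when at least one of the $d-r$ tangent vectors lies in $\cF$. Concretely, $s$ is a nowhere-vanishing section of $\Omega^{d-r}_X\otimes L$ that, at each point, lies in the rank-one subspace $\wedge^{d-r}(\Omega^1_{X/\cF})_x\otimes L_x\subset \Omega^{d-r}_{X,x}\otimes L_x$; here one uses that $\Omega^1_{X/\cF}$ is a subbundle of $\Omega^1_X$ of rank $n-r=d-r$, so $\wedge^{d-r}\Omega^1_{X/\cF}=\det(\Omega^1_{X/\cF})$ is a line subbundle of $\Omega^{d-r}_X$. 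A nowhere-zero section of a line bundle lying pointwise in a line subbundle of a larger bundle must actually be a nowhere-zero section of that line subbundle; hence $s$ trivializes $\det(\Omega^1_{X/\cF})\otimes L$, i.e. $\det(\Omega^1_{X/\cF})\cong L^{-1}$. Since $c_1(L)=0$, the line bundle $\det(\Omega^1_{X/\cF})$ has trivial first Chern class and a nonzero (indeed nowhere-vanishing) global section of its tensor with $L$, so $\kappa(X,\det(\Omega^1_{X/\cF}))=0$. By Theorem \ref{tk=0} the orbifold base of $f$ is special, and by the previous corollary (in either the canonically polarized or the trivial-canonical-bundle case, via Remark \ref{CY}) the fibration $f$ is isotrivial.

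For the final Kodaira-dimension statement, use the conormal exact sequence $0\to \Omega^1_{X/\cF}\to \Omega^1_X\to \cF^*\to 0$, which gives $K_X=\det(\Omega^1_X)=\det(\Omega^1_{X/\cF})\otimes\det(\cF^*)=L^{-1}\otimes K_{\cF}$, where $K_{\cF}=\det(\cF^*)$ is the canonical bundle of the foliation; restricted to a leaf $F_b$ this is $K_{F_b}$. In the canonically polarized case $K_{F_b}$ is ample; since $c_1(L)=0$ and, by isotriviality, $f$ is (after the base change of \S\ref{sbc}) essentially a product, $K_X$ is relatively ample of relative dimension $r$ and trivial on the base directions, whence $\kappa(X)=r$. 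In the trivial-canonical-bundle case $K_{F_b}=\cO_{F_b}$, so $K_X$ is numerically trivial along the fibres and along the (isotrivial) base, giving $\kappa(X)=0$. The main obstacle is the first paragraph's identification $\det(\Omega^1_{X/\cF})\cong L^{-1}$: one must be careful that ``$\cF=\mathrm{Ker}(s)$'' really forces $s$ to be a nowhere-vanishing section lying in the line subbundle $\det(\Omega^1_{X/\cF})\otimes L$, rather than merely containing $\cF$ in its zero locus generically; the everywhere-regularity of $\cF$ and the subbundle property of $\Omega^1_{X/\cF}$ are exactly what make this work pointwise.
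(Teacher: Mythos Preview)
Your approach is essentially the same as the paper's: identify $\det(\Omega^1_{X/\cF})\cong L^{-1}$ via the nowhere-vanishing section $s$ lying in the line subbundle $\det(\Omega^1_{X/\cF})\otimes L\subset \Omega^{d-r}_X\otimes L$, conclude that this determinant is numerically trivial, and invoke Corollary~\ref{corisot}. The paper's proof is the one-line version of exactly this (``$\det(\Omega^1_{X/\cF})$ is then numerically trivial, since generated by $s$, and Theorem~\ref{corisot} applies''); you have simply unpacked the identification more carefully and additionally sketched the $\kappa(X)$ assertion, which the paper's proof leaves unaddressed.

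One small caution shared by both your write-up and the paper: the passage from ``$c_1(\det(\Omega^1_{X/\cF}))=0$'' to ``$\kappa(X,\det(\Omega^1_{X/\cF}))=0$'' (the actual hypothesis of Corollary~\ref{corisot}) is not automatic for an arbitrary numerically trivial line bundle. Your phrasing ``has trivial first Chern class and a nonzero \dots section of its tensor with $L$, so $\kappa=0$'' does not literally justify this, and the paper glosses over it too. In practice this is harmless here (one can argue directly that a numerically trivial $\det(\Omega^1_{X/\cF})$ already forces specialness of the orbifold base, or restrict to torsion $L$), but you may want to flag that step rather than present it as immediate.
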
 

\begin{proof} Indeed, $det(\Omega^1_{X/\cF})=det(\Omega^1_{\overline{X}/\overline{\cF}})$ is then numerically trivial, since generated by $s$, and Theorem \ref{corisot} applies.
\end{proof}

A more specific example is the following (the case $r=1$ has been established in \cite{AC}). However 
 in this situation one can show, in the same way as in \cite{AC}, that the fibration associated to $\cF$ does not have
multiple fibers in codimension one, so that a simpler proof of isotriviality can be given.

\begin{example}\label{ex1} Let $X\subset Y$ be a connected projective coisotropic submanifold of codimension $r$ in a smooth projective manifold $Y$ equipped with a holomorphic symplectic $2$-form $s$. Let $\cF$ be the characteristic 
foliation on $X$ defined as $Ker(s^r)$. Assume that the leaves of $\cF$ are compact and canonically polarised. 
Then $\cF$ is isotrivial and $\kappa(X)=r$.

To answer the question raised above, one would need, e.g. in the case when $Y$ is irreducible hyperk\"ahler, 
a lower bound for Kodaira dimension of $X$: for instance $\kappa(X)\geq m$ would be sufficient to derive that $X$ is lagrangian. This is the approach from \cite{AC}, but we do not know whether it might work for higher-codimensional coisotropic subvarieties.
\end{example}

At this point we can obtain the answer only in some very particular cases.

\begin{example} In the situation of Example \ref{ex1}, assume that $X$ is of general type and $K_X$ is ample
in restriction to the leaves of $\cF$ (this is the case for instance when the normal bundle $N_{X/Y}$ is ample). Then $X$ is Lagrangian. Indeed: $\kappa(X)=dim(X)\geq m$.
\end{example}

\begin{example} In the above situation of Example \ref{ex1}, assume that $Y$ is a \it{simple} torus (rather than irreducible hyperk\"ahler). Then $X$ is Lagrangian. Indeed: $\kappa(X)=dim(X)$ since $Y$ is simple.
\end{example}

\subsection{Boundary of codimension at least $2$.}
We consider the following situation:  Let $X^+$ be an irreducible (not necessarily normal) complex projective variety of dimension $n$, let $X$ be the smooth locus of $X^+$. Assume that there exists on $X$ an everywhere non-zero $d$-closed holomorphic form $w$ of degree $m:=(n-r)$ defining an everywhere regular foliation $\cF:=Ker(u)$ with canonically polarised compact leaves of dimension $r$ on $X$, or with compact leaves with trivial canonical bundle. The $m$-form $w$ thus descends to a nowhere vanishing $m$-form $v$ on the smooth locus of $B$. Thus $v$ is a nowhere vanishing section of a suitable power $N$ of $K_B$, if $f:X\to B$ is the fibration associated to $\cF$, so that $B$ has only quotient singularities, and its canonical bundle is $\Q$-Cartier. Thus: $w=(f^*(v))^{\otimes N}$ is a generator of $(det(\Omega^1_{X/\cF}))^{\otimes N}$.

We shall assume also that $X^+\subset M$, where $M$ is a complex space such that $M^{reg}\cap X^+=X$, and that $w$ is the restriction to $X$ of a holomorphic $m$-form $\wh{w}$ on $M^{reg}$, which extends holomorphically on some (or any) resolution of the singularities of $M$. It follows that if $\delta:\overline{X}\to X^+$ is an arbitrary desingularisation, then $w$ extends to a holomorphic $m$-form $\overline{w}$ on $\overline{X}$ (by taking first an embedded resolution of the singularities of $X^+$, lifting $\wh{w}$, and then observing that the existence of $\overline{w}$ is independent of the resolution of $X^+$. It is actually sufficient for the existence of $\overline{w}$ that $w$ be induced in local embeddings of $X^+$, instead of a global one $X^+\subset M$).

\begin{proposition}\label{projective normal} Assume that $X^+,X,M, w$ are as in the above situation, and that $X=X^{+, reg}$ has complement in $X^+$ of codimension $2$ or more. If the leaves of $\cF$ on $X$ are compact and canonically polarised (or have trivial canonical bundle), then the family of leaves is isotrivial.
\end{proposition}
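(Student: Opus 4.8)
The plan is to deduce Proposition~\ref{projective normal} from the specialness criterion of Theorem~\ref{tk=0}: I will show that under the stated hypotheses $\kappa(\overline{X},\det(\Omega^1_{\overline{X}/\overline{\cF}}))=0$, so that the orbifold base of $f$ is special, and then Theorem~\ref{tisot} (together with Remark~\ref{CY} in the case of trivial canonical bundle of the leaves) yields isotriviality. Note that $X=X^{+,reg}$ is a connected quasi-projective manifold carrying the everywhere regular algebraic foliation $\cF$, so $f\colon X\to B$ fits the set-up of \S2 and the earlier theorems apply.

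First I would fix a suitable compactification. Since the normalization $\nu\colon\widetilde{X}\to X^{+}$ is an isomorphism over the regular locus, I regard $X$ as a Zariski-dense open subset of the \emph{normal} projective variety $\widetilde{X}$; because $\nu$ is finite, $\widetilde{X}\setminus X$ still has codimension $\geq 2$, and a desingularization of $\widetilde{X}$ is also one of $X^{+}$, so $w$ still extends to such a desingularization. I then choose $\delta\colon\overline{X}\to\widetilde{X}$, an isomorphism over the smooth locus, with $\overline{D}:=\overline{X}-X$ a simple normal crossing divisor and such that $f$ extends to $\overline{f}\colon\overline{X}\to\overline{B}$ with $\overline{B}$ normal and $\overline{D}=\overline{f}^{-1}(E)$ as in \S2 (achievable after further blow-ups; which model is chosen is immaterial by the birational invariance of $\kappa^{sat}$). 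The decisive feature of this choice is that, $\widetilde{X}$ being normal with $\widetilde{X}\setminus X$ of codimension $\geq 2$, the divisor $\overline{D}$ is $\delta$-exceptional.

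Next I would locate $w$ inside the Log-conormal sheaf. The conormal bundle $\Omega^1_{X/\cF}\subset\Omega^1_X$ is a rank-$m$ subbundle ($m=n-r=\dim B$), and since $\cF=\mathrm{Ker}(w)$ the $m$-form $w$ is a section of $\wedge^m\Omega^1_{X/\cF}=\det\Omega^1_{X/\cF}$; being everywhere non-zero, it trivializes this line bundle, so $\det\Omega^1_{X/\cF}\cong\cO_X$. As $\det\Omega^1_{\overline{X}/\overline{\cF}}=\Omega^m_{\overline{X}/\overline{\cF}}$ is by definition the saturation of $\det\Omega^1_{X/\cF}$ inside the locally free sheaf $\Omega^m_{\overline{X}}(\mathrm{Log}(\overline{D}))$, the extension $\overline{w}$, which lies in this subsheaf over the dense open $X$, lies in it globally (the quotient $\Omega^m_{\overline{X}}(\mathrm{Log}(\overline{D}))/\Omega^m_{\overline{X}/\overline{\cF}}$ is torsion-free, cf.\ the Hartogs-type argument behind Lemma~\ref{birinv}); hence $\overline{w}\in H^0(\overline{X},\det\Omega^1_{\overline{X}/\overline{\cF}})$. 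Its divisor $Z:=\mathrm{div}(\overline{w})\geq 0$ is supported on $\overline{D}$ because $w=\overline{w}|_X$ is nowhere zero on $X$, hence $Z$ is $\delta$-exceptional. Since $\widetilde{X}$ is normal, $\delta_*\cO_{\overline{X}}(kZ)=\cO_{\widetilde{X}}$ for every $k\geq 0$, so $h^0(\overline{X},(\det\Omega^1_{\overline{X}/\overline{\cF}})^{\otimes k})=h^0(\widetilde{X},\cO_{\widetilde{X}})=1$, that is $\kappa(\overline{X},\det\Omega^1_{\overline{X}/\overline{\cF}})=0$. Applying Theorem~\ref{tk=0} and then Theorem~\ref{tisot} (resp.\ Remark~\ref{CY}) finishes the proof.

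The only step that is not purely formal is the global identification $\overline{w}\in H^0(\overline{X},\det\Omega^1_{\overline{X}/\overline{\cF}})$: one must match the externally-given extension $\overline{w}$ (a holomorphic $m$-form coming from the ambient space $M$) with the saturation defining $\Omega^m_{\overline{X}/\overline{\cF}}$, so that the resulting zero-divisor $Z$ is genuinely $\delta$-exceptional and is not displaced by a logarithmic twist along $\overline{D}$. This is exactly the torsion-free / Hartogs argument already used repeatedly in \S3, and everything else — the reduction to the normal case via normalization, the exceptional-divisor computation of $\kappa$, and the invocation of Theorems~\ref{tk=0} and~\ref{tisot} — is routine.
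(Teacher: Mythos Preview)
Your proof is correct and follows essentially the same route as the paper's: both reduce to showing $\kappa(\overline{X},\det\Omega^1_{\overline{X}/\overline{\cF}})=0$ and then invoke Corollary~\ref{corisot}, using the extension $\overline{w}$ together with the codimension-$2$/normality (Hartogs) argument to force any section of a power of this line bundle to be a constant multiple of $\overline{w}^{\otimes m}$. Your formulation via $L\cong\cO_{\overline{X}}(Z)$ with $Z$ $\delta$-exceptional and $\delta_*\cO_{\overline{X}}(kZ)=\cO_{\widetilde{X}}$ is just a repackaging of the paper's quotient $\varphi=s/w^m$; if anything, you are more explicit than the paper in checking that $\overline{w}$ actually lands in the saturated subsheaf $\det\Omega^1_{\overline{X}/\overline{\cF}}\subset\Omega^m_{\overline{X}}(\mathrm{Log}\,\overline{D})$.
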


\begin{proof} Let $f:X\to B$ be the proper connected fibration associated to $\cF$ on $X$. This fibration extends naturally to a fibration $\overline{f}:\overline{X}\to \overline{B}$ where $\overline{B}$ is the normalisation of the (projective) closure in the Chow-Barlet space of $X^+$ of $f(X)$. Theorem \ref{tisot} shows that we only need to show that $\kappa:=\kappa(\overline{X},det(\Omega^1_{\overline{X}/\overline{\cF}}))=0$ to prove the claim. But the restriction to $X$ of $det(\Omega^1_{\overline{X}/\overline{\cF}})$ is $det(\Omega^1_{X/\cF})$, which is generated by $w$, and hence trivial. Because $w$ extends to $\overline{w}$, we have $\kappa\geq 0$. Let now $\overline{s}$ be a section of $det(\Omega^1_{\overline{X}/\overline{\cF}})^{\otimes m},$ for some $m>0$. Let $s$ be its restriction to $X$. The quotient $\varphi:=\frac{s}{w^m}$ thus defines a holomorphic function on $X$. Because $codim_{X^+}(X^+-X)\geq 2$, $\varphi$ extends as a holomorphic function on the normalisation of $X^+$, and is thus constant by compactness of $X^+$. Thus $\overline{s}=\varphi.\overline{w^m}$, and $\kappa=0$, as claimed.
\end{proof}

\begin{example}\label{coisot} Let $X^+$ be a divisor in a connected complex projective variety $M$ of dimension $2d=n+1$ equipped with a symplectic two-form $s$ on some of its resolutions. The form $u:=s^{d-1}$ satisfies the non-vanishing condition and defines an everywhere regular rank-one foliation $\cF$ on $X$. We can also, more generally, consider $X^+$ of codimension $r$ and coisotropic in the previous pair $(M,s)$, taking then $u=s^{d-r}$. The coisotropy condition means that $s$ has rank $r$ on $X$. \end{example}

\begin{corollary} Let $X^+\subset M$ be complex projective, irreducible, with $M^{2d}$ equipped with a holomorphic symplectic form $s$ as in Example \ref{coisot} above. Let $w:=s^{d-r}$. If $X=X^{+, reg}$ is coisotropic of codimension $r$, if $codim_{X^+}(X^+-X^{+,reg})\geq 2$, and if the foliation $\cF=Ker(w)$ has compact canonically polarised leaves on $X$ (or compact leaves with trivial canonical bundle), then $f$ is isotrivial.
\end{corollary}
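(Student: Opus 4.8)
The plan is to derive this as a direct application of Proposition~\ref{projective normal}: the entire content of the corollary reduces to checking that the quadruple $(X^+,X,M,w)$ with $w:=s^{d-r}$ fits the setup introduced just before that proposition, after which the proposition supplies the conclusion verbatim. Two of its hypotheses are already granted — the codimension bound $codim_{X^+}(X^+-X)\geq 2$, and the fact that $\cF$ has compact leaves that are canonically polarised (or have trivial canonical bundle) — so all the work is in verifying the properties of $w$ and of the foliation $\cF=Ker(w)$.

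First I would do the pointwise linear algebra. Write $2d=\dim M$, so that $n:=\dim X^+=2d-r$ and $m:=n-r=2(d-r)$. At $x\in X$ the coisotropy condition $T_xX\supset (T_xX)^{\perp_s}$ together with $\dim (T_xX)^{\perp_s}=\dim M-\dim T_xX=r$ forces $s|_{T_xX}$ to have kernel exactly $(T_xX)^{\perp_s}$, of dimension $r$, hence rank exactly $2d-2r=m$. Therefore $w=s^{d-r}$ is an everywhere non-zero holomorphic form of degree $m$ on $X$; it is $d$-closed because $ds=0$ gives $d(s^{d-r})=(d-r)(ds)\wedge s^{d-r-1}=0$; and for $v$ tangent to $X$ one has $\iota_v(s^{d-r})=(d-r)(\iota_v s)\wedge s^{d-r-1}$, which vanishes precisely when $v\in Ker(s|_X)$ since $s|_X$ has constant rank $m$. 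Hence $Ker(w)=Ker(s|_X)=(TX)^{\perp_s}=\cF$ is the characteristic foliation, of rank $r$, and $w$ is basic ($\iota_v w=0$ and $\iota_v\,dw=0$ along $\cF$), so it descends to a nowhere vanishing $m$-form on the smooth locus of the base $B$ of the fibration $f$ associated with $\cF$. This is exactly the situation described before Proposition~\ref{projective normal}.

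Next I would handle the extension of $w$ across the boundary. By Example~\ref{coisot} the form $s$ lives as a holomorphic $2$-form on some resolution $\widetilde M\to M$; restricting along the open immersion $M^{reg}\hookrightarrow\widetilde M$ produces a holomorphic $2$-form on $M^{reg}$, so $\widehat w:=s^{d-r}$ is a holomorphic $m$-form on $M^{reg}$ whose restriction to $X$ is $w$ and which extends holomorphically to $\widetilde M$. When $M$ is singular and no such clean global ambient is available, one uses instead that $w$ is induced in local embeddings of $X^+$ into (resolutions of) $M$, which by the remark in the setup preceding Proposition~\ref{projective normal} already guarantees that $w$ extends to a holomorphic $m$-form $\overline w$ on any desingularisation $\overline X$ of $X^+$. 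With this in place, all hypotheses of Proposition~\ref{projective normal} hold, and the proposition gives that the family of leaves of $\cF$, i.e.\ the fibration $f$, is isotrivial.

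The only point requiring genuine care is the one in the previous paragraph: one must ensure that the ``ambient form'' hypotheses of the setup are met even though $s$ is only given on a resolution of the possibly singular space $M$ — this is precisely what the ``local embeddings'' clause there is for, so no real obstruction appears. The remaining delicate bookkeeping is the identity $Ker(s^{d-r})=Ker(s|_X)$, which relies on $s|_X$ having \emph{constant} rank $m$; this follows from coisotropy but should be stated explicitly rather than left implicit.
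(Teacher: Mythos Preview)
Your proposal is correct and follows exactly the route the paper intends: the corollary is stated in the paper without proof, as an immediate application of Proposition~\ref{projective normal} to the situation of Example~\ref{coisot}, and your verification of the hypotheses (non-vanishing and $d$-closedness of $w=s^{d-r}$, the identity $Ker(w)=Ker(s|_X)$ via the constant-rank argument, and the extension of $w$ to a resolution) simply makes explicit what the paper leaves to the reader.
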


\begin{example} Let $S$ be a $K3$-surface, $C\subset S$ a smooth connected projective curve of genus $g>1$, and $k\geq 2$ an integer. Let $q:S^k\to M:=S^k/S_k$, where $S_k$ is the permutation group acting on the factors be the quotient map. Let $Q:=q\circ j:C\times S^{k-1}\to M$ be the natural composition map, where $j:C\times S^k$ is the injection. Let $X^+:=Q(C\times S^{k-1})\subset M$ be its image. Let $\rho:S^{[k]}\to M$ be the Fogarty resolution by the Hilbert scheme. The preceding result applies to $X^+,X, w=s^{k-1}$, if $s$ is a symplectic form on $S^{[k]}$. Here the isotriviality is obvious by construction, but this shows that examples which satisfy our quite restrictive conditions do exist.\end{example}



\section{Appendix: proof of Theorem \ref{updowndiff}}

\begin{proof} The first claim of Theorem \ref{updowndiff} is proved in \cite{Ca07}, Proposition 2.11, p. 823. We thus check now the second claim.
Notice first that we need to check this claim only over the complement of a codimension $2$ subset $S$ of $Z$, because $E^m_Z$ is locally free.  In particular, we can assume that $h$ has equidimensional fibres over this complement. Finally, the $h$-horizontal part of $D$ (that is, the components of $D$ dominating $Z$) does not play any r\^ole here, since $f^*(E^m_Z)$ is saturated in $E^m_X$ over the locus $Z-S$, $S:=Supp(D_Z)$. Indeed, up to a Zariski-closed subset of codimension at least $2$ in $Z-S$, if $D^+=Supp(D)\subset X$, the fibration $h:(X,D^+)\to Z$ is smooth over $Z-S$ in the logarithmic sense, leading to an exact sequence (over $Z-S)$: $0\to h^*(\Omega^1_Z)\to \Omega^1(X,Log D^+)\to \Omega^1_{X/Z}(D^+)\to 0$ with torsionfree cokernel, implying the same property at the level of tensor powers, and a fortiori for $[T^m ]\Omega^1(X,D)\subset [T^m]\Omega^1(X,D^+)=\otimes^m(\Omega^1_X(Log D^+))$.

We may thus choose local coordinates $(z_1,z')$, $z':=(z_2,...,z_p)$ on $Z$, adapted to $D_Z$, and such that, locally on $Z$, $D_Z$ is supported on $Z_1$, the divisor of $Z$ of equation $z_1=0$, with $D_Z$-coefficient $c'=(1-\frac{1}{m.t})$, and such that for suitable local coordinates $x=(x_1,x'=(x_2,\dots,x_n))$ adapted to $D$ on a generic point of a component $D_1$ of $D$ such that $h^*(Z_1)=t_1.D_1+\dots$ in the local chart of $X$ considered, we have: $h(x)=(z_1=x_1^{t_1}, z_2=x_2,...,z_p=x_p)$. By the definition of the orbifold base of $h$, we also have: 

1. For some component $D'$ of $h^{-1}(Z_1)$, if $c=(1-\frac{1}{m})$ is the coefficient of $D'$ in $D$, and if $h^*(Z_1)=t.D'+\dots$, we have: the coefficient $c'$ of $Z_1$ in $D_Z$ is $c':=(1-\frac{1}{m.t})$, introduced above. Moreover:

2. $m''.t''\geq m.t$ for any other component $D''$ of $h^{-1}(Z_1)$, if $m'',t''$ are defined as for $D'$. This inequality holds in particular for $D_1, m_1,t_1$, with $m_1,t_1$ being the above invariants $m'',t''$ when $D'':=D_1$. 

Let now $w:=\frac{dz_1^{\otimes K}}{z_1^{[k.c']}}\otimes (dz')^{\otimes (m-K)}$ be any one of the generators of $[T^m](Z,D_Z)$, for some $0\leq k\leq m$. Here $K\subset \{1,\dots,m\}$ is a subset of cardinality $0\leq k\leq m$, $m-K$ its complement there, and $dz_1^{\otimes K}\otimes (dz')^{\otimes (m-K)}$ means the tensor product $dz_{j_1}\otimes\dots \otimes dz_{j_m}$, where $j_h=1$ if and only if $h\in K$, while $j_h\in \{2,\dots,n\}$ otherwise.

Computing, we get (up to a nonzero constant): $$h^*(w)=x_1^{t_1.(\lceil \frac{k}{m.t}\rceil)}.(\frac{dx_1}{x_1})^{\otimes k}.(dx')^{\otimes(m-k)}.$$

But $[T^m](X,D)$ contains the $\cO_X$-module $W_X$ generated by: $$w_X:=\frac{dx_1^{\otimes k}}{x_1^{[k.c_1)]}}.(dx')^{\otimes (m-k)}=x_1^{\lceil \frac{k}{m_1}\rceil}.(\frac{dx_1}{x_1})^{\otimes k}.(dx')^{\otimes(m-k)}.$$

The argument now mainly relies on the following elementary lemma, where $\lceil x\rceil,x\in \Bbb R$, denotes the `round-up' of $x$, that is the smallest integer greater or equal to $x$. One also has: $\lceil x\rceil=-[-x]$, where $[x]$ is the usual integral part:

\begin{lemma}\label{luddiff}  Let $t>0$ be an integer, and $x\in \Bbb R$. Then: 

(1) $t. \lceil\frac{x}{t}\rceil-\lceil x\rceil\in \{0,1,\dots,(t-1)\}$.

Let $t,t',m,m',x$ be positive real numbers, with $t,t'$ integers. Then:

(2) $N:=t'.\lceil\frac{x}{m.t}\rceil-\lceil \frac{x}{m'}\rceil\geq 0$ if $m'.t'\geq m.t,$ and:

(2') $N\in \{0,\dots,(t'-1)\}$ if $m.t=m'.t'$.

\end{lemma}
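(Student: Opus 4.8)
The plan is to prove Lemma \ref{luddiff} by elementary manipulations with the ceiling function, treating the three assertions in turn and reducing (2) and (2$'$) to (1) wherever possible.

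First I would prove (1). Write $x = tq + \rho$ with $q = \lceil x/t\rceil$, so that $\rho = x - tq \in (-t, 0]$; more precisely $x/t \le q < x/t + 1$ gives $-t < \rho \le 0$. Then $\lceil x\rceil = \lceil tq + \rho\rceil = tq + \lceil \rho\rceil$, and since $\rho \in (-t,0]$ we have $\lceil \rho\rceil \in \{-(t-1), \dots, 0\}$, whence $t\lceil x/t\rceil - \lceil x\rceil = -\lceil\rho\rceil \in \{0,1,\dots,t-1\}$. This is the clean statement; the only thing to be careful about is the boundary case $\rho = 0$, which gives the value $0$, consistent with the claimed range.

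Next, for (2) and (2$'$), the point is to compare $t'\lceil x/(mt)\rceil$ with $\lceil x/m'\rceil$ under the hypothesis $m't' \ge mt$ (resp.\ $m't' = mt$). The natural route is to first apply monotonicity: since $m't' \ge mt$ we have $x/(m't') \le x/(mt)$, hence $\lceil x/(m't')\rceil \le \lceil x/(mt)\rceil$, so $t'\lceil x/(mt)\rceil \ge t'\lceil x/(m't')\rceil$. Now invoke (1) with the integer $t'$ and the real number $x/m'$: this gives $t'\lceil (x/m')/t'\rceil - \lceil x/m'\rceil \in \{0,\dots,t'-1\}$, i.e. $t'\lceil x/(m't')\rceil - \lceil x/m'\rceil \in \{0,\dots,t'-1\}$, which is $\ge 0$. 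Chaining the two inequalities yields $N = t'\lceil x/(mt)\rceil - \lceil x/m'\rceil \ge 0$, proving (2). For (2$'$), when $m't' = mt$ the monotonicity step is an equality, $\lceil x/(mt)\rceil = \lceil x/(m't')\rceil$, so $N$ equals exactly the quantity $t'\lceil x/(m't')\rceil - \lceil x/m'\rceil$ controlled by (1), hence lies in $\{0,\dots,t'-1\}$.

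The main obstacle, such as it is, is bookkeeping rather than conceptual: one must keep straight which quantities are required to be integers (only $t$ and $t'$) versus merely positive reals ($m, m', x$), and make sure the application of (1) is legitimate — in (2)/(2$'$) we apply (1) with ``$t$'' $= t'$ (an integer, good) and ``$x$'' $= x/m'$ (an arbitrary positive real, which is allowed since (1) is stated for $x \in \mathbb{R}$). I would also double-check the degenerate cases (e.g. $x$ a multiple of the relevant quantities, or $t' = 1$) to confirm the stated ranges are sharp and not off by one. No deeper input is needed; the whole lemma is a packaging of the division-with-remainder identity for $\lceil\cdot\rceil$.
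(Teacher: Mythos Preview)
Your proof is correct and follows essentially the same approach as the paper: both arguments rest on writing $\lceil y\rceil = y + \vartheta$ with $\vartheta\in[0,1)$ (equivalently, your remainder $\rho$) and then bounding the resulting integer difference. The only cosmetic difference is that for (2) the paper repeats the direct fractional-part computation, whereas you reduce (2) to (1) via the monotonicity $\lceil x/(m't')\rceil \le \lceil x/(mt)\rceil$; for (2$'$) the two proofs coincide exactly.
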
\label{roundup}
\begin{proof} Claim (1): $ \lceil \frac{x}{t}\rceil= \frac{x}{t}+ \vartheta, \vartheta\in [0,1[$, thus $t.\lceil \frac{x}{t}\rceil=x+t.\vartheta$. 
Also: $\lceil x\rceil=x+\vartheta', \vartheta'\in [0,1[$. 
Thus: $t.\lceil \frac{x}{t}\rceil-\lceil x\rceil=t.\vartheta-\vartheta'\in ]t,-1[$ being an integer, we get the first claim.

Claim (2): $ t'.\lceil \frac{x}{m.t}\rceil= t'.\frac{x}{m.t}+ \vartheta= \frac{m'.t'}{m.t}.\frac{x}{m'}+ \vartheta, \vartheta\in [0,1[$. Moreover: $ \lceil \frac{x}{m'}\rceil= \frac{x}{m'}+ \vartheta', \vartheta'\in [0,1[$. Since $N:=t'.\lceil \frac{x}{m.t}\rceil-\lceil \frac{x}{m'}\rceil=(\frac{m'.t'}{m.t}-1). \frac{x}{m'}+t'.\vartheta-\vartheta'\geq t'.\vartheta-\vartheta'>-1$ is an integer, it is non-negative, as asserted.

Claim (2') follows from Claim (1), applied to $t',x':=\frac{x}{m'}$, in place of $t,x$, since: $\frac{x}{m.t}=\frac{x}{m'.t'}=\frac{x'}{t'}$.
\end{proof}

From Lemma \ref{roundup}, and since $m_1.t_1\geq m.t$, we get that $h^*(w)\in W_X$, and that $h^*(w)=x_1^{\tau}.w_X$, with $\tau \in \{0,\dots,(t_1-1)\}$ if $m_1.t_1=m.t$.

The support of $h_*(h^*(E^m_Z)^{sat})/E^m_Z$ must then have support of codimension one contained in $D_Z$. Assume that $Z_1$ for example is contained in this support. Then $h_*(h^*(E^m_Z)^{sat})_{\vert Z_1}\subset E^m_Z(k.Z_1)$ for some minimal integer $k\geq 0$. We will show that $k=0$, implying the claim. Assume $k\geq 1$, then $h^*(E^m_Z)$ vanishes at order $\tau\geq t_1$ on the component $D'$ of $h^{-1}(Z_1)$ introduced above, contradicting the inequality $\tau\leq (t_1-1)$ established in the previous lines. \end{proof}


\end{document}